\global\long\def\argmin{\operatorname*{argmin}}%
\definecolor{green}{rgb}{0,0.6,0.0}
\newtheorem{thm}{Theorem}[section]
\newtheorem{theorem}[thm]{Theorem}
\newtheorem{lemma}[thm]{Lemma}
\newtheorem{proposition}[thm]{Proposition}
\newcommand{\beq}{\begin{equation}}
\newcommand{\eeq}{\end{equation}}
\newcommand{\beqa}{\begin{eqnarray}}
\newcommand{\eeqa}{\end{eqnarray}}
\newcommand{\beqas}{\begin{eqnarray*}}
\newcommand{\eeqas}{\end{eqnarray*}}
\newcommand{\ei}{\end{itemize}}
\newcommand{\vgap}{\vspace{.1in}}
\newcommand{\R}{\mathbb{R}}
\newcommand{\lam}{{\lambda}}
\newcommand{\inner}[2]{\langle #1,#2\rangle}
\newcommand{\dom}{\mathrm{dom}\,}
\newcommand{\tx}{\tilde x}
\newtheorem{remar}{Remark}[section]
\global\long\def\tx{\tilde{x}}%
\global\long\def\rn{\Re^{n}}%
\global\long\def\cal{\mathcal}%
\global\long\def\R{\Re}%
\global\long\def\r{\Re}%
\global\long\def\lam{\lambda}%
\global\long\def\argmin{\operatorname*{argmin}}%
\global\long\def\dom{\operatorname*{dom}}%
\global\long\def\inner#1#2{\langle#1,#2\rangle}%
\global\long\def\cConv{\overline{{\rm Conv}}\ }%
\normalfont\fontsize{14}{15}\bfseries}{\thesection}{1em}{}
\normalfont\fontsize{12}{15}\bfseries}{\thesubsection}{1em}{}
\title{Efficient parameter-free restarted accelerated gradient methods for convex and strongly convex optimization}
\author{Arnesh Sujanani  \thanks{Department of Combinatorics and Optimization, University of Waterloo, Waterloo, ON, N2L 3G1. (Email: {\tt a3sujana@uwaterloo.ca}). This work was primarily done while this author was a PhD student at Georgia Institute of Technology. While at Georgia Institute of Technology, he was partially supported by AFORS Grant FA9550-22-1-0088.} \and Renato D.C. Monteiro \thanks{Stewart School of Industrial and Systems Engineering, Georgia Institute of Technology, Atlanta, GA, 30332-0205. (Email: {\tt monteiro@isye.gatech.edu}). This author was partially supported by AFORS Grant FA9550-22-1-0088.} }
\date{October 5, 2024 (revised: October 11, 2024)
}
\begin{document}
\maketitle
\begin{abstract}
This paper develops a new parameter-free restarted method, namely RPF-SFISTA, and a new parameter-free aggressive regularization method, namely A-REG, for solving strongly convex and convex composite optimization problems, respectively. RPF-SFISTA has the major advantage that it requires no knowledge of both the strong convexity parameter of the entire composite objective and the Lipschitz constant of the gradient. Unlike several other restarted first-order methods which restart an accelerated composite gradient (ACG) method after a predetermined number of ACG iterations have been performed, RPF-SFISTA checks a key inequality at each of iterations to determine when to restart. Extensive computational experiments show that RPF-SFISTA is roughly 3 to 15 times faster than other state-of-the-art restarted methods on four important classes of problems. The A-REG method, developed for convex composite optimization, solves each of its strongly convex regularized subproblems according to a stationarity criterion by using the RPF-SFISTA method with a possibly aggressive choice of initial strong convexity estimate. This scheme is thus more aggressive than several other regularization methods which solve their subproblems by running a standard ACG method for a predetermined number of iterations. 
\end{abstract}
\section{Introduction}\label{sec:intro}
This paper presents
present new restarted and aggressive parameter-free first-order methods
for solving
the composite optimization  problem
\begin{equation} \label{OptProb1}
\phi_{*}:=\min \{\phi(z) :=  f(z) + h(z) \},
\end{equation}
where $\phi$ is a function that is assumed to be either convex or $\bar \mu$ strongly convex, $h:\Re^n \to (-\infty,\infty]$ is a closed proper convex function, and
$f:\Re^n \to \Re$ is a real-valued differentiable
convex function whose
gradient is $\bar L$--Lipschitz continuous. 

The main focus of this paper is to propose a computationally efficient restarted parameter-free method, namely RPF-SFISTA, for solving strongly convex composite optimization (SCCO) problems. At each iteration, RPF-SFISTA proceeds as follows: it first calls a strongly convex accelerated composite gradient method (S-ACG) developed in \cite{SujananiMonteiro} with an (usually aggressive) estimate $\mu$ of the strong convexity parameter $\bar \mu$ until either a desired stationary point is obtained or a certain key inequality is violated; in the latter case S-ACG is invoked again with strong convexity estimate set to $\mu/2$ and initial point set to the best point obtained in the previous S-ACG call. Using this scheme, RPF-SFISTA restarts a logarithmic number of times and it achieves an iteration complexity of $\tilde{\mathcal O}\left(\sqrt{\bar L/\bar \mu}\right)$. RPF-SFISTA's superior numerical performance is also displayed through extensive computational experiments.

This paper also proposes a dynamic aggressive regularization method, namely A-REG, for solving convex composite optimization (CCO) problems. A-REG 
solves a sequence of strongly convex regularized subproblems using RPF-SFISTA with an aggressive choice of strong convexity estimate. Under the assumption of bounded sublevel sets, A-REG achieves a complexity of $\tilde{\mathcal O}\left(\sqrt{\bar L/\epsilon}\right)$, which is optimal up to logarithmic terms.

{\bf Literature review.} We divide our discussion here into methods that were designed to solve SCCO and CCO problems, respectively. 

\textit{Methods for SCCO}: 
Methods that restart accelerated composite gradient (ACG) methods for solving SCCO problems have been proposed as early as 2008 in \cite{LanMRest} (see also \cite{LanRen2013PenMet} for the published version) while methods that restart strongly convex variants of accelerated composite gradient (S-ACG) methods have been proposed as early as 2013 in \cite{nesterov2012gradient}. Table \ref{ACG Methods Comparison} (resp. Table~\ref{S-ACG Restarted Methods}) below summarizes the differences between the existing restarted ACG (resp. S-ACG) methods.

We now briefly comment on each of the columns in both tables. The ``Universality'' column describes which parameters each method is universal with respect to. Methods that consider the setting where $f$ is $\bar \mu_f$-strongly convex (resp. $\bar L$-smooth) and require no knowledge of $\bar \mu_f$ (resp. $\bar L$) are said to be $\bar \mu_f$-universal (resp. $\bar L$-universal). 
Likewise, methods that consider the setting where the entire composite function $\phi$ is 
assumed to be $\bar \mu$-strongly convex and 
that require no knowledge of $\bar \mu$ are said to be $\bar \mu$-universal. If an entry is marked with $*$, then it means that the method is not universal with respect to that 
parameter. The ``Composite Objective'' column displays whether the method considers a  
general composite objective as in \eqref{OptProb1} or the special case where 
$h$ is the indicator function of a closed convex set (i.e., $h=\delta_{C})$. The ``Stationarity'' column indicates whether the 
method terminates according to a checkable stationarity termination criterion or 
according to a condition that involves the optimal value of \eqref{OptProb1} (which is 
usually unknown). The ``Convergence Proof'' column presents whether a method has any convergence guarantees or not. Finally, the 
``Restart Condition'' column indicates whether the ACG (or S-ACG) restarts when a checkable condition is satisfied at some iteration or after a predetermined number of iterations is performed.

It has been observed in practice that restarted S-ACG methods tend to perform much better than restarted ACG methods. Also, methods that restart based on a checkable condition tend to have better computational performance than methods that restart based on a predetermined number of iterations.
\begin{table}[H]
\captionsetup{font=scriptsize}
\begin{centering}
\begin{tabular}{ccccccc}
\toprule 
\textbf{\tiny{}Name} & \textbf{\tiny{}Universality} & \textbf{\tiny{}Composite Objective} & \textbf{\tiny{}Stationarity} & \textbf{\tiny{}Convergence Proof}  & \textbf{\tiny{}Restart Condition} \tabularnewline

\midrule 

{\scriptsize{\cite{LanRen2013PenMet, LanMRest}}} & {\scriptsize{}$(*,*)$} & {\scriptsize{}No ($h=\delta_{C}$)} & {\scriptsize{}No} & {\scriptsize{}Yes} & {\scriptsize{}Predetermined}\tabularnewline

{\scriptsize{Sync||FOM (2022) \cite{Renegar}}} & {\scriptsize{}$(\bar \mu_f,\bar L)$} & {\scriptsize{}No ($h=\delta_{C}$)} & {\scriptsize{}No} & {\scriptsize{}Yes} & {\scriptsize{}Checkable}\tabularnewline

{\scriptsize{\cite{Alamo1, Alamo2, Alamo3, Fercoq, Aujol, Roulet}}} & {\scriptsize{}$(\bar \mu, *)$} & {\scriptsize{}Yes} & {\scriptsize{}Yes} & {\scriptsize{}Yes} & {\scriptsize{}Predetermined}\tabularnewline

{\scriptsize{}Free-FISTA (2023) \cite{Aujol2}} & {\scriptsize{$(\bar \mu,\bar L)$}} & {\scriptsize{}Yes} & {\scriptsize{}Yes} & {\scriptsize{}Yes} & {\scriptsize{}Predetermined} \tabularnewline

\bottomrule
\end{tabular}
\par\end{centering}
\caption{\scriptsize{Comparison of restarted ACG methods for SCCO.} }
\label{ACG Methods Comparison}
\end{table}


 
This paragraph solely focuses on describing strongly convex variants of FISTA or S-ACG methods that have been previously developed. Methods for SCCO problems that use S-ACG and that are parameter-dependent and require knowledge of the strong convexity parameter underlying the objective function have been proposed in the following works \cite{Monteiro2016, YheMoneiroNash, ChambollePock, Necorora, Chambolle, florea2018accelerated}. Universal restarted methods, including the RPF-SFISTA method in this paper, have also been proposed (see also \cite{LinQihang, GreedyFISTA, OptPar, nesterov2012gradient}). Table \ref{S-ACG Restarted Methods} below highlights the differences between each of the restarted S-ACG methods.
\begin{table}[H]
\captionsetup{font=scriptsize}
\begin{centering}
\begin{tabular}{cccccccc}
\toprule 
\textbf{\tiny{}Name} & \textbf{\tiny{}Universality} & \textbf{\tiny{}Composite Objective} & \textbf{\tiny{}Stationarity}  & \textbf{\tiny{}Convergence Proof} & \textbf{\tiny{}Restart Condition} \tabularnewline

\midrule

{\scriptsize{\cite{nesterov2012gradient, LinQihang}}} & {\scriptsize{}$(\bar \mu_f,\bar L)$} & {\scriptsize{}Yes} & {\scriptsize{}Yes} & {\scriptsize{}Yes} & {\scriptsize{}Checkable} \tabularnewline

{\scriptsize{GR-FISTA (2022) \cite{GreedyFISTA}}} & {\scriptsize{}$(\bar \mu_f,*)$} & {\scriptsize{}Yes} & {\scriptsize{}No} & {\scriptsize{}No} & {\scriptsize{}Checkable} \tabularnewline

{\scriptsize{}SCAR (2023) \cite{OptPar}} &{\scriptsize{}$(\bar \mu_f,\bar L)$} & {\scriptsize{}No} & {\scriptsize{}Yes} & {\scriptsize{}Yes} & {\scriptsize{}Checkable} \tabularnewline

\textbf{\scriptsize{}RPF-SFISTA
}
& {\scriptsize{}$(\bar \mu,\bar L)$} & {\scriptsize{}Yes} & {\scriptsize{}Yes} & {\scriptsize{}Yes} & {\scriptsize{}Checkable} \tabularnewline
\bottomrule
\end{tabular}
\par\end{centering}
\caption{\scriptsize{Comparison of RPF-SFISTA with other restarted S-ACG methods for SCCO.} }
\label{S-ACG Restarted Methods}
\end{table}

As seen from Table \ref{S-ACG Restarted Methods}, RPF-SFISTA is the only parameter-free $\bar \mu$-universal method that restarts a S-ACG method based on a checkable condition. It is observed in practice that methods that restart a S-ACG method based on a checkable condition tend to perform better in practice. We display through extensive computational experiments that RPF-SFISTA is roughly three to fifteen times faster than other state-of-the-art restarted methods on four important classes of problems. It is also worth mentioning that $\bar \mu$ can be much larger than $\bar \mu_f+\bar \mu_h$.  Hence, methods that are $\bar \mu$-universal have the advantage that their complexities tend to be better than those of $\bar \mu_f$ and $\bar \mu_h$-universal methods and nonuniversal methods which depend on $\bar \mu_f+\bar \mu_h$.

\textit{Methods for CCO}: Many direct methods based on proximal gradient, ACG, or FISTA methods have been developed to solve CCO problems (see \cite{nesterov2012gradient, nesterov1983method, beck2009fast, Scheinberg, Latafat, Malitsky, Li, AvgCurv, ACFISTA}). Heuristic restart schemes that restart FISTA and that achieve good practical performance were also further proposed by O'Donoghue and Candès in \cite{Candes}. Convergence proofs of their generic heuristic schemes have been provided only in very special cases \cite{Moursi}.

In another line of research, regularization methods for solving CCO problems have been proposed as early as 2012 by Nesterov in \cite{NesterovNEWs}. In the unconstrained setting, Nesterov proposed a static regularization method where he applied a S-ACG method a single time to the regularized objective $\phi(z)+\delta\|z-z_0\|^2$. His method achieves a complexity of $\mathcal O\left({\sqrt{\bar L/\epsilon}}\right)$ for finding an $\epsilon$-stationary point, a complexity which has not yet been established by a direct method. Similar regularization schemes that are dynamic and that consider the more general composite setting have been proposed in \cite{CatalystAgain, CatalystNC, WJRproxmet1, Aaronetal2017}. Roughly speaking, dynamic regularization methods solve a sequence of regularized subproblems of the form $\phi(z)+\delta_k\|z-z_k\|^2$. If the solution $z_{k+1}$ of the $k$-th subproblem is not optimal for \eqref{OptProb1}, the regularization factor $\delta_{k}$ is halved, the next prox center is updated to be $z_{k+1}$, and S-ACG is invoked again to solve the new subproblem. More recently, dynamic parameter-free regularization methods, including the A-REG method proposed in this paper and the methods in \cite{CurvatureFree, OptPar}, have also been proposed for solving CCO problems. Table~\ref{tab:summary_tbl} compares the features of A-REG with other regularization methods  for CCO. 

We briefly describe each of its columns. The first three columns describe the same features as the first three columns of Table \ref{ACG Methods Comparison}, whose formal descriptions were given previously. The fourth column presents whether a method solves its strongly convex subproblems according to a stationarity condition that the ACG variant, which the method uses, checks at each of its iterations or whether the method solves each subproblem by running a predetermined number of ACG iterations. The fifth column displays whether a method uses a restarted S-ACG method with an usually aggressive strong convexity estimate or a standard S-ACG method to solve its regularized subproblems. The last column presents whether a method is a static regularization method or a dynamic one that adaptively updates its prox-center and regularization parameter.

\begin{table}[H]
\captionsetup{font=scriptsize}
\begin{centering}
\begin{tabular}{cccccccc}
\toprule 
\textbf{\tiny{}Name} & \textbf{\tiny{}Universality} & \textbf{\tiny{}Composite} & {\tiny{\textbf{Stationarity}}}  & \textbf{\tiny{}Checkable Condition} & \textbf{\tiny{}Restart} & \textbf{\tiny{Dynamic}}\tabularnewline

\textbf{\tiny{}} & \textbf{\tiny{of Method}} & \textbf{\tiny{Objective}} & {\tiny{\textbf{Termination}}} & \textbf{\tiny{}for Subproblem} & \textbf{\tiny{S-ACG}} & \textbf{\tiny{Regularization}}\tabularnewline
\midrule 

{\scriptsize{}Nesterov (2012) \cite{NesterovNEWs}} & {\scriptsize{}*} & {\scriptsize{}No} & {\scriptsize{}Yes} & {\scriptsize{}Yes} & {\scriptsize{}No} & {\scriptsize{}No}\tabularnewline

{\scriptsize{}Catalyst (2015) \cite{CatalystAgain}} & {\scriptsize{}*} & {\scriptsize{}Yes} & {\scriptsize{}No} & {\scriptsize{}No} & {\scriptsize{}No} & {\scriptsize{}Yes}\tabularnewline

{\scriptsize{}\cite{WJRproxmet1, Aaronetal2017}} & {\scriptsize{}*} & {\scriptsize{Yes}} & {\scriptsize{}Yes} & {\scriptsize{}Yes} & {\scriptsize{}No} & {\scriptsize{}Yes}\tabularnewline

{\scriptsize{}4WD-Catalyst (2018) \cite{CatalystNC}} & {\scriptsize{}*} & {\scriptsize{Yes}} & {\scriptsize{}Yes} & {\scriptsize{}No} & {\scriptsize{}No} & {\scriptsize{}Yes}\tabularnewline

{\scriptsize{}APD Method (2022) \cite{CurvatureFree}} & {\scriptsize{}$\bar L$} & {\scriptsize{Yes}} & {\scriptsize{}Yes} & {\scriptsize{}Yes} & {\scriptsize{}No} & {\scriptsize{}Yes}\tabularnewline

{\scriptsize{}AR Method (2023) \cite{OptPar} \tablefootnote{In the general composite setting, the AR method in \cite{OptPar} is not developed or presented as a parameter-free method and requires knowledge of the Lipschitz constant, $\bar L$. However, in the unconstrained setting the authors develop a parameter-free variant of the AR method, which they say can be extended to the general composite setting.}} & {\scriptsize{}$\bar L$} &{\scriptsize{}Yes} & {\scriptsize{}Yes} & {\scriptsize{}No} & {\scriptsize{}No} & {\scriptsize{}Yes}\tabularnewline

\textbf{\scriptsize{}A-REG
}
& {\scriptsize{}$\bar L$} & {\scriptsize{}Yes} & {\scriptsize{}Yes} & {\scriptsize{}Yes} & {\scriptsize{}Yes} & {\scriptsize{}Yes}\tabularnewline
\bottomrule
\end{tabular}
\par\end{centering}
\caption{\scriptsize{Comparison of A-REG with other regularization methods for CCO.} }
\label{tab:summary_tbl}
\end{table}

As can be seen from Table 3, A-REG is the only method that solves its dynamic regularization subproblems using a restart S-ACG method which makes an adaptive choice of strong convexity parameter.




{\bf Organization of the paper.} Subsection~\ref{subsec:notation} presents basic definitions and notations used throughout this paper. Section~\ref{sec:SCCO} formally describes the RPF-SFISTA method for solving SCCO problems and its main complexity result and analysis. Section~\ref{CCO Optimization} presents the A-REG method for solving CCO problems and its main complexity result and analysis. Section~\ref{sec:numerical} presents extensive computational experiments that display the superior numerical performance of RPF-SFISTA compared to other state-of-the-art restart schemes on four different important classes of composite optimization problems. Finally, Appendix~\ref{SC-FISTA Appendix Proof} is dedicated to proving an important proposition used in the complexity analysis of RPF-SFISTA.

\subsection{Basic Definitions and Notations} \label{subsec:notation}

This subsection presents notation and basic definitions used in this paper.
	
Let $\Re_{+}$  and $\Re_{++}$ denote the
set of nonnegative and positive
real numbers, respectively. We denote by  $\Re^n$  an $n$-dimensional inner product space with inner product and associated norm denoted by $\inner{\cdot}{\cdot}$ and $\|\cdot\|$, respectively. We use $\Re^{l\times n}$ to denote the set of all $l\times n$ matrices and ${\mathbb S}_n^+$ to denote the set of positive semidefinite matrices in $\r^{n\times n}$.  The smallest positive singular value of a nonzero linear operator $Q:\Re^n\to \Re^l$ is denoted by $\nu^+_Q$. For a given closed convex set $Z \subset \Re^n$, its boundary is denoted by $\partial Z$ and the distance of a point $z \in \Re^n$  to $Z$ is denoted by ${\rm dist}(z,Z)$. The indicator function of $Z$, denoted by $\delta_Z$, is defined by $\delta_Z(z)=0$ if $z\in Z$, and $\delta_Z(z)=+\infty$ otherwise.
 For any $t>0$ and $b\geq 0$, we let $\log_b^+(t):=\max\{\log t, b\}$, and we  define ${\cal O}_1(\cdot) = {\cal O}(1 + \cdot)$.

The domain of a function $h :\Re^n\to (-\infty,\infty]$ is the set $\dom h := \{x\in \Re^n : h(x) < +\infty\}$.
Moreover, $h$ is said to be proper if
$\dom h \ne \emptyset$. The set of all lower semi-continuous proper convex functions defined in $\Re^n$ is denoted by $\cConv \rn$. The $\varepsilon$-subdifferential of a proper function $h :\Re^n\to (-\infty,\infty]$ is defined by 
\begin{equation}\label{def:epsSubdiff}
\partial_\varepsilon h(z):=\{u\in \Re^n: h(z')\geq h(z)+\inner{u}{z'-z}-\varepsilon, \quad \forall z' \in \Re^n\}
\end{equation}
for every $z\in \Re^n$.	The classical subdifferential, denoted by $\partial h(\cdot)$,  corresponds to $\partial_0 h(\cdot)$.  
Recall that, for a given $\varepsilon\geq 0$, the $\varepsilon$-normal cone of a closed convex set $C$ at $z\in C$, denoted by  $N^{\varepsilon}_C(z)$, is 
$$N^{\varepsilon}_C(z):=\{\xi \in \Re^n: \inner{\xi}{u-z}\leq \varepsilon,\quad \forall u\in C\}.$$
The normal cone of a closed convex set $C$ at $z\in C$ is denoted by  $N_C(z)=N^0_C(z)$.
If $\phi$ is a real-valued function which
is differentiable at $\bar z \in \Re^n$, then its affine   approximation $\ell_\phi(\cdot,\bar z)$ at $\bar z$ is given by
\begin{equation}\label{eq:defell}
\ell_\phi(z;\bar z) :=  \phi(\bar z) + \inner{\nabla \phi(\bar z)}{z-\bar z} \quad \forall  z \in \Re^n.
\end{equation}

\section{Strongly Convex Composite Optimization (SCCO)} \label{sec:SCCO}

This section presents a restarted parameter-free FISTA variant, namely RPF-SFISTA, for solving strongly convex composite optimization (SCCO) problems. 

Specifically, RPF-SFISTA assumes that problem \eqref{OptProb1} has an optimal solution $z^{*}$ and that
functions $f$, $h$, and $\phi$ satisfy the following
assumptions:
\begin{itemize}
\item[\textbf{(A1)}]
$f: \mathbb E\to \R$ is a differentiable convex function that is $\bar L$-smooth, i.e., there exists $\bar L \geq 0$ such that, for all $z,z' \in \mathbb E$,
\begin{equation}\label{ineq:uppercurvature1}
\|\nabla f(z') -  \nabla f(z)\|\le \bar L \|z'-z\|;
\end{equation}

\item[\textbf{(A2)}]  $h: \mathbb E \to \R\cup\{+\infty\}$ is a possibly nonsmooth convex function with domain denoted by $\mathcal H$;

\item[\textbf{(A3)}] $\phi$ is a $\bar \mu$-strongly convex function, where $\bar \mu>0$. 
\end{itemize}
We now describe the type of approximate solution that RPF-SFISTA aim to find. 

\vgap

Given $\phi$ satisfying the above assumptions and a tolerance parameter $\hat \epsilon>0$,
the goal of RPF-SFISTA is to find a pair $(y,v) \in \mathcal H \times \mathbb E$ such that
\begin{gather}\label{acg problem}
    \|v\|\leq \hat \epsilon, \quad v \in \nabla f(y)+\partial h(y).
\end{gather}
Any pair $(y,v)$ satisfying \eqref{acg problem} is said to be an \textbf{$\epsilon$-optimal solution} of \eqref{OptProb1}.

The rest of this section is broken up into three subsections. The first one motivates and states the RPF-SFISTA method and presents its main complexity results. The second and third subsections present the proofs of the main results.

\subsection{The RPF-SFISTA method}\label{sec:SC-FISTA}

This subsection motivates and states the RPF-SFISTA method and presents its main complexity results. 

The RPF-SFISTA is essentially a restarted version of a S-ACG variant (see for example \cite{Monteiro2016}) that also performs backtracking line-search for the smoothness parameter. 
RPF-SFISTA calls the S-ACG variant with an aggressive estimate $\mu$ for the strong convexity parameter $\bar \mu$. A novel condition is then checked at each of the variant's iterations to see if RPF-SFISTA should restart and call the variant again with a smaller estimate $\mu=\mu/2$. Each time RPF-SFISTA restarts, a new cycle of RPF-SFISTA is said to begin. If RPF-SFISTA calls the S-ACG variant with a strong convexity estimate $\mu$ such that $\mu\in (0,\bar \mu]$, then it is shown in Proposition~\ref{prop:nest_complex1} below that the current cycle of RPF-SFISTA must terminate with a pair $(y,v)$ that satisfies \eqref{acg problem} and is thus an $\epsilon$-approximate solution of \eqref{OptProb1}. Hence, RPF-SFISTA performs at most a logarithmic number of restarts/cycles. The formal description of RPF-SFISTA algorithm is now presented.

\noindent\begin{minipage}[t]{1\columnwidth}%
\rule[0.5ex]{1\columnwidth}{1pt}

\noindent \textbf{RPF-SFISTA Method}

\noindent \rule[0.5ex]{1\columnwidth}{1pt}%
\end{minipage}

\noindent \textbf{Universal Parameters}: scalars
$\chi \in (0,1)$ and $\beta>1$.

\noindent \textbf{Inputs}: let scalars $\left(\mu_0,\bar M_0\right)\in \Re_{++}^{2}$, an initial
point $z_0 \in \mathcal H$, a tolerance $\hat \epsilon>0$, and functions $(f,h)$ and $\phi:=f+h$ be given, and set $l=1$.

\noindent \textbf{Output}: a quadruple $(y,v,\xi,L)$.

\begin{itemize}
\item[{\bf 0.}]
set $j=1$, initial point $x_0=z_{l-1}$, estimates $\underbar M_{l}\in [\max\{0.25\bar M_{l-1},\bar M_0\},\bar M_{l-1}]$ and $\mu=\mu_{l-1}$, points $(\xi_0,y_0)=(x_0,x_0)$, and scalars $(A_0,\tau_0,L_0)=(0,1,\underbar M_{l})$;
\item[{\bf 1.}] set $L_{j}=L_{j-1}$;
\item[{\bf 2.}]	compute
		\begin{equation}\label{def:ak-sfista1}
		a_{j-1}=\frac{\tau_{j-1}+\sqrt{\tau_{j-1}^2+4\tau_{j-1} A_{j-1}L_{j}}}{2L_{j}}, \quad \tx_{j-1}=\frac{A_{j-1}y_{j-1}+a_{j-1} x_{j-1}}{A_{j-1}+a_{j-1}},
		\end{equation}
		\begin{equation}
		y_{j}:=\underset{u\in \mathcal H}\argmin\left\lbrace q_{j-1} (u;\tx_{j-1},L_{j}) 
		:= \ell_{f}(u;\tilde x_{j-1}) + h(u) + \frac{L_{j}}{2}\|u-\tx_{j-1}\|^2\right\rbrace;
		\label{eq:ynext-sfista1}
		\end{equation}
		if  the inequality
		\begin{equation}\label{ineq check}
		\ell_{f}(y_{j};\tilde x_{j-1})+\frac{(1-\chi) L_{j}}{4}\|y_{j}-\tilde x_{j-1}\|^2\geq f(y_{j})
		\end{equation}
		holds go to step~3; else set $L_{j} \leftarrow \beta L_{j} $ and repeat step~2;

\item[{\bf 3.}] compute
\begin{align}
{\xi_{j}}&=\begin{cases}\label{def:wj}
   y_{j} & \text{if $\phi(y_{j})\leq \phi(\xi_{j-1})$} \\
   \xi_{j-1}& \text{otherwise},
\end{cases}\\
A_{j}&=A_{j-1}+a_{j-1}, \quad \tau_{j}= \tau_{j-1} + \frac{a_{j-1}\mu}{2},  \label{eq:taunext-sfista1} \\
s_{j}&=L_{j}(\tilde x_{j-1}-y_{j}),\label{eq:sk}\\
\quad x_{j}&= \frac{1}{\tau_{j}} \left[\frac{\mu a_{j-1} y_{j}}{2} + \tau_{j-1} x_{j-1}-a_{j-1}s_{j} \right] , \label{eq:xnext-sfista1}\\
v_{j}&=\nabla f(y_{j})-\nabla f(\tilde x_{j-1})+s_{j};\label{def:uk}
\end{align}

\item[{\bf 4.}]  if the inequality
\begin{align}
&\|\xi_{j}-x_{0}\|^{2} \geq \chi A_{j}L_{j} \|y_{j}-\tilde x_{j-1}\|^2, \label{restart condition}
\end{align}
holds, then  go to step 5; otherwise \textbf{restart}, set $z_{l}=\xi_{j}$, $\bar M_{{l}}=L_{j}$, $\mu_{l}=\mu/2$, and $l \leftarrow l+1$, and go to step 0;
\item[{\bf 5.}] if the inequality
		\begin{equation}\label{u sigma criteria}
		\|v_{j}\| \leq \hat \epsilon
		\end{equation}
		holds then \textbf{stop} and output quadruple $(y,v,\xi,L):=(y_{j},v_{j},\xi_j,L_j)$; otherwise, set
		 $ j \leftarrow j+1 $ and go to step~1.

\end{itemize}
\noindent \rule[0.5ex]{1\columnwidth}{1pt}

Several remarks about RPF-SFISTA are now given. First, RPF-SFISTA is an adaptive and parameter-free method in that it requires no knowledge of the Lipschitz and strong convexity parameters and instead adaptively performs line searches for these constants.
Second, it performs two types of iterations, namely cycles indexed by $l$ and inner ACG/FISTA iterations which are indexed by $j$. The number of restarts RPF-SFISTA performs is equivalent to the number of cycles it performs minus one. Third, steps 2 and 3 of RPF-SFISTA are essentially equivalent to an iteration of a S-ACG variant that performs a line-search for the Lipschitz constant of the gradient. Fourth, step 4 of RPF-SFISTA checks a novel condition \eqref{restart condition} to determine whether it should restart or not. If relation \eqref{restart condition} fails to hold during an iteration of the $l$-th cycle of RPF-SFISTA, RPF-SFISTA restarts and begins the $(l+1)$st cycle with a smaller estimate for the strong convexity parameter $\mu$. Fifth, RPF-SFISTA performs warm-restarting, i.e., when RPF-SFISTA restarts and begins the $(l+1)$st cycle it takes as initial point for this cycle the point with the best function value that was found during the previous cycle. Finally, it will be shown that for any $j\geq 1$, the pair $(y_j,v_j)$ always satisfies the inclusion in \eqref{acg problem}. As a consequence, if RPF-SFISTA stops in step 5, output pair $(y,v)$ is an $\epsilon$-optimal solution of \eqref{OptProb1}.

Before stating the main results of the RPF-SFISTA method, the following quantities are introduced
\begin{equation}\label{D0 def}
C_{\bar \mu}(\cdot):=\frac{8}{\bar \mu}\left[\phi(\cdot)-\phi(z^{*})\right], \quad \kappa:=2\beta/(1-\chi),
\end{equation}
\begin{equation}\label{omega zeta}
\zeta_l:=\bar L+\max\{\underbar M_l,\kappa \bar L\}, \quad Q_l:=  2\sqrt{2} \sqrt{\frac{\max\{\underbar M_l,\kappa {\bar L}\}}{\mu_{l-1}}},
\end{equation}
where $\bar L$ is as in \eqref{ineq:uppercurvature1} and $z^{*}$ is an optimal solution of \eqref{OptProb1}.

The following proposition and theorem state the main complexity results of the RPF-SFISTA method and key properties of its output. The proofs of Proposition~\ref{prop:nest_complex1} and Theorem~\ref{Total Complexity} are given in Appendix~\ref{SC-FISTA Appendix Proof} and Subsection~\ref{Proof of Total Complexity}, respectively.

\begin{proposition}\label{prop:nest_complex1} 
The following statements about the $l$-th cycle of RPF-SFISTA hold:
\begin{itemize}
\item[(a)] 
it stops (in either step 4 or step 5) in at most 
\begin{equation}\label{eq:eq1}
\left\lceil\left(1+Q_l\right)\log^+_1\left(\frac{C_{\bar \mu}(z_{l-1})\zeta_l^2}{\chi \hat \epsilon^{2}}\right)+1\right\rceil+\left\lceil \frac{\log^+_0(2\bar L/((1-\chi)\underbar M_l))}{\log \beta}\right\rceil
\end{equation}
ACG iterations/resolvent evaluations 
where $\chi$ and $\beta$ are input parameters to RPF-SFISTA, $\hat \epsilon$ is the input tolerance, $\bar L$ is as in \eqref{ineq:uppercurvature1}, $C_{\bar \mu}(\cdot)$ and $\kappa$ are as in \eqref{D0 def}, and $Q_l$ and $\zeta_l$ are as in \eqref{omega zeta};
\item[(b)] if the cycle terminates in its step 5, then it outputs a quadruple $(y,v,\xi,L)$ that satisfies 
\begin{equation}\label{Func Value Decrease}
    \phi(\xi)\leq \min\left\{\phi(z_0),\phi(y)\right\}, \quad \bar M_0\leq L \leq  \max\left\{\bar M_0,\kappa \bar L\right\}
\end{equation}
and such that $(y,v)$ is an $\hat \epsilon$-optimal solution of \eqref{OptProb1}, where $z_0$ is the initial point and $\bar M_0$ is an input to RPF-SFISTA;
\item[(c)] if $\mu_{l-1} \in (0,\bar \mu]$, then the cycle always stops successfully in step 5 with a quadruple $(y,v,\xi,L)$ that satisfies \eqref{Func Value Decrease} and such that $(y,v)$ is an $\hat \epsilon$-optimal solution of \eqref{OptProb1} in at most \eqref{eq:eq1}
ACG iterations/resolvent evaluations.
\end{itemize}
\end{proposition}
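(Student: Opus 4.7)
My plan is to split the proof into part (b), which amounts to bookkeeping, and the iteration-count parts (a) and (c), which share a strongly-convex ACG potential-function analysis at their core.

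For part (b), I would verify each claim in turn. The bounds on $L$ come from the line search: $L_0=\underbar M_l\geq\bar M_0$ and monotonicity of $L_j$ within a cycle give the lower bound, while the descent lemma for $\bar L$-smooth $f$ forces \eqref{ineq check} whenever $L_j\geq 2\bar L/(1-\chi)$, so the last backtracking step can overshoot this threshold by at most a factor of $\beta$, giving $L\leq\max\{\bar M_0,\kappa\bar L\}$. The chain $\phi(\xi)\leq\min\{\phi(z_0),\phi(y)\}$ follows from \eqref{def:wj}, which yields both $\phi(\xi_j)\leq\phi(\xi_{j-1})$ and $\phi(\xi_j)\leq\phi(y_j)$; combined with the across-cycle monotonicity from setting $z_l=\xi_j$ at restart time, unrolling gives $\phi(\xi)\leq\phi(z_{l-1})\leq\phi(z_0)$. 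The inclusion $v\in\nabla f(y)+\partial h(y)$ is the first-order optimality condition for the prox subproblem \eqref{eq:ynext-sfista1}, which produces $s_j-\nabla f(\tilde x_{j-1})\in\partial h(y_j)$; adding $\nabla f(y_j)$ to both sides and using \eqref{def:uk} gives the inclusion, while the norm bound $\|v\|\leq\hat\epsilon$ is the termination test \eqref{u sigma criteria}.

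For parts (a) and (c), my plan is to first establish a one-step potential recursion of the form
\[
A_j[\phi(y_j)-\phi(z^*)] + \tfrac{\tau_j}{2}\|x_j-z^*\|^2 \leq A_{j-1}[\phi(y_{j-1})-\phi(z^*)] + \tfrac{\tau_{j-1}}{2}\|x_{j-1}-z^*\|^2 + \tfrac{(\mu-\bar\mu)a_{j-1}}{2}\|\tilde x_{j-1}-z^*\|^2,
\]
obtained by combining the prox-optimality of $y_j$ in \eqref{eq:ynext-sfista1}, the line-search inequality \eqref{ineq check}, the update \eqref{eq:xnext-sfista1} of $x_j$, the choice of $a_{j-1}$ in \eqref{def:ak-sfista1}, and $\bar\mu$-strong convexity of $\phi$. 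A companion ingredient is a geometric lower bound $A_j L_j \geq c(1+\tfrac12\sqrt{\mu_{l-1}/\max\{\underbar M_l,\kappa\bar L\}})^{2(j-1)}$ for an absolute constant $c>0$, coming from the identity $a_{j-1}^2 L_j=\tau_{j-1}A_j$ together with the recurrence $\tau_j-\tau_{j-1}=\mu a_{j-1}/2$.

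Part (a) then goes as follows. At any inner iteration $j$ at which the cycle has not yet ended, \eqref{restart condition} has held and $\|v_j\|>\hat\epsilon$. Strong convexity of $\phi$ combined with $\phi(\xi_j)\leq\phi(x_0)$ yields $\|\xi_j-x_0\|^2 \leq 2(\|\xi_j-z^*\|^2+\|z^*-x_0\|^2)\leq(8/\bar\mu)[\phi(x_0)-\phi(z^*)]=C_{\bar\mu}(z_{l-1})$, and plugging into \eqref{restart condition} gives $\|y_j-\tilde x_{j-1}\|^2 \leq C_{\bar\mu}(z_{l-1})/(\chi A_j L_j)$. The triangle inequality together with $\bar L$-smoothness and the bound $L_j\leq\max\{\underbar M_l,\kappa\bar L\}$ from part (b) produces $\|v_j\|\leq(\bar L+L_j)\|y_j-\tilde x_{j-1}\|\leq\zeta_l\|y_j-\tilde x_{j-1}\|$, so $\|v_j\|^2\leq\zeta_l^2 C_{\bar\mu}(z_{l-1})/(\chi A_j L_j)$. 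Non-termination therefore forces $A_j L_j<\zeta_l^2 C_{\bar\mu}(z_{l-1})/(\chi\hat\epsilon^2)$; inverting the geometric growth estimate produces the $\lceil(1+Q_l)\log^+_1(\cdot)+1\rceil$ term in \eqref{eq:eq1}, and the accumulated line-search backtracking, which is monotone in $j$ and capped once $L_j$ reaches $2\bar L/(1-\chi)$, contributes the additive $\lceil\log^+_0(2\bar L/((1-\chi)\underbar M_l))/\log\beta\rceil$.

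For part (c), when $\mu_{l-1}\in(0,\bar\mu]$ the residual term $(\mu-\bar\mu)a_{j-1}\|\tilde x_{j-1}-z^*\|^2/2$ in the recursion is nonpositive, so the potential telescopes to $A_j[\phi(y_j)-\phi(z^*)]+\tfrac{\tau_j}{2}\|x_j-z^*\|^2 \leq \tfrac12\|x_0-z^*\|^2$. My plan is to deduce from this bound, together with \eqref{def:wj} and the update relations of step~3, that \eqref{restart condition} holds at every inner iteration, so no restart ever occurs and the cycle must stop in step~5 within the iteration count from (a). I expect this last implication to be the main technical obstacle: one must bridge from a potential that naturally controls $y_j$ and $x_j$ to a geometric inequality phrased in terms of the best iterate $\xi_j$, most likely by combining the potential bound with the step-3 formula for $x_j$ (which involves both $y_j$ and the scaled prox residual $s_j$) and by using the descent estimate $\phi(\tilde x_{j-1})-\phi(y_j)\geq\tfrac{(1+\chi)L_j}{4}\|y_j-\tilde x_{j-1}\|^2$ obtained from \eqref{ineq check} combined with $q_{j-1}(y_j;\tilde x_{j-1},L_j)\leq q_{j-1}(\tilde x_{j-1};\tilde x_{j-1},L_j)$.
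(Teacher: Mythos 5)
Your part (b) and part (a) arguments match the paper's: the bound $\|\xi_j-x_0\|^2\leq C_{\bar\mu}(z_{l-1})$ from $\bar\mu$-strong convexity plus monotonicity of $\phi(\xi_j)$, the residual bound $\|v_j\|\leq\zeta_l\|y_j-\tilde x_{j-1}\|$, the geometric growth of $A_jL_j$, and the backtracking count are exactly the ingredients the paper uses, so those parts are sound.

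The gap is in part (c), and it is precisely the point the paper flags as its main novelty. Your one-step recursion with residual $\tfrac{(\mu-\bar\mu)a_{j-1}}{2}\|\tilde x_{j-1}-z^*\|^2$ is the recursion one gets when the \emph{smooth} part is strongly convex, so that a quadratic lower bound can be attached to the linearization $\ell_f(\cdot;\tilde x_{j-1})$ at the point where the gradient is evaluated. Here only $\phi=f+h$ is strongly convex, no subgradient of $\phi$ at $\tilde x_{j-1}$ is ever computed, and $f$ itself may have no curvature at all; so the asserted recursion cannot be derived from ``prox-optimality, \eqref{ineq check}, the updates, and $\bar\mu$-strong convexity of $\phi$'' as stated. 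The paper circumvents this by building the minorant $\gamma_j$ in \eqref{def:gamma-sfista}: the prox step gives $s_j\in\partial_{\eta_j}\phi(y_j)$ only up to the linearization error $\eta_j=f(y_j)-\ell_f(y_j;\tilde x_{j-1})$, and an $\varepsilon$-subdifferential lemma for strongly convex functions converts this into a valid lower bound on $\phi$ centered at $y_j$ with the \emph{halved} modulus $\mu/4$ at the cost of doubling $\eta_j$ (this is why the algorithm's updates \eqref{eq:taunext-sfista1} and \eqref{eq:xnext-sfista1} carry the factor $\mu/2$ and are centered at $y_j$); your recursion contains no trace of this inexactness and does not match those updates. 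Moreover, even granting a telescoped potential bound at $x=z^*$, it does not imply the restart inequality \eqref{restart condition}: that inequality compares $\|\xi_j-x_0\|^2$ with $\chi A_jL_j\|y_j-\tilde x_{j-1}\|^2$, and to obtain it one needs the potential inequality at an arbitrary comparison point of $\mathcal H$ — the paper sums it and then sets $x=\xi_{j-1}$, which kills the function-value term — while also \emph{retaining} the accumulated slack $\tfrac{\chi}{2}\sum_i A_iL_i\|y_i-\tilde x_{i-1}\|^2$ produced by the strengthened line-search test \eqref{ineq check}; your telescoped bound has already discarded that sum, and the bridge you sketch via the step-3 formula for $x_j$ and a descent estimate at $\tilde x_{j-1}$ (a point that need not even lie in $\mathcal H$) is not a proof. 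So the overall architecture is reasonable, but the two load-bearing steps of part (c) — a $\phi$-minorant valid under $\bar\mu$-strong convexity of the sum only, and the passage from the potential to \eqref{restart condition} via $x=\xi_{j-1}$ with the retained $\chi$-slack — are missing.
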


The following theorem states the main complexity result of RPF-SFISTA and key properties of its output.

\begin{theorem}\label{Total Complexity}
RPF-SFISTA terminates with a quadruple $(y,v,\xi,L)$ that satisfies \eqref{Func Value Decrease} and such that $(y,v)$ is an $\hat \epsilon$-optimal solution of \eqref{OptProb1} 
in at most
\begin{equation}\label{eq:eq1-3}
\mathcal O_1\left(\left\lceil\log^{+}_1\left(2\mu_0/\bar \mu\right)\right\rceil\left[\sqrt{\frac{\max\left\{\bar M_0,\bar L\right\}}{\min\left\{\mu_0, \bar \mu/2\right\}}}\log^+_1 \left(\frac{(\bar L^2+\bar M_0^2) C_{\bar \mu}(z_0)}{\hat \epsilon^2}\right)+\log_0^{+}(\bar L/\bar M_0)\right] \right)
\end{equation}
ACG iterations/resolvent evaluations
where $\hat \epsilon$, $\mu_0$, and $\bar M_0$ are inputs to RPF-SFISTA, $z_0$ is the initial point, and $\bar \mu$, $C_{\bar \mu}(\cdot)$, and $\bar L$ are as in (B2), \eqref{D0 def}, and \eqref{ineq:uppercurvature1}, respectively.

\end{theorem}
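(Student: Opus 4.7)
The plan is to aggregate Proposition~\ref{prop:nest_complex1} across the cycles of RPF-SFISTA. The argument naturally breaks into three steps: (i) bound the total number of cycles $L$; (ii) show that $\underbar M_l,\bar M_l,\zeta_l,Q_l$ and $C_{\bar\mu}(z_{l-1})$ admit cycle-independent bounds; (iii) sum the per-cycle cost \eqref{eq:eq1} over $l=1,\dots,L$.

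For step (i), I would observe that the restart in step~4 sets $\mu_l=\mu_{l-1}/2$, while Proposition~\ref{prop:nest_complex1}(c) forces termination in step~5 as soon as $\mu_{l-1}\le\bar\mu$. Hence the number of cycles satisfies $L=\mathcal O(\lceil\log^{+}_1(2\mu_0/\bar\mu)\rceil)$; moreover, if $L>1$ then $\mu_{L-2}>\bar\mu$, so $\mu_{L-1}>\bar\mu/2$, yielding the lower bound $\mu_{l-1}\ge\min\{\mu_0,\bar\mu/2\}$ for every $l\le L$.

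For step (ii), the crucial point is that the descent inequality for the $\bar L$-smooth function $f$ makes condition \eqref{ineq check} automatic whenever $L_j\ge 2\bar L/(1-\chi)$, so the line-search in step~2 always satisfies $L_j\le\max\{L_{j-1},\kappa\bar L\}$. Combined with $L_0=\underbar M_l$ inside a cycle and the reset rule $\underbar M_l\in[\max\{0.25\bar M_{l-1},\bar M_0\},\bar M_{l-1}]$ across cycles, a double induction (on $j$ within a cycle, and on $l$) gives
\begin{equation*}
\bar M_0\;\le\;\underbar M_l\;\le\;\bar M_l\;\le\;\max\{\bar M_0,\kappa\bar L\}\qquad\text{for all }l=1,\dots,L.
\end{equation*}
Substituting into \eqref{omega zeta} then yields $\zeta_l=\mathcal O(\max\{\bar M_0,\bar L\})$ and $Q_l=\mathcal O\bigl(\sqrt{\max\{\bar M_0,\bar L\}/\min\{\mu_0,\bar\mu/2\}}\bigr)$. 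Finally, the warm-restart $z_l=\xi_j$ combined with \eqref{def:wj}, which defines $\xi_j$ as the best-function-value iterate, gives $\phi(z_l)\le\phi(z_{l-1})$, so by induction $C_{\bar\mu}(z_{l-1})\le C_{\bar\mu}(z_0)$ for every $l$.

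For step (iii), plugging the uniform bounds into \eqref{eq:eq1} and using $\underbar M_l\ge\bar M_0$ to simplify the line-search term to $\mathcal O(\log_0^+(\bar L/\bar M_0))$ shows that each cycle costs at most
\begin{equation*}
\mathcal O_1\!\left(\sqrt{\tfrac{\max\{\bar M_0,\bar L\}}{\min\{\mu_0,\bar\mu/2\}}}\,\log_1^+\!\Bigl(\tfrac{(\bar L^2+\bar M_0^2)C_{\bar\mu}(z_0)}{\hat\epsilon^2}\Bigr)+\log_0^+(\bar L/\bar M_0)\right),
\end{equation*}
and multiplying by the cycle bound $L$ reproduces \eqref{eq:eq1-3}. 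The output properties in \eqref{Func Value Decrease}, together with the $\hat\epsilon$-optimality of $(y,v)$, follow immediately by applying Proposition~\ref{prop:nest_complex1}(b) to the terminal (successful) cycle. The main obstacle I anticipate is step (ii): since the reset rule allows $\underbar M_l$ to drop as low as $\max\{0.25\bar M_{l-1},\bar M_0\}$, one must carefully verify that the induction still keeps $\bar M_l$ below the ceiling $\max\{\bar M_0,\kappa\bar L\}$ across arbitrarily many restarts, and that the line-search dependence on $\bar L/\bar M_0$ stays confined to the single additive $\log_0^+(\bar L/\bar M_0)$ term inside the bracket rather than accumulating across cycles.
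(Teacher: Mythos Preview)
Your proposal is correct and mirrors the paper's proof essentially step for step: step~(i) is Proposition~\ref{Cycle Complexity}(a), step~(ii) is exactly Lemmas~\ref{Bound on L0} and~\ref{Bounds for Restart} (with the line-search ceiling coming from Lemma~\ref{lem:gamma-sfista0}(b)), and step~(iii) is Proposition~\ref{Cycle Complexity}(b), after which the theorem follows by multiplication. The obstacle you flag in step~(ii) is precisely what Lemma~\ref{Bound on L0} handles via the same double induction you sketch, so there is no gap.
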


A remark about Theorem~\ref{Total Complexity} is now given. If $\mu_0=\Omega\left(\bar \mu\right)$ and $\bar M_0=\Omega\left(\bar L\right)$, it then follows from the above result and the definition of $C_{\bar \mu}(\cdot)$ in \eqref{D0 def} that RPF-SFISTA performs at most 
\[
\mathcal O_1\left(\sqrt{\frac{\bar L}{\bar \mu}}\log^+_1 \left(\frac{\bar L^2}{\bar \mu\hat \epsilon^2}\right) \right)
\]
ACG iterations/resolvent iterations to find a pair $(y,v)$ that is an $\epsilon$-approximate optimal solution of \eqref{OptProb1}.

\subsection{Proof of Theorem~\ref{Total Complexity}}\label{Proof of Total Complexity}

This subsection is dedicated to proving Theorem~\ref{Total Complexity}. The following two lemmas present key properties of the iterates generated during the $l$-th cycle of RPF-SFISTA. The proof of the first lemma below is not given as it closely resembles the proofs of Lemmas A.3 and A.4 in \cite{SujananiMonteiro}.

\begin{lemma}\label{lem:gamma-sfista0}
Let $\kappa$ be as in \eqref{D0 def} and $\zeta_l$ and $Q_l$ be as in \eqref{omega zeta}. For every iteration index $j \geq 1$ generated during the $l$-th cycle of RPF-SFISTA, the following statements hold:
\begin{itemize}
\item[(a)] $\{L_j\}$ is nondecreasing;
\item[(b)] the following relations hold
\begin{align}
&\tau_{j-1} = 1+\frac{\mu A_{j-1}}{2}, \quad \frac{\tau_{j-1}  A_{j}}{ a_{j-1}^2}=L_{j},\label{tauproperty}\\[0.5em]
&\underbar M_l\leq L_{j-1}\leq \max\{\underbar M_l,\kappa \bar L\}\label{upper bound},\\[0.8em]
& v_{j} \in \nabla f(y_{j}) + \partial h(y_{j}), \quad \|v_{j}\|\leq \zeta_l \|y_{j}-\tilde x_{j-1}\|;\label{ubound-1}
\end{align}
\item[(c)] it holds that 
\begin{equation}\label{sumAkbound}
    A_jL_j \geq \max \left\{\frac{j^2}{4} ,  \left(1+Q_l^{-1}\right)^{2(j-1)}
 \right\}.
\end{equation}
\end{itemize}
\end{lemma}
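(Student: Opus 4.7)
The three statements are proved in order, each relying on the recursive definitions in Step 0 and the line search in Step 2.

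Part (a) is immediate since Step 1 initializes $L_j = L_{j-1}$ and the only modification allowed in Step 2 is $L_j \leftarrow \beta L_j$ with $\beta > 1$. For part (b), the identity $\tau_{j-1} = 1 + \mu A_{j-1}/2$ follows by induction on $j$ from $\tau_0 = 1$, $A_0 = 0$, and the recurrences $\tau_j = \tau_{j-1} + a_{j-1}\mu/2$, $A_j = A_{j-1} + a_{j-1}$ in \eqref{eq:taunext-sfista1}. The identity $\tau_{j-1} A_j = L_j a_{j-1}^2$ is a direct rearrangement of the quadratic defining $a_{j-1}$ in \eqref{def:ak-sfista1} after substituting $A_j = A_{j-1}+a_{j-1}$.

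The bounds $\underbar M_l \leq L_{j-1} \leq \max\{\underbar M_l, \kappa \bar L\}$ are established by induction on $j$. The lower bound is immediate from $L_0 = \underbar M_l$ combined with part (a). For the upper bound, the key observation is that whenever the line-search candidate satisfies $L_j \geq 2 \bar L/(1 - \chi)$, the descent lemma for the $\bar L$-smooth convex function $f$ gives $f(y_j) \leq \ell_f(y_j; \tilde x_{j-1}) + (\bar L /2) \|y_j - \tilde x_{j-1}\|^2 \leq \ell_f(y_j; \tilde x_{j-1}) + ((1-\chi)L_j/4) \|y_j - \tilde x_{j-1}\|^2$, so the check \eqref{ineq check} is satisfied automatically; hence the final accepted $L_j$ either equals $L_{j-1}$ (no backtracking) or is bounded by $2\beta \bar L / (1-\chi) = \kappa \bar L$, which together with the inductive hypothesis yields the claimed upper bound. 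The inclusion $v_j \in \nabla f(y_j) + \partial h(y_j)$ comes from the first-order optimality condition for the prox subproblem \eqref{eq:ynext-sfista1}, namely $0 \in \nabla f(\tilde x_{j-1}) + L_j(y_j - \tilde x_{j-1}) + \partial h(y_j)$, which by \eqref{eq:sk} yields $s_j \in \nabla f(\tilde x_{j-1}) + \partial h(y_j)$, so the definition \eqref{def:uk} gives the claim. The norm bound follows by the triangle inequality on \eqref{def:uk} together with the $\bar L$-Lipschitz continuity of $\nabla f$ and the just-proved upper bound on $L_j$.

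Part (c) is the main obstacle; both bounds exploit the identity $L_j a_{j-1}^2 = (1 + \mu A_{j-1}/2) A_j$ from part (b). For $A_j L_j \geq j^2/4$, I retain only $L_j a_{j-1}^2 \geq A_j$, factor $(A_j - A_{j-1})^2 = (\sqrt{A_j} - \sqrt{A_{j-1}})^2(\sqrt{A_j} + \sqrt{A_{j-1}})^2$, and use $(\sqrt{A_j} + \sqrt{A_{j-1}})^2 \leq 4 A_j$ to obtain $\sqrt{L_j}(\sqrt{A_j} - \sqrt{A_{j-1}}) \geq 1/2$; telescoping and invoking $L_j \geq L_{j-1}$ from part (a) gives $\sqrt{A_j L_j} \geq j/2$. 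For the geometric bound, I use the sharper inequality $L_j a_{j-1}^2 \geq (\mu A_{j-1}/2) A_j$, which rearranges to $A_j - A_{j-1} \geq \sqrt{\mu/(2L_j)} \sqrt{A_{j-1} A_j}$. Setting $t := \sqrt{A_j / A_{j-1}}$ and $\rho := \sqrt{\mu/(2 L_j)}$, this becomes $t - 1/t \geq \rho$, i.e., $t^2 - \rho t - 1 \geq 0$, whence $t \geq (\rho + \sqrt{\rho^2 + 4})/2 \geq 1 + \rho/2$. Since $\mu = \mu_{l-1}$ is held fixed during the $l$-th cycle and $L_j \leq \max\{\underbar M_l, \kappa \bar L\}$, direct substitution into the definition \eqref{omega zeta} of $Q_l$ shows $\rho/2 \geq 1/Q_l$, so $A_j/A_{j-1} \geq (1 + Q_l^{-1})^2$. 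Iterating this ratio estimate together with $L_j \geq L_{j-1}$ and the base case $A_1 L_1 = 1$ (read off from \eqref{def:ak-sfista1} using $A_0 = 0$, $\tau_0 = 1$) yields $A_j L_j \geq (1 + Q_l^{-1})^{2(j-1)}$.
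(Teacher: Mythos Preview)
Your proof is correct. The paper itself omits the proof of this lemma, saying only that it ``closely resembles the proofs of Lemmas~A.3 and~A.4 in \cite{SujananiMonteiro}''; your argument supplies exactly the standard details one expects from that reference: the quadratic-formula identity for $a_{j-1}$, the backtracking bound via the descent lemma, the optimality condition of the prox step for the inclusion, and the telescoping/ratio estimates for $A_jL_j$ (including the correct base case $A_1L_1=1$).
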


\begin{lemma}\label{First Lemma}
For every iteration index $j\geq 1$ generated during the $l$-th cycle of RPF-SFISTA, it holds that
$\phi(\xi_{j})\leq \min\{\phi(\xi_{j-1}),\phi(y_{j})\}.$
As a consequence, the following relation holds
\begin{equation}\label{function value decrease XI}
\phi(\xi_{j})\leq \phi(z_{l-1}).
 \end{equation}
\end{lemma}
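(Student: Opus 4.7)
\textbf{Proof plan for Lemma~\ref{First Lemma}.} The lemma has two parts: (i) the per-iteration inequality $\phi(\xi_j)\le \min\{\phi(\xi_{j-1}),\phi(y_j)\}$, and (ii) the consequence $\phi(\xi_j)\le \phi(z_{l-1})$. Both are essentially definitional; no nontrivial analysis of the ACG recursion is needed.

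The plan for part (i) is to just unfold the case split in the update rule~\eqref{def:wj}. If $\phi(y_j)\le\phi(\xi_{j-1})$, the rule sets $\xi_j=y_j$, so $\phi(\xi_j)=\phi(y_j)=\min\{\phi(\xi_{j-1}),\phi(y_j)\}$; if instead $\phi(y_j)>\phi(\xi_{j-1})$, the rule sets $\xi_j=\xi_{j-1}$, so $\phi(\xi_j)=\phi(\xi_{j-1})=\min\{\phi(\xi_{j-1}),\phi(y_j)\}$. In either case the desired inequality (in fact an equality) holds.

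For part (ii), I will do a trivial induction on $j$. Part (i) already gives $\phi(\xi_j)\le\phi(\xi_{j-1})$, so the sequence $\{\phi(\xi_j)\}_{j\ge 0}$ is nonincreasing and hence $\phi(\xi_j)\le\phi(\xi_0)$. Step~0 of RPF-SFISTA initializes $x_0=z_{l-1}$ and $(\xi_0,y_0)=(x_0,x_0)$, so $\xi_0=z_{l-1}$ and therefore $\phi(\xi_0)=\phi(z_{l-1})$, which yields \eqref{function value decrease XI}.

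I do not expect any technical obstacle here: the result is a direct consequence of the definition of $\xi_j$ in \eqref{def:wj} and the initialization specified in step~0 of the algorithm. The only thing to be careful about is the base case of the induction, where one must invoke the specific initialization $\xi_0=x_0=z_{l-1}$ rather than any property of the FISTA-type recursion.
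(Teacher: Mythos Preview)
Your proposal is correct and follows essentially the same approach as the paper: both argue by the two-case split on the update rule~\eqref{def:wj} to get $\phi(\xi_j)\le\min\{\phi(\xi_{j-1}),\phi(y_j)\}$, and then use the monotonicity of $\{\phi(\xi_j)\}$ together with the initialization $\xi_0=x_0=z_{l-1}$ from step~0 to conclude~\eqref{function value decrease XI}. Your observation that the per-iteration inequality is in fact an equality is a harmless sharpening the paper does not bother to note.
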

\begin{proof}
Let $j \geq 1$ be an iteration index generated during the $l$-th cycle of RPF-SFISTA. To see that $\phi(\xi_{j})\leq \min\{\phi(\xi_{j-1}),\phi(y_{j})\}$, consider two possible cases. First, suppose that $\phi(y_{j})\leq \phi(\xi_{j-1})$. It follows from the update rule for $\xi_{j}$ in \eqref{def:wj} that $\xi_{j}=y_{j}$ and hence that $\phi(\xi_{j})=\phi(y_{j})\leq \phi(\xi_{j-1})$. For the other case, suppose that $\phi(y_{j})>\phi(\xi_{j-1})$. Relation \eqref{def:wj} then immediately implies that $\xi_{j}=\xi_{j-1}$ and hence that $\phi(\xi_{j})=\phi(\xi_{j-1})<\phi(y_{j}).$ Combining the two cases proves the inequality holds.

It then follows from this inequality, a simple induction argument, and the fact that $\xi_0=x_0$ that $\phi(\xi_j)\leq \phi(x_0)$. This relation and the fact that $x_0$ is set as $z_{l-1}$ at the beginning of the $l$-th cycle of RPF-SFISTA immediately imply relation \eqref{function value decrease XI}.
\end{proof}

\begin{lemma}\label{Bound on L0}
For any cycle index $l \geq 1$ generated by RPF-SFISTA, the quantity $\underbar M_l$ set in step 0 satisfies
\begin{equation}\label{Bounding Upper Curvature}
\underbar M_l\leq \max\left\{\bar M_0,\kappa \bar L\right\}
\end{equation}
where $\bar M_0>0$ is an input to RPF-SFISTA and $\bar L$ and $\kappa$ are as in \eqref{ineq:uppercurvature1} and \eqref{D0 def}, respectively.
\end{lemma}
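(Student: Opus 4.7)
The plan is to prove the bound by induction on the cycle index $l \ge 1$, using the observation that the upper endpoint of the interval appearing in step~0 is $\bar M_{l-1}$, which in turn equals the final value $L_J$ of the line-search constant from the previous cycle.

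For the base case $l=1$, note that $\bar M_{l-1}=\bar M_0$, so the interval $[\max\{0.25\,\bar M_0,\bar M_0\},\bar M_0]$ collapses to $\{\bar M_0\}$, and thus $\underbar M_1=\bar M_0\le \max\{\bar M_0,\kappa\bar L\}$.

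For the inductive step, assume that $\underbar M_l\le \max\{\bar M_0,\kappa\bar L\}$. From step~0 applied at the beginning of the $(l{+}1)$-st cycle, we have $\underbar M_{l+1}\le \bar M_l$, where, by step~4, $\bar M_l=L_J$ is the terminal value of the line-search constant in the $l$-th cycle. So it suffices to show that $L_J\le \max\{\underbar M_l,\kappa\bar L\}$, and then combine this with the inductive hypothesis to conclude that
\[
\underbar M_{l+1}\le L_J\le \max\{\underbar M_l,\kappa\bar L\}\le \max\{\bar M_0,\kappa\bar L\}.
\]

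The key step is therefore to bound $L_J$ by $\max\{\underbar M_l,\kappa\bar L\}$, which I would do by a secondary induction on the inner iteration index $j$. The base case $L_0=\underbar M_l$ is immediate from step~0. For the inductive step, it is enough to show $L_j\le \max\{L_{j-1},\kappa\bar L\}$ at every inner iteration $j$ of cycle $l$. At the start of iteration $j$, step~1 sets $L_j=L_{j-1}$, and step~2 then doubles $L_j$ by factor $\beta$ until \eqref{ineq check} holds. Since $f$ is $\bar L$-smooth by (A1), the descent lemma gives $f(y_j)\le \ell_f(y_j;\tilde x_{j-1}) + (\bar L/2)\|y_j-\tilde x_{j-1}\|^2$, so \eqref{ineq check} is automatically satisfied once $L_j\ge 2\bar L/(1-\chi)$. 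The line search therefore terminates after at most one further $\beta$-multiplication beyond this threshold, yielding $L_j<\beta\cdot 2\bar L/(1-\chi)=\kappa\bar L$ whenever the line search was triggered, and $L_j=L_{j-1}$ otherwise. In either case $L_j\le\max\{L_{j-1},\kappa\bar L\}$, and chaining this over $j=1,\dots,J$ gives $L_J\le \max\{\underbar M_l,\kappa\bar L\}$, which completes both inductions.

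The main (minor) obstacle is precisely the line-search argument in the last paragraph, since Lemma~\ref{lem:gamma-sfista0}(b) bounds only $L_{j-1}$ rather than the post-line-search value $L_J$ used to define $\bar M_l$; one has to redo the standard line-search termination estimate to extend the bound to $L_J$ itself. Everything else is a direct unwinding of the definitions in step~0 and step~4.
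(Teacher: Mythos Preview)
Your proposal is correct and follows essentially the same route as the paper: induction on the cycle index $l$, using that $\underbar M_{l+1}\le \bar M_l$ from step~0 and that $\bar M_l$ is the terminal line-search value $L_J$ of cycle $l$, which is bounded by $\max\{\underbar M_l,\kappa\bar L\}$. The paper simply invokes \eqref{upper bound} from Lemma~\ref{lem:gamma-sfista0}(b) for that last bound, whereas you correctly note that \eqref{upper bound} is stated for $L_{j-1}$ rather than the post-line-search $L_J$ and therefore supply the standard descent-lemma termination argument yourself; this extra care is a small rigor improvement over the paper's citation, not a different method.
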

\begin{proof}
First, we show that for any cycle index $l\geq 2$ generated by RPF-SFISTA, the following relation
\begin{equation}\label{Key Lipschitz Inequality}
\bar M_0\leq \bar M_{l-1} \leq \max\left\{\underbar M_{l-1},\kappa \bar L\right\}
 \end{equation}
holds. Since $l\geq 2$ is a cycle index generated by RPF-SFISTA, this implies that its $(l-1)$-st cycle terminates in its step 4 and hence $\bar M_{l-1}$ is generated. Both relations in \eqref{Key Lipschitz Inequality} then immediately follow from this observation, the fact that the way $\bar M_{l-1}$ is chosen in step 0 implies that $\bar M_0\leq \underbar M_{l-1}$, the fact that $\bar M_{l-1}$ is set in step 4 of RPF-SFISTA as $L_j$ for some iteration index $j \geq 1$ generated during the $(l-1)$-st cycle, and the first and second inequalities in \eqref{upper bound} with $l=l-1$.

The proof of \eqref{Bounding Upper Curvature} now follows from an induction argument. The result with $l=1$ follows immediately from the fact that $\underbar M_1=\bar M_0$. Suppose now that $l\geq 2$ is a cycle index generated by RPF-SFISTA and that inequality \eqref{Bounding Upper Curvature} holds for $l-1$. It follows from the way $\underbar M_l$ is chosen in step 0 at the beginning of the $l$-th cycle of RPF-SFISTA and the first relation in \eqref{Key Lipschitz Inequality} that $\underbar M_l \leq \bar M_{l-1}$. This relation and the second relation in \eqref{Key Lipschitz Inequality} then imply that
\[\underbar M_{l} \leq \bar M_{l-1} \leq \max\left\{\underbar M_{l-1},\kappa \bar L\right\}\leq \max\left\{\bar M_0,\kappa \bar L\right\}\]
where the last inequality is due to the induction hypothesis. Hence, Lemma~\ref{Bound on L0} holds.
\end{proof}

The following lemma establishes a bound on the quantity $C_{\bar \mu}(z_l)$ in terms of the initial point $z_0$.

\begin{lemma}\label{Bounds for Restart}
    For every cycle index $l \geq 1$ generated by RPF-SFISTA, the following relations hold
\begin{align}
 &\phi(z_{l-1})\leq \phi(z_0)\label{Key Func Bound}\\
 &C_{\bar \mu}(z_{l-1})\leq C_{\bar \mu}(z_0) \label{Key Restart Bound}
\end{align}
where $z_0$ is the initial point of RPF-SFISTA and $C_{\bar \mu}(\cdot)$ is as in \eqref{D0 def}.
\end{lemma}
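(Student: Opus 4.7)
The plan is to prove both inequalities simultaneously by induction on the cycle index $l \geq 1$. For the second relation, observe that the definition \eqref{D0 def} gives
\[
C_{\bar \mu}(z_{l-1}) - C_{\bar \mu}(z_0) = \frac{8}{\bar \mu}\bigl[\phi(z_{l-1}) - \phi(z_0)\bigr],
\]
so $C_{\bar \mu}(\cdot)$ is an increasing affine function of $\phi(\cdot)$, and \eqref{Key Restart Bound} is an immediate consequence of \eqref{Key Func Bound}. Thus I only need to establish \eqref{Key Func Bound}.

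The base case $l=1$ is trivial since then $z_{l-1} = z_0$. For the inductive step, suppose $l \geq 2$ is a cycle index generated by RPF-SFISTA and that \eqref{Key Func Bound} holds with $l$ replaced by $l-1$, i.e. $\phi(z_{l-2}) \leq \phi(z_0)$. Since the $l$-th cycle was generated, the $(l-1)$-st cycle must have terminated in its step 4 (rather than in step 5), and the restart rule in step 4 sets $z_{l-1} = \xi_{j}$ for some iteration index $j \geq 1$ of the $(l-1)$-st cycle. Applying Lemma~\ref{First Lemma} to the $(l-1)$-st cycle (whose initial point is $z_{l-2}$) yields
\[
\phi(z_{l-1}) = \phi(\xi_{j}) \leq \phi(z_{l-2}),
\]
where I have used inequality \eqref{function value decrease XI} from that lemma. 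Chaining this with the induction hypothesis gives $\phi(z_{l-1}) \leq \phi(z_0)$, which completes the inductive step.

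There is no real obstacle here; the argument is essentially a bookkeeping consequence of the warm-restart rule, which ensures each new cycle begins at a point with no worse objective value than the previous cycle's starting point. The only things to be careful about are (i) invoking Lemma~\ref{First Lemma} with the correct reference point $z_{l-2}$ (the initial point of the cycle being examined, not of the current cycle) and (ii) using the restart rule in step 4 to identify $z_{l-1}$ as some $\xi_j$ generated during the $(l-1)$-st cycle so that Lemma~\ref{First Lemma} is applicable.
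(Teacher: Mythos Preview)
Your proof is correct and follows essentially the same approach as the paper: both argue by induction on the cycle index, identify $z_{l-1}$ with the $\xi_j$ stored at the restart in step~4 of the $(l-1)$-st cycle, apply Lemma~\ref{First Lemma} (relation~\eqref{function value decrease XI}) to obtain $\phi(z_{l-1})\le\phi(z_{l-2})$, and then deduce \eqref{Key Restart Bound} from \eqref{Key Func Bound} via the definition of $C_{\bar\mu}(\cdot)$.
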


\begin{proof}
Both relations clearly hold for $l=1$, so let $l \geq 2$ be a cycle index generated by RPF-SFISTA. Since $l\geq 2$ is a cycle index generated by RPF-SFISTA, this implies that its $(l-1)$-st cycle terminates in step 4 and hence $z_{l-1}$ is generated. It follows from this observation, the fact that $z_{l-1}$ is set at the end of step of RPF-SFISTA as $\xi_{j}$ for some iteration index $j\geq 1$ generated during the $(l-1)$-st cycle, and relation \eqref{function value decrease XI} with $l=l-1$ that $\phi(z_{l-1})\leq \phi(z_{l-2})$. This relation and a simple induction argument then immediately imply relation \eqref{Key Func Bound}. Relation \eqref{Key Restart Bound} then follows from relation \eqref{Key Func Bound} and the definition of $C_{\bar \mu}(\cdot)$ in \eqref{D0 def}.
\end{proof}

The following proposition establishes an upper bound on the number of cycles that RPF-SFISTA performs and a bound on the number of iterations each cycle performs.

\begin{proposition}\label{Cycle Complexity}
The following statements about RPF-SFISTA hold:
\begin{itemize}
    \item[(a)] RPF-SFISTA performs at most $\lceil \log^{+}_1\left(2\mu_0/\bar \mu\right) \rceil$ cycles to find a quadruple $(y,v,\xi,L)$ that satisfies relation \eqref{Func Value Decrease} and such that $(y,v)$ is an $\hat \epsilon$-optimal solution of \eqref{OptProb1}. Moreover, for every cycle index $l\geq 1$ generated by RPF-SFISTA, it holds that $\mu_{l-1}\geq \min\left\{\mu_0, \bar \mu/2\right\}$;
    \item[(b)] each cycle of RPF-SFISTA performs at most \[\cal O_1\left(\sqrt{\frac{\max\left\{\bar M_0,\bar L\right\}}{\min\left\{\mu_0, \bar \mu/2\right\}}}\log^+_1 \left(\frac{(\bar L^2+\bar M^2_0) C_{\bar \mu}(z_0)}{\hat \epsilon^2}\right) +\log_0^{+}(\bar L/\bar M_0)\right)\] 
ACG iterations/resolvent evaluations where $\hat \epsilon$ and $\mu_0$ are inputs to RPF-SFISTA, $z_0$ is the initial point, and $\bar \mu$, $\bar L$, and $C_{\bar \mu}(\cdot)$ are as in (B2), \eqref{ineq:uppercurvature1}, and \eqref{D0 def}, respectively.
\end{itemize}
\end{proposition}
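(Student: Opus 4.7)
The plan is to deduce Proposition~\ref{Cycle Complexity} directly from Proposition~\ref{prop:nest_complex1} together with Lemmas~\ref{Bound on L0} and~\ref{Bounds for Restart}; the two parts are essentially independent bookkeeping once the right bounds on $\mu_{l-1}$, $\underbar M_l$, $\zeta_l$, $Q_l$, and $C_{\bar\mu}(z_{l-1})$ are in hand.

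For part (a), I would start from the observation that the strong convexity estimate evolves deterministically across cycles: whenever cycle $l$ restarts in step~4, one sets $\mu_l = \mu_{l-1}/2$, so $\mu_{l-1} = \mu_0 / 2^{l-1}$ for every cycle index $l$ generated. Proposition~\ref{prop:nest_complex1}(c) guarantees that as soon as $\mu_{l-1} \in (0,\bar\mu]$ the current cycle terminates successfully in step~5 with an output quadruple satisfying \eqref{Func Value Decrease} for which $(y,v)$ is an $\hat\epsilon$-optimal solution of \eqref{OptProb1}. Hence it suffices to bound the smallest $l$ for which $\mu_0 / 2^{l-1} \leq \bar\mu$, which gives $l \leq \lceil \log^{+}_1(2\mu_0/\bar\mu) \rceil$. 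For the lower bound on $\mu_{l-1}$: if only one cycle is ever executed, then $\mu_{l-1} = \mu_0 \geq \min\{\mu_0,\bar\mu/2\}$; otherwise, for every cycle $l$ strictly before the last we have $\mu_{l-1} > \bar\mu$ (otherwise that cycle would have been the last), while for the last cycle $L$ we have $\mu_{L-1} = \mu_{L-2}/2 > \bar\mu/2$. In all cases $\mu_{l-1} \geq \min\{\mu_0,\bar\mu/2\}$.

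For part (b), I would take the per-cycle bound \eqref{eq:eq1} from Proposition~\ref{prop:nest_complex1}(a) and feed in three uniform estimates on its ingredients. First, Lemma~\ref{Bound on L0} yields $\underbar M_l \leq \max\{\bar M_0,\kappa\bar L\}$, and the step~0 rule $\underbar M_l \geq \bar M_0$ gives $\underbar M_l \in [\bar M_0, \max\{\bar M_0,\kappa\bar L\}]$; this controls the last term in \eqref{eq:eq1} by $\mathcal O(\log^{+}_0(\bar L/\bar M_0))$. Plugging these bounds into the definitions \eqref{omega zeta} yields
\[
\zeta_l = \mathcal O\bigl(\max\{\bar M_0,\bar L\}\bigr), \qquad Q_l = \mathcal O\!\left(\sqrt{\max\{\bar M_0,\bar L\}\,/\min\{\mu_0,\bar\mu/2\}}\right),
\]
where the $Q_l$ bound uses the lower bound on $\mu_{l-1}$ from part (a). Second, Lemma~\ref{Bounds for Restart} gives $C_{\bar\mu}(z_{l-1}) \leq C_{\bar\mu}(z_0)$, so the logarithmic factor in \eqref{eq:eq1} is bounded by $\log^{+}_1((\bar L^2+\bar M_0^2)\,C_{\bar\mu}(z_0)/\hat\epsilon^2)$ after absorbing $\zeta_l^2 = \mathcal O(\bar L^2 + \bar M_0^2)$. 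Combining the $(1+Q_l)$ prefactor with this logarithm and adding the line-search term produces exactly the stated bound in (b).

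I do not anticipate any real obstacle here: part (a) is a geometric-decay argument driven by Proposition~\ref{prop:nest_complex1}(c), and part (b) is a substitution exercise. The only thing that requires a bit of care is tracking that the ``per-cycle'' bound in Proposition~\ref{prop:nest_complex1}(a) depends on $l$ only through $\underbar M_l$, $\mu_{l-1}$, and $C_{\bar\mu}(z_{l-1})$, each of which has a uniform upper (or, for $\mu_{l-1}$, lower) bound over all cycles, so the same $\mathcal O_1(\cdot)$ estimate applies to every cycle simultaneously.
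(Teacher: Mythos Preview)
Your proposal is correct and follows essentially the same approach as the paper: part (a) is derived from Proposition~\ref{prop:nest_complex1}(c) together with the halving update $\mu_l=\mu_{l-1}/2$, and part (b) is obtained by plugging the uniform bounds from Lemma~\ref{Bound on L0}, Lemma~\ref{Bounds for Restart}, and part~(a) into the per-cycle bound \eqref{eq:eq1} of Proposition~\ref{prop:nest_complex1}(a). Your case analysis for the lower bound $\mu_{l-1}\ge\min\{\mu_0,\bar\mu/2\}$ is slightly more explicit than the paper's (which simply invokes the update rule), but the logic is identical.
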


\begin{proof}
(a) It follows immediately from Proposition~\ref{prop:nest_complex1}(c) that the $l$-th cycle of RPF-SFISTA always terminates successfully in step 5 with a quadruple $(y,v,\xi,L)$ that satisfies relation \eqref{Func Value Decrease} and such that $(y,v)$ is an $\hat \epsilon$-optimal solution of \eqref{OptProb1}, if it is performed with $\mu_{l-1} \in (0, \bar \mu]$. Both conclusions of (a) then follow immediately from this observation, from the way that $\mu_{l}$ is updated when a cycle terminates in step 4 of RPF-SFISTA, and the fact that the first cycle of RPF-SFISTA is performed with $\mu=\mu_0$.

(b) Lemma~\ref{Bound on L0} implies that $\underbar M_l \leq \max\left\{\bar M_0,\kappa\bar L\right\}$ for every cycle index $l \geq 1$ generated by RPF-SFISTA. This relation and the definition of $\zeta_{l}$ in \eqref{omega zeta} imply that $\zeta_l^2 \leq \max\left\{(\bar L+M_0)^2, (\kappa+1)^2\bar L^2 \right\}$ and hence that $\zeta_l^2=\mathcal O(\bar L^2+\bar M_0^2)$. These relations, the fact that the way $\underbar M_l$ is chosen in step 0 implies that $\underbar M_l \geq \bar M_0$, Proposition~\ref{prop:nest_complex1}(a), and the definition of $Q_l$ imply that the $l$-th cycle performs at most
\[
\cal O_1\left(\sqrt{\frac{\max\left\{\bar M_0,\bar L\right\}}{\mu_{l-1}}}\log^+_1 \left(\frac{(\bar L^2+\bar M_0^2)C_{\bar \mu}(z_{l-1})}{\hat \epsilon^2}\right) +\log_0^{+}(\bar L/\bar M_0)\right)
\]
ACG iterations/resolvent evaluations. The result then follows from the above relation, relation \eqref{Key Restart Bound}, and the last conclusion of Proposition~\ref{Cycle Complexity}(a).
\end{proof}

We are now ready to prove Theorem~\ref{Proof of Total Complexity}. 
\begin{proof}[Proof of Theorem~\ref{Proof of Total Complexity}]
The first conclusion of Proposition~\ref{Cycle Complexity}(a) and Proposition~\ref{Cycle Complexity}(b) immediately imply the result.
\end{proof}

\section{Convex Composite Optimization (CCO)}\label{CCO Optimization}
This section presents an aggressive regularized method, namely A-REG, for solving convex composite optimization (CCO) problems. Specifically, A-REG considers the
problem
\begin{equation}\label{OptProbPsi}
\min\{\psi(u):= \psi_s(u) + \psi_n(u) : u \in \mathbb E\}
\end{equation}
whose solution set is nonempty and where functions $\psi$, $\psi_s$, and $\psi_n$ are assumed to satisfy the following assumptions:
\begin{itemize}
\item[\textbf{(B1)}]  $\psi_n: \mathbb E \to \R\cup\{+\infty\}$ is a possibly nonsmooth convex function with domain denoted by $\mathcal N$;

\item[\textbf{(B2)}]
$\psi_s: \mathbb E\to \R$ is a differentiable convex function that is $\bar L_{\psi_s}$-smooth, i.e., there exists $\bar L_{\psi_s} \geq 0$ such that, for all $z,z' \in \mathbb E$,
\begin{equation}\label{ineq:uppercurvaturePsi}
\|\nabla \psi_s(z') -  \nabla \psi_s(z)\|\le \bar L_{\psi_s} \|z'-z\|;
\end{equation}
\item[\textbf{(B3)}]
the sublevel sets of $\psi$ are bounded.
\end{itemize}
Assumption (B3) is a common assumption and a similar type of assumption is made in \cite{CatalystAgain, OptPar, NesterovNEWs}.

Given a tolerance $\epsilon>0$, the goal of A-REG is to find a pair $(w,r) \in \mathcal N \times \mathbb E$ such that
\begin{gather}\label{psi acg problem}
    \|r\|\leq \epsilon, \quad r \in \nabla \psi_s(w)+\partial \psi_n(w).
\end{gather}
Any pair $(w,r)$ satisfying \eqref{psi acg problem} is said to be an \textbf{$\epsilon$-optimal solution} of \eqref{OptProbPsi}.

\subsection{The Aggressive Regularization (A-REG) method}\label{sec:A-REG}

This subsection motivates and states the aggressive regularization (A-REG) method and presents its main complexity results. 

Like the methods in \cite{CatalystAgain, CurvatureFree, CatalystNC, NesterovNEWs, OptPar}, A-REG is a regularization method. At each of iterations, A-REG forms strongly convex regularized subproblems which it solves using the RPF-SFISTA method developed in Subsection~\ref{sec:SC-FISTA}. A-REG calls RPF-SFISTA with an aggressive initial choice of strong convexity estimate, $\mu_0$, that is possibly much larger than the known strong convexity parameter of the objective of the regularized subproblem. RPF-SFISTA checks a key inequality at each of iterations to determine when a regularized subproblem has been approximately solved. Hence, A-REG is more aggressive than the schemes employed by \cite{CatalystAgain, CatalystNC, OptPar}, which run a standard ACG or S-ACG variant for a predetermined number of iterations to solve each subproblem. The schemes in \cite{NesterovNEWs, CurvatureFree} do not run a S-ACG variant for a predetermined number of iterations to solve each subproblem, but are less aggressive than A-REG as both schemes do not use a restarted S-ACG method for solving each of subproblem but rather they use a standard S-ACG method with an accurate estimate for strong convexity parameter to do so.

The A-REG method is now presented.

\noindent\begin{minipage}[t]{1\columnwidth}%
\rule[0.5ex]{1\columnwidth}{1pt}

\noindent \textbf{A-REG Method}

\noindent \rule[0.5ex]{1\columnwidth}{1pt}%
\end{minipage}

\noindent \textbf{Universal Parameters}: scalars
$\chi \in (0,1)$ and $\beta>1$.

\noindent \textbf{Inputs}: let scalars $B\geq 1$ and $(\delta_0,\bar N_0)\in \Re^{2}_{++}$, an initial point $\vartheta_0\in \mathcal N$, a tolerance $\epsilon>0$, and functions $(\psi_s,\psi_n)$ and $\psi:=\psi_s+\psi_n$ be given, and set $w_0=\vartheta_0$ and $k=1$.

\noindent \textbf{Output}: a pair $(w,r)$ satisfying \eqref{psi acg problem}.

\begin{itemize}
\item[{\bf 1.}]
choose
$\underbar N_k \in [\max\{0.25\bar N_{k-1},\bar N_0\}, \bar N_{k-1}]$
and call the RPF-SFISTA method described in Subsection~\ref{sec:SC-FISTA}
with inputs
\begin{equation}\label{U0 z0 inputs}
z_0=\vartheta_{k-1}, \quad
(\mu_0,\bar M_0)= (B\delta_{k-1},\underbar N_k), \quad \hat \epsilon=\epsilon/6
\end{equation}
\begin{equation}\label{functions}
(f,h)=\left(\psi_s+\frac{\delta_{k-1}}{2}\|\cdot-\vartheta_{k-1}\|^2,\psi_n\right), \quad \phi(\cdot)=\psi(\cdot)+\frac{\delta_{k-1}}{2}\|\cdot-\vartheta_{k-1}\|^2
\end{equation}
and let $(w_k,u_k,\vartheta_k, \bar N_k)$ denote its output $(y,v, \xi, L)$;

\item[{\bf 2.}]
set 
\begin{equation}\label{def of rk}
r_{k}:=u_{k}+\delta_{k-1}(\vartheta_{k-1}-w_k);
\end{equation}
\item[{\bf 3.}]	
if $\|r_k\|\leq \epsilon$
then stop and output $(w,r)=(w_k,r_k)$; else
set $\delta_{k}=\delta_{k-1}/2$, $k\gets k+1$, and go to step~1. 
\end{itemize}
\noindent \rule[0.5ex]{1\columnwidth}{1pt}

Several remarks about A-REG are now given. First, A-REG is a parameter-free and an aggressive method in that it requires no knowledge of the Lipschitz constant and employs a restarted parameter-free method, RPF-SFISTA, to solve its strongly convex subproblems.
Second, the $k$-th iteration of A-REG calls RPF-SFISTA with a function $\phi$ as in \eqref{functions} that is $\delta_{k-1}$-strongly convex. However, A-REG calls RPF-SFISTA with an aggressive initial strong convexity estimate $\mu_0=B\delta_{k-1}$ where $B \geq 1$. Hence, A-REG differs from the methods in \cite{NesterovNEWs,CatalystAgain, CurvatureFree, CatalystNC, OptPar} as these methods \textbf{do not call restarted} S-ACG methods with an aggressive initial choice for the strong convexity estimate to solve their subproblems, but rather they call standard S-ACG or even ACG methods to solve them. Finally, it will be shown in the next subsection that for any $k \geq 1$, the pair $(w_k,r_k)$ always satisfies the inclusion $r_k\in \nabla \psi_s(w_k)+\partial \psi_n(w_k)$. Thus, if A-REG stops in its step 3, it follows that output pair $(w,r)=(w_k,r_k)$ is an $\epsilon$-optimal solution of \eqref{OptProbPsi}.

\begin{remar}\label{Assumption}
Assumption (B3) implies that the sublevel set $S:=\left\{x \in \mathcal N: \psi(x)\leq \psi(\vartheta_0)\right\}$ is bounded, i.e., any $\vartheta \in S$ satisfies $\|\vartheta\|\leq D$ where $D>0$.
  
\end{remar}
Before stating the main result of the A-REG method, the following quantities are introduced
\begin{equation}\label{Important quantities}
\bar L_{\delta_0}=\kappa\left(\bar L_{\psi_s}+\delta_{0}\right), \quad \psi^{0}=\psi(\vartheta_0)-\psi(w^{*}), \quad \bar{\mathcal L^{2}}=\max\left\{\bar L^2_{\delta_0}+\bar N_0^2,2\bar L_{\delta_0}^2\right\}
\end{equation}
where $\kappa$ is as in \eqref{D0 def}, $\delta_0$ is an input parameter of A-REG, $\vartheta_0$ is the initial point, and $w^{*}$ is an optimal solution of \eqref{OptProbPsi}.

The following theorem states the main complexity result of A-REG.

\begin{theorem}\label{Complexity A-REG}
A-REG terminates with a pair $(w,r)$ that is an $ \epsilon$-optimal solution of \eqref{OptProbPsi} 
in at most
 \begin{align*}\label{Theorem A-REG}
\mathcal O_1\left(\lceil \log^{+}_1\left(8D\delta_0/\epsilon\right) \rceil\left\lceil\log^{+}_12B\right\rceil\left[\sqrt{\frac{2\max\left\{\bar N_0,\bar L_{\delta_0}\right\}}{\min\left\{\delta_0, \epsilon/(8D)\right\}}}\log^+_1 \left(\frac{288\psi^{0}\bar{\mathcal L^2}}{\min\left\{\delta_0, \epsilon/(8D)\right\}\epsilon^2}\right)+\log(\bar L_{\delta_0}/\bar N_0)\right]\right)
\end{align*}

ACG iterations/resolvent evaluations
where $\epsilon$, $\delta_0$, and $\bar N_0$ are inputs to A-REG, $\bar L_{\delta_0}$, $\psi^{0}$, and $\bar{\mathcal L^2}$ are as in \eqref{Important quantities}, and $B\geq 1$ and $D>0$ are scalars as in \eqref{U0 z0 inputs} and Assumption~\ref{Assumption}, respectively.

\end{theorem}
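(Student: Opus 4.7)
The plan is to decompose the proof into two parts: bound the number $K$ of outer iterations of A-REG, then bound the per-call cost of the RPF-SFISTA invocation, and multiply the two bounds together.

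For the outer count, I would first verify that $r_k \in \nabla \psi_s(w_k) + \partial \psi_n(w_k)$. Since RPF-SFISTA is called on the subproblem with data as in \eqref{functions}, Proposition~\ref{prop:nest_complex1}(b) guarantees that its output $(w_k,u_k)$ satisfies $u_k \in \partial \phi_k(w_k)$, where $\phi_k(\cdot):=\psi(\cdot)+(\delta_{k-1}/2)\|\cdot-\vartheta_{k-1}\|^2$ is $\delta_{k-1}$-strongly convex, and $\|u_k\|\leq \hat \epsilon=\epsilon/6$. Unfolding the gradient of the quadratic and comparing with \eqref{def of rk} yields the required inclusion and the bound $\|r_k\| \leq \hat \epsilon + \delta_{k-1}\|\vartheta_{k-1}-w_k\|$. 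To control the second term, let $z_k^{*}$ be the unique minimizer of $\phi_k$; strong monotonicity of $\partial \phi_k$ (applied to $u_k \in \partial \phi_k(w_k)$ and $0 \in \partial \phi_k(z_k^{*})$) gives $\|w_k-z_k^{*}\| \leq \|u_k\|/\delta_{k-1}$. An easy induction using the inequality $\phi_k(\vartheta_k) \leq \phi_k(\vartheta_{k-1})=\psi(\vartheta_{k-1})$ from Proposition~\ref{prop:nest_complex1}(b) shows $\psi(\vartheta_k) \leq \psi(\vartheta_0)$; likewise $\phi_k(z_k^{*})\leq \phi_k(\vartheta_{k-1})$ forces $\psi(z_k^{*})\leq \psi(\vartheta_0)$, so both $\vartheta_{k-1}$ and $z_k^{*}$ lie in the sublevel set $S$ of Remark~\ref{Assumption}, whence $\|z_k^{*}-\vartheta_{k-1}\|\leq 2D$. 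Triangle inequality then yields $\|r_k\| \leq 2\hat\epsilon + 2D\delta_{k-1} = \epsilon/3 + 2D\delta_{k-1}$, so the step-3 stopping test must trigger once $\delta_{k-1}\leq \epsilon/(3D)$. Since $\delta_k = \delta_0/2^k$, this gives $K = \mathcal O(\log^+_1(D\delta_0/\epsilon))$, which matches the first logarithmic factor in the stated bound up to the slack between $\epsilon/(3D)$ and $\epsilon/(8D)$.

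For the per-call cost, I would apply Theorem~\ref{Total Complexity} directly to each invocation of RPF-SFISTA made by A-REG, with the substitutions $\bar \mu = \delta_{k-1}$, $\mu_0=B\delta_{k-1}$, $\hat\epsilon=\epsilon/6$, Lipschitz constant $\bar L \le \bar L_{\psi_s}+\delta_{k-1}\le \bar L_{\psi_s}+\delta_0$, and $\bar M_0=\underbar N_k$. This produces a factor $\lceil\log^+_1(2B)\rceil$ from $\log^+_1(2\mu_0/\bar\mu)$, a square root $\sqrt{2\max\{\bar N_0,\bar L_{\psi_s}+\delta_0\}/\delta_{k-1}}$ (using $B\geq 1$ so that $\min\{\mu_0,\bar\mu/2\}=\delta_{k-1}/2$), and a $\log^+_1$ factor involving $C_{\bar\mu}(z_0) = (8/\delta_{k-1})[\phi_k(\vartheta_{k-1})-\phi_k(z_k^{*})] \le 8\psi^0/\delta_{k-1}$, where the last inequality uses $\phi_k(\vartheta_{k-1})=\psi(\vartheta_{k-1})\leq \psi(\vartheta_0)$ together with $\phi_k(z_k^{*}) \geq \psi(w^{*})$. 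I would also prove by induction on $k$, using $\bar N_k \leq \max\{\underbar N_k,\kappa(\bar L_{\psi_s}+\delta_{k-1})\}$ from Proposition~\ref{prop:nest_complex1}(b) and the rule $\underbar N_{k+1} \leq \bar N_k$, that $\underbar N_k \leq \max\{\bar N_0,\bar L_{\delta_0}\}$ for every $k$, which controls $\max\{\bar M_0,\bar L\}$ uniformly. Finally, since the halving just before termination left $\delta_{k-2}>\epsilon/(3D)$, every encountered $\delta_{k-1}$ satisfies $\delta_{k-1} \geq \min\{\delta_0,\epsilon/(6D)\}$, which uniformly controls the $1/\delta_{k-1}$ terms.

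Multiplying the outer bound $K = \mathcal O(\log^+_1(D\delta_0/\epsilon))$ by the uniform per-call bound just derived yields the announced complexity. The main obstacle I anticipate is bookkeeping: matching my constants $3$ and $6$ against the paper's $8$ in $\epsilon/(8D)$, recognizing that $\bar{\mathcal L^2}$ bundles the squared quantities $\bar L_{\delta_0}^2+\bar N_0^2$ appearing inside the inner logarithm (and the factor $288 = 8 \cdot 36$ comes from $8/\hat\epsilon^2 = 8\cdot 36/\epsilon^2$), and verifying that replacing the varying $\delta_{k-1}$ by the worst-case $\min\{\delta_0,\epsilon/(8D)\}$ reproduces both the square-root factor $\sqrt{2\max\{\bar N_0,\bar L_{\delta_0}\}/\min\{\delta_0,\epsilon/(8D)\}}$ and the $\log^+_1(288 \psi^0 \bar{\mathcal L^2}/[\min\{\delta_0,\epsilon/(8D)\}\epsilon^2])$ factor in the statement without losing the optimal $\tilde{\mathcal O}(\sqrt{\bar L_{\psi_s}/\epsilon})$ dependence.
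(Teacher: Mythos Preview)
Your proposal is correct and follows essentially the same two–step decomposition (outer iteration count times per-call RPF-SFISTA cost) that the paper uses, invoking the same ingredients: Theorem~\ref{Total Complexity} for the inner complexity, the inductive bound $\underbar N_k\le\max\{\bar N_0,\bar L_{\delta_0}\}$, the sublevel-set argument for the outer count, and the monotonicity $\psi(\vartheta_k)\le\psi(\vartheta_0)$.

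The one tactical difference worth flagging is in how you control $\|\vartheta_{k-1}-w_k\|$. You route through the exact subproblem minimizer $z_k^{*}$, using strong monotonicity to get $\|w_k-z_k^{*}\|\le\|u_k\|/\delta_{k-1}$ and the sublevel-set bound to get $\|z_k^{*}-\vartheta_{k-1}\|\le 2D$. The paper instead routes through the auxiliary output $\vartheta_k=\xi$ of RPF-SFISTA: from $\phi_k(\vartheta_k)\le\phi_k(w_k)$ and $u_k\in\partial\phi_k(w_k)$ it extracts $\tfrac{\delta_{k-1}}{2}\|\vartheta_k-w_k\|\le\|u_k\|$, and then bounds $\|w_k\|$ directly. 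Both arguments are valid; yours has the minor advantage of not needing the extra output $\xi$ and yields the slightly sharper threshold $\epsilon/(3D)$ in place of $\epsilon/(4D)$, which is why your constants come out a bit better than the paper's $8$ and $288$ but are absorbed in the $\mathcal O_1$ anyway.
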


Several remarks about Theorem~\ref{Complexity A-REG} are now given. It follows from the definition of $\bar L_{\delta_0}$ in \eqref{Important quantities} that, up to logarithmic terms, A-REG performs at most 
\[
\mathcal O_1\left(\sqrt{\frac{\bar L_{\psi_s}}{\epsilon}} \right)
\]
ACG iterations/resolvent iterations to find a pair $(w,r)$ that is an $\epsilon$-approximate optimal solution of \eqref{OptProbPsi}. Ignoring logarithmic terms, this complexity is optimal for finding an approximate optimal solution according to the criterion in \eqref{psi acg problem}.
 
\subsection{Proof of Theorem~\ref{Complexity A-REG}}
This subsection is dedicated to proving Theorem~\ref{Complexity A-REG}. Before stating the next lemma, we introduce the following quantities which are used throughout this subsection
\begin{equation}\label{Psi Delta}
\psi_{\delta}(w;\vartheta):=\psi(w)+\frac{\delta}{2}\|w-\vartheta\|^2, \quad \mathcal C_{\delta}(\cdot):=\frac{8}{\delta}\left[\psi(\cdot)-\psi(w^{*})\right]
\end{equation}
where $\delta>0$ is a scalar and $w^{*}$ is an optimal solution of \eqref{OptProbPsi}. 

Since A-REG calls the RPF-SFISTA method during each of its iterations, the following lemma specializes Theorem~\ref{Total Complexity}, which states the complexity of RPF-SFISTA and key properties of its output, to this set-up.

\begin{lemma}\label{translation RPF-SFISTA}
    The following statements about the $k$-th iteration of A-REG hold
    \begin{itemize}
        \item[(a)] the function $f$ in \eqref{functions} is $(\bar L_{\psi_s}+\delta_{0})$-smooth and $\delta_{k-1}$-strongly convex. As a consequence, the function, $\phi$ in \eqref{functions} is $\delta_{k-1}$-strongly convex;
        \item[(b)] the call made to the RPF-SFISTA method in step 1 outputs a quadruple $(w_k,u_k,\vartheta_k, \bar N_k)$ that satisfies the following relations
        \begin{equation}\label{VarTheta Psi}
          \psi(\vartheta_{k})\leq \psi(\vartheta_{k-1}), \quad \psi_{\delta_{k-1}}(\vartheta_{k};\vartheta_{k-1})\leq \psi_{\delta_{k-1}}(w_{k};\vartheta_{k-1}), \quad \bar N_0\leq \bar N_{k} \leq \max\{\underbar N_k,\bar L_{\delta_0}\},
        \end{equation}
        \begin{equation}\label{u inclusion}
          u_{k}\in \nabla \psi_s(w_k)+\delta_{k-1}(w_k-\vartheta_{k-1})+\partial \psi_n(w_k), \quad \|u_k\|\leq \frac{\epsilon}{6}
        \end{equation}
where $\bar L_{\delta_0}$ and $\psi_{\delta}(w;\vartheta)$ are as in \eqref{Important quantities} and \eqref{Psi Delta}, respectively;
        \item[(c)] the call made to the RPF-SFISTA method performs at most 
\begin{align*}
\mathcal O_1\left(\left\lceil\log^{+}_1\left(2B\right)\right\rceil\left[\sqrt{\frac{2\max\left\{\underbar N_k,\bar L_{\delta_0}\right\}}{\delta_{k-1}}}\log^+_1 \left(\frac{36\left(\bar L_{\delta_0}^2+\underbar N^{2}_k\right) \mathcal C_{\delta_{k-1}}(\vartheta_{k-1})}{\epsilon^2}\right)+\log(\bar L_{\delta_0}/\bar N_0)\right]\right)
\end{align*}
ACG iterations/resolvent evaluations to find a quadruple $(w_k,u_k,\vartheta_k, \bar N_k)$ satisfying relations \eqref{VarTheta Psi} and \eqref{u inclusion} , where $B\geq 1$ and $\mathcal C_{\delta}(\cdot)$ are as in \eqref{U0 z0 inputs} and \eqref{Psi Delta}, respectively.
\end{itemize}

\end{lemma}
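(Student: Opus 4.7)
The plan is to establish each of the three parts by carefully translating the conclusions of Theorem~\ref{Total Complexity} (and Proposition~\ref{prop:nest_complex1}) to the specific inputs passed to RPF-SFISTA inside the $k$-th iteration of A-REG.

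For part (a), I would use a direct convexity/smoothness bookkeeping: since $\psi_s$ is $\bar L_{\psi_s}$-smooth by (B2) and the quadratic $\frac{\delta_{k-1}}{2}\|\cdot-\vartheta_{k-1}\|^2$ is $\delta_{k-1}$-smooth and $\delta_{k-1}$-strongly convex, the sum $f$ is $(\bar L_{\psi_s}+\delta_{k-1})$-smooth and $\delta_{k-1}$-strongly convex; since $\delta_{k-1}\le\delta_0$ (as the $\delta_k$'s are halved at each iteration), we get the $(\bar L_{\psi_s}+\delta_0)$-smoothness. Adding $\psi_n$, which is convex by (B1), preserves $\delta_{k-1}$-strong convexity of $\phi$.

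For part (b), I would invoke Proposition~\ref{prop:nest_complex1}(b,c) with the RPF-SFISTA inputs as given in \eqref{U0 z0 inputs}--\eqref{functions}. The inclusion and bound on $\|u_k\|$ come from the $\hat\epsilon$-optimality guarantee $v\in\nabla f(y)+\partial h(y)$ with $\|v\|\le\hat\epsilon=\epsilon/6$, after noting $\nabla f(w_k)=\nabla\psi_s(w_k)+\delta_{k-1}(w_k-\vartheta_{k-1})$ and $\partial h=\partial\psi_n$. The two inequalities in \eqref{VarTheta Psi} follow from $\phi(\xi)\le\min\{\phi(z_0),\phi(y)\}$: the second is immediate since $\phi=\psi_{\delta_{k-1}}(\cdot;\vartheta_{k-1})$ and $\xi=\vartheta_k$, $y=w_k$; the first follows because $\psi(\vartheta_k)\le\psi_{\delta_{k-1}}(\vartheta_k;\vartheta_{k-1})\le\psi_{\delta_{k-1}}(\vartheta_{k-1};\vartheta_{k-1})=\psi(\vartheta_{k-1})$. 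For the bound on $\bar N_k$, I would use $\bar M_0\le L\le\max\{\bar M_0,\kappa\bar L\}$ from \eqref{Func Value Decrease}, substitute $\bar M_0=\underbar N_k$ and $\bar L\le\bar L_{\psi_s}+\delta_0$, recall $\bar L_{\delta_0}=\kappa(\bar L_{\psi_s}+\delta_0)$ from \eqref{Important quantities}, and use the step~1 requirement $\underbar N_k\ge\bar N_0$ to conclude $\bar N_0\le\underbar N_k\le\bar N_k\le\max\{\underbar N_k,\bar L_{\delta_0}\}$.

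For part (c), which I expect to be the main bookkeeping obstacle, I would substitute the A-REG inputs directly into the complexity bound \eqref{eq:eq1-3} of Theorem~\ref{Total Complexity}. Since $B\ge 1$, we have $\min\{\mu_0,\bar\mu/2\}=\min\{B\delta_{k-1},\delta_{k-1}/2\}=\delta_{k-1}/2$, which produces the factor $\sqrt{2\max\{\underbar N_k,\bar L_{\delta_0}\}/\delta_{k-1}}$ once we bound $\max\{\bar M_0,\bar L\}\le\max\{\underbar N_k,\bar L_{\delta_0}\}$ as in part (b). The outer logarithmic factor becomes $\lceil\log^+_1(2\mu_0/\bar\mu)\rceil=\lceil\log^+_1(2B)\rceil$. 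For the inner log, I would bound $\bar L^2+\bar M_0^2\le\bar L_{\delta_0}^2+\underbar N_k^2$ and use $\hat\epsilon^2=\epsilon^2/36$, while controlling $C_{\bar\mu}(z_0)=(8/\delta_{k-1})[\phi(\vartheta_{k-1})-\phi(z^\ast)]$ by $(8/\delta_{k-1})[\psi(\vartheta_{k-1})-\psi(w^\ast)]=\mathcal C_{\delta_{k-1}}(\vartheta_{k-1})$, where the key inequality $\phi(z^\ast)\ge\psi(w^\ast)$ uses that $\psi_{\delta_{k-1}}(\cdot;\vartheta_{k-1})\ge\psi(\cdot)\ge\psi(w^\ast)$. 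Finally, $\log^+_0(\bar L/\bar M_0)\le\log(\bar L_{\delta_0}/\bar N_0)$ up to a constant since $\underbar N_k\ge\bar N_0$, yielding the stated bound.
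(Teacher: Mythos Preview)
Your proposal is correct and follows essentially the same route as the paper's own proof: each part is obtained by direct substitution of the A-REG inputs \eqref{U0 z0 inputs}--\eqref{functions} into the output guarantees \eqref{Func Value Decrease} and the complexity bound \eqref{eq:eq1-3} of Theorem~\ref{Total Complexity}, together with the observations $\delta_{k-1}\le\delta_0$, $\kappa\ge 1$ (so that $\bar L_{\psi_s}+\delta_0\le \bar L_{\delta_0}$), $B\ge 1$ (so that $\min\{B\delta_{k-1},\delta_{k-1}/2\}=\delta_{k-1}/2$), $\underbar N_k\ge \bar N_0$, and $\phi(\vartheta_{k-1})=\psi(\vartheta_{k-1})$ with $\phi(z^\ast)\ge\psi(w^\ast)$. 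The only cosmetic difference is that the paper invokes Theorem~\ref{Total Complexity} directly for part~(b) rather than Proposition~\ref{prop:nest_complex1}(b,c), and in part~(c) it plugs $\bar L=\bar L_{\delta_0}$ into \eqref{eq:eq1-3} from the outset instead of first taking $\bar L=\bar L_{\psi_s}+\delta_0$ and then bounding upward; both choices yield the same conclusion.
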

\begin{proof}
(a) It follows immediately from the facts that $\psi_{s}$ is $\bar L_{\psi_s}$-smooth and $\psi_{s}$ and $\psi$ are convex functions and the fact that during the $k$-th iteration of A-REG, the call to RPF-SFISTA is made with $f(\cdot):=\psi_s(\cdot)+0.5\delta_{k-1}\|\cdot-\vartheta_{k-1}\|^2$ and $\phi(\cdot):=\psi(\cdot)+0.5\delta_{k-1}\|\cdot-\vartheta_{k-1}\|^2$ that $f$ is $(\bar L_{\psi_s}+\delta_{k-1})$-smooth and $\delta_{k-1}$-strongly convex and $\phi$ is $\delta_{k-1}$-strongly convex. The result then follows immediately from this conclusion and the fact that the way $\delta_{k}$ is updated in step 3 of A-REG implies that $\delta_{k}\leq \delta_{0}$ for any iteration index $k\geq 1$.

(b) It follows from the definitions of $\phi$ and $\psi_{\delta}$ in \eqref{functions} and \eqref{Psi Delta}, respectively, that the call to RPF-SFISTA during the $k$-th iteration of A-REG is made with $\phi(\cdot):=\psi_{\delta_{k-1}}(\cdot;\vartheta_{k-1})$. It then follows from this observation, the fact that RPF-SFISTA is called with functions $(f,h)$ and $\phi$ as in \eqref{functions} and inputs $(z_0,\bar M_0)=(\vartheta_{k-1},\bar N_k)$, Theorem~\ref{Total Complexity}, and part (a) that RPF-SFISTA outputs a quadruple $(w_k,u_k,\vartheta_k,\bar N_k)=(y,v,\xi,L)$ that satisfies
\[
\psi_{\delta_{k-1}}(\vartheta_k;\vartheta_{k-1})\leq \min\left\{\psi_{\delta_{k-1}}(\vartheta_{k-1};\vartheta_{k-1}),\psi_{\delta_{k-1}}(w_k;\vartheta_{k-1})\right\}, \quad \underbar N_k\leq \bar N_k \leq  \max\left\{\underbar N_k,\kappa (\bar L_{\psi_s}+\delta_0)\right\}.
\]
The relations in \eqref{VarTheta Psi} then follow from the above relations, the fact that the way $\underbar N_k$ is chosen in step 1 implies that $\underbar N_k\geq \bar N_0$, the definition of $\bar L_{\delta_0}$ in \eqref{Important quantities}, and the fact that the definition of $\psi_{\delta}$ in \eqref{Psi Delta} implies that $\psi(\vartheta_{k})\leq \psi_{\delta_{k-1}}(\vartheta_k;\vartheta_{k-1})$ and $\psi_{\delta_{k-1}}(\vartheta_{k-1};\vartheta_{k-1})=\psi(\vartheta_{k-1})$.

It also follows immediately from Theorem~\ref{Total Complexity}, the definition of $\epsilon$-optimal solution, and the fact that RPF-SFISTA is called with  tolerance $\hat \epsilon=\epsilon/6$ and function pair $(f,h)$ as in \eqref{functions} that pair $(w_k,u_k)=(y,v)$ satisfies both relations in \eqref{u inclusion}.

(c) Consider the call made to RPF-SFISTA during the $k$-th iteration of A-REG. It follows directly from part (a), the fact that $\kappa \geq 1$, and the definition of $\bar L_{\delta_0}$ in \eqref{Important quantities} that the function $f$ in \eqref{functions} is $\bar L_{\delta_0}$-smooth and hence satisfies relation \eqref{ineq:uppercurvature1} with $\bar L=\bar L_{\delta_0}$. Part (a) implies that $\phi$ is $\delta_{k-1}$-strongly convex and thus satisfies assumption (A3) with $\bar \mu=\delta_{k-1}$. It also follows from the definitions of $\phi$ and $\psi_{\delta}$ in \eqref{functions} and \eqref{Psi Delta}, respectively that $\phi(\vartheta_{k-1})=\psi(\vartheta_{k-1})$ and that $\psi(w^{*})\leq \min_{x}\psi_{\delta_{k-1}}(x;\vartheta_{k-1})$ where $w^{*}$ is an optimal solution of \eqref{OptProbPsi}. It then follows from these conclusions, the fact that the call to RPF-SFISTA is made with tolerance $\hat \epsilon=\epsilon/6$, functions $(f,h)$ and $\phi$ as in \eqref{functions}, initial point $z_0=\vartheta_{k-1}$, and inputs $(\mu_0,\bar M_0)=(B\delta_{k-1},\underbar N_k)$, and the definitions of $C_{\bar \mu}(\cdot)$ and $\mathcal C_{\delta}(\cdot)$ in \eqref{D0 def} and \eqref{Psi Delta}, respectively that RPF-SFISTA performs at most
\[
\mathcal O_1\left(\left\lceil\log^{+}_1\left(2B\right)\right\rceil\left[\sqrt{\frac{2\max\left\{\underbar N_k,\bar L_{\delta_0}\right\}}{\min\left\{B\delta_{k-1}, \delta_{k-1}/2\right\}}}\log^+_1 \left(\frac{36\left(\bar L_{\delta_0}^2+\underbar N^{2}_k\right) \mathcal C_{\delta_{k-1}}(\vartheta_{k-1})}{\epsilon^2}\right)+\log(\bar L_{\delta_0}/ \underbar N_k)\right]\right)
\]
ACG iterations/resolvent evaluations. The result then follows from this conclusion, the fact that the way $\underbar N_k$ is chosen in step 1 of A-REG implies that $\underbar N_k \geq \bar N_0$, the fact that $B\geq 1$, and part (b).
%
%
%
%
\end{proof}

\begin{lemma}\label{Bound on NK}
For every iteration index $k \geq 1$ generated by A-REG, the quantity $\underbar N_k$ satisfies
\begin{equation}\label{Bound NK}
\underbar N_{k}\leq \max\left\{\bar N_0,\bar L_{\delta_{0}}\right\}
\end{equation}
where $\bar N_0$ is an input to A-REG and $\bar L_{\delta_0}$ is as in \eqref{Important quantities}.
\end{lemma}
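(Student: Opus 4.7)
The plan is to mimic the inductive argument used in Lemma~\ref{Bound on L0}, since A-REG's rule for $\underbar N_k$ in step 1 mirrors RPF-SFISTA's rule for $\underbar M_l$ in its own step 0, and the role that Lemma~\ref{lem:gamma-sfista0}(b) (specifically the relation $\underbar M_l \le L_{j-1} \le \max\{\underbar M_l, \kappa\bar L\}$) plays inside RPF-SFISTA is now played by the last relation in \eqref{VarTheta Psi} of Lemma~\ref{translation RPF-SFISTA}(b), which asserts that $\bar N_0 \le \bar N_k \le \max\{\underbar N_k, \bar L_{\delta_0}\}$. So the whole proof is essentially a reduction to two facts: (i) the rule $\underbar N_k \in [\max\{0.25\bar N_{k-1}, \bar N_0\}, \bar N_{k-1}]$, and (ii) the upper bound on $\bar N_{k-1}$ furnished by the previous call to RPF-SFISTA.

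First, I would establish the analogue of \eqref{Key Lipschitz Inequality} for A-REG: for every $k \ge 2$, one has $\bar N_0 \le \bar N_{k-1} \le \max\{\underbar N_{k-1}, \bar L_{\delta_0}\}$. This is immediate from Lemma~\ref{translation RPF-SFISTA}(b) applied at iteration $k-1$, which guarantees that the call to RPF-SFISTA during the $(k-1)$-st iteration of A-REG generates a fourth component $\bar N_{k-1}$ satisfying exactly this two-sided bound.

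Second, I would run an induction on $k$. For the base case $k=1$, observe that the choice rule forces $\underbar N_1 \in [\max\{0.25\bar N_0, \bar N_0\}, \bar N_0] = \{\bar N_0\}$, so $\underbar N_1 = \bar N_0 \le \max\{\bar N_0, \bar L_{\delta_0}\}$. For the inductive step, assume \eqref{Bound NK} holds at $k-1$. The choice rule yields $\underbar N_k \le \bar N_{k-1}$, and combining with the analogue of \eqref{Key Lipschitz Inequality} established above gives
\[
\underbar N_k \;\le\; \bar N_{k-1} \;\le\; \max\{\underbar N_{k-1}, \bar L_{\delta_0}\} \;\le\; \max\{\bar N_0, \bar L_{\delta_0}\},
\]
where the last inequality uses the inductive hypothesis. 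This closes the induction.

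There is no real obstacle here; the only subtlety is simply to notice that Lemma~\ref{translation RPF-SFISTA}(b) already packages the two-sided bound on $\bar N_k$ in the form $\bar N_0 \le \bar N_k \le \max\{\underbar N_k, \bar L_{\delta_0}\}$, precisely matching the role played by \eqref{upper bound} in the RPF-SFISTA analysis. Once this identification is made, the proof is a one-line induction.
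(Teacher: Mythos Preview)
Your proposal is correct and follows essentially the same approach as the paper: an induction on $k$ with base case $\underbar N_1=\bar N_0$, and inductive step combining the choice rule $\underbar N_k\le\bar N_{k-1}$ with the bound $\bar N_{k-1}\le\max\{\underbar N_{k-1},\bar L_{\delta_0}\}$ from Lemma~\ref{translation RPF-SFISTA}(b) and the induction hypothesis. The paper's proof is slightly more compressed but the logic is identical.
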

\begin{proof}
The proof follows from an induction argument. The fact that $\underbar N_1=\bar N_0$ immediately implies that relation \eqref{Bound NK} holds for $k=1$. Suppose now that $k\geq 2$ is an iteration index generated by A-REG and that inequality \eqref{Bound NK} holds for $k-1$. It follows from the way $\underbar N_k$ is chosen in step 1 of the $k$-th iteration of A-REG and the first inequality in the last relation in \eqref{VarTheta Psi} that $\underbar N_k \leq \bar N_{k-1}$. This relation and the last inequality in \eqref{VarTheta Psi} with $k=k-1$ then imply that
\[\underbar N_{k} \leq \bar N_{k-1} \overset{\eqref{VarTheta Psi}}{\leq} \max\{\underbar N_{k-1},\bar L_{\delta_0}\} \leq \max\left\{\bar N_0,\bar L_{\delta_0}\right\}\]
where the last inequality is due to the induction hypothesis. Hence, relation \eqref{Bound NK} holds for every iteration index $k\geq 1$ generated by A-REG.
\end{proof}
A useful fact that is used in the proof of following result is that if a function $\Psi:\mathbb E \to\R\cup\{+\infty\}$ is $\nu$-convex with modulus $\nu>0$, then it has an unique global minimum $x^*$ and
\begin{equation}\label{ineq:nu-convex}
\Psi(x^*) +\frac{\nu}{2}\|\cdot - x^*\|^2\leq \Psi(\cdot). 
\end{equation}
\begin{lemma}\label{Bound on SubLevel Set}
For any iteration index $k \geq 1$ generated by A-REG, the following relations hold
\begin{align}
    &r_k \in \nabla \psi_s(w_k)+\partial \psi_n(w_k) \label{inclusion r}\\
    &\|\vartheta_k\|\leq D, \quad \|w_k\|\leq D+\frac{\epsilon}{3\delta_{k-1}}\label{sublevel bound}
\end{align}
where $D>0$ is as in Assumption~\ref{Assumption} and $ \epsilon>0$ is the input tolerance to A-REG.
\end{lemma}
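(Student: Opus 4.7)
The plan is to prove the three claims in turn, with the first being a direct algebraic manipulation, the second an induction using the descent relation in Lemma~\ref{translation RPF-SFISTA}(b), and the third a strong-monotonicity comparison against the exact minimizer of the $k$th regularized subproblem.

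For the inclusion $r_k \in \nabla \psi_s(w_k) + \partial \psi_n(w_k)$, I would substitute the definition \eqref{def of rk} of $r_k$ into the inclusion in \eqref{u inclusion}: the added term $\delta_{k-1}(\vartheta_{k-1} - w_k)$ exactly cancels the strong-convexity term $\delta_{k-1}(w_k - \vartheta_{k-1})$ appearing in the subdifferential of the regularized function, leaving $\nabla \psi_s(w_k) + \partial \psi_n(w_k)$. For $\|\vartheta_k\| \leq D$, the first inequality in \eqref{VarTheta Psi} gives $\psi(\vartheta_k) \leq \psi(\vartheta_{k-1})$ at every iteration, so a straightforward induction yields $\psi(\vartheta_k) \leq \psi(\vartheta_0)$, placing $\vartheta_k$ in the sublevel set $S$ of Remark~\ref{Assumption} and hence giving $\|\vartheta_k\| \leq D$.

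For the bound on $\|w_k\|$, I would introduce the unique exact minimizer $\tilde w_k := \argmin_{w} \psi_{\delta_{k-1}}(w;\vartheta_{k-1})$, which exists by the $\delta_{k-1}$-strong convexity of $\psi_{\delta_{k-1}}(\cdot;\vartheta_{k-1})$ established in Lemma~\ref{translation RPF-SFISTA}(a). Its first-order optimality condition reads $0 \in \nabla \psi_s(\tilde w_k) + \delta_{k-1}(\tilde w_k - \vartheta_{k-1}) + \partial \psi_n(\tilde w_k)$. Pairing this with \eqref{u inclusion} and invoking the $\delta_{k-1}$-strong monotonicity of the subdifferential of $\psi_{\delta_{k-1}}(\cdot;\vartheta_{k-1})$, followed by Cauchy--Schwarz, yields $\|w_k - \tilde w_k\| \leq \|u_k\|/\delta_{k-1} \leq \epsilon/(6\delta_{k-1})$. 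Comparing $\tilde w_k$ to $\vartheta_{k-1}$ in $\psi_{\delta_{k-1}}(\cdot;\vartheta_{k-1})$ gives $\psi(\tilde w_k) \leq \psi_{\delta_{k-1}}(\tilde w_k;\vartheta_{k-1}) \leq \psi_{\delta_{k-1}}(\vartheta_{k-1};\vartheta_{k-1}) = \psi(\vartheta_{k-1}) \leq \psi(\vartheta_0)$, so $\tilde w_k \in S$ and $\|\tilde w_k\| \leq D$. The triangle inequality then delivers $\|w_k\| \leq D + \epsilon/(6\delta_{k-1}) \leq D + \epsilon/(3\delta_{k-1})$.

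The main obstacle is the third step: although $(w_k,u_k)$ only satisfies an approximate stationarity condition for the regularized subproblem, one must recognize that it can be compared, through strong monotonicity of the perturbed subdifferential, to the exact stationary point $\tilde w_k$, and that this auxiliary reference point itself lies in the bounded sublevel set precisely because the prox-center $\vartheta_{k-1}$ does (by the induction in the second step). Once these two facts are in place, the triangle inequality closes the argument with slack.
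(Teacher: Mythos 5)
Your proof is correct, and the first two claims (the inclusion for $r_k$ and the bound $\|\vartheta_k\|\leq D$ via induction on the descent relation in \eqref{VarTheta Psi}) coincide with the paper's argument. For the bound on $\|w_k\|$, however, you take a genuinely different route. The paper never introduces the exact minimizer of the subproblem: it notes that \eqref{u inclusion} means $u_k\in\partial\psi_{\delta_{k-1}}(w_k;\vartheta_{k-1})$, applies the strong-convexity growth inequality \eqref{ineq:nu-convex} to $\psi_{\delta_{k-1}}(\cdot;\vartheta_{k-1})-\inner{u_k}{\cdot-w_k}$ at the point $x=\vartheta_k$, and then invokes the second relation in \eqref{VarTheta Psi} (namely $\psi_{\delta_{k-1}}(\vartheta_k;\vartheta_{k-1})\leq\psi_{\delta_{k-1}}(w_k;\vartheta_{k-1})$, part of RPF-SFISTA's output guarantee) together with Cauchy--Schwarz to get $\tfrac{1}{2}\delta_{k-1}\|w_k-\vartheta_k\|\leq\|u_k\|$, and concludes by the triangle inequality through $\vartheta_k$. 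You instead compare $w_k$ to the exact minimizer $\tilde w_k$ of $\psi_{\delta_{k-1}}(\cdot;\vartheta_{k-1})$ via $\delta_{k-1}$-strong monotonicity of the perturbed subdifferential, and place $\tilde w_k$ in the sublevel set by comparing it to the prox-center $\vartheta_{k-1}$. Your version does not use the second relation in \eqref{VarTheta Psi} at all (only the descent relation is needed for the induction), requires the auxiliary point $\tilde w_k$, and yields the slightly sharper bound $\|w_k\|\leq D+\epsilon/(6\delta_{k-1})$, which of course implies the stated one; the paper's version buys a proof that works entirely with quantities the algorithm actually produces and leans directly on the guarantees of Lemma~\ref{translation RPF-SFISTA}(b). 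Both arguments are sound.
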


\begin{proof}
Relation \eqref{inclusion r} follows immediately from the inclusion in \eqref{u inclusion} and the definition of $r_k$ in \eqref{def of rk}. It follows immediately from the first inequality in \eqref{VarTheta Psi} and a simple induction argument that $\psi(\vartheta_k)\leq \psi(\vartheta_0)$ for any iteration index $k\geq 1$ generated by A-REG. This relation and Assumption~\ref{Assumption} then immediately imply that the first relation in \eqref{sublevel bound} holds.

The second relation in \eqref{sublevel bound} clearly holds if $\|w_k-\vartheta_k\|=0$ since this relation together with the first relation in \eqref{sublevel bound} implies that $\|w_k\|=\|\vartheta_k\|\leq D$.
Hence, assume that $\|w_k-\vartheta_k\|> 0$.
It follows from the definition of $\psi_{\delta}(w;\vartheta)$ in \eqref{Psi Delta} and the facts that $\psi_s$ and $\psi_n$ are convex that the inclusion in \eqref{u inclusion} is equivalent to $u_{k} \in \partial \psi_{\delta_{k-1}}(w_k;\vartheta_{k-1})$. This inclusion implies that \[w_k=\argmin_{x}\psi_{\delta_{k-1}}(x;\vartheta_{k-1})-\inner{u_k}{x-w_k}.\] It then follows from this relation, the fact that the definition of $\psi_{\delta}(w;\vartheta)$ in \eqref{Psi Delta} implies that $\psi_{\delta_{k-1}}(x;\vartheta_{k-1})-\inner{u_k}{x-w_k}$ is $\delta_{k-1}$-strongly convex, and relation \eqref{ineq:nu-convex} with $\Psi(\cdot)=\psi_{\delta_{k-1}}(\cdot;\vartheta_{k-1})-\inner{u_k}{\cdot-w_k}$, $x^{*}=w_k$, and $\nu=\delta_{k-1}$ that
\begin{equation}\label{strong conv w}
\psi_{\delta_{k-1}}(w_k;\vartheta_{k-1})+\frac{\delta_{k-1}}{2}\|x-w_k\|^2\leq \psi_{\delta_{k-1}}(x;\vartheta_{k-1})-\inner{u_k}{x-w_k}
\end{equation}
for all $x \in \mathbb E.$
Relation \eqref{strong conv w} with $x=\vartheta_{k}$ and the second relation in \eqref{VarTheta Psi} imply that
$\inner{u_k}{w_k-\vartheta_k}\geq 0.5\delta_{k-1}\|\vartheta_k-w_k\|^2$
which together with the fact that $\|w_k-\vartheta_k\|>0$ and Cauchy-Schwarz inequality implies that $0.5\delta_{k-1}\|\vartheta_k-w_k\|\leq \|u_k\|$. This relation, the first relation in \eqref{sublevel bound}, the inequality in \eqref{u inclusion}, and reverse triangle inequality then imply that
\[
    \|w_k\|\leq \|\vartheta_k\|+\|\vartheta_k-w_k\|\overset{\eqref{sublevel bound}}{\leq} D+\frac{2}{\delta_{k-1}}\|u_k\|\overset{\eqref{u inclusion}}{\leq} D+\frac{\epsilon}{3\delta_{k-1}}
\]
from which the second relation in \eqref{sublevel bound} immediately follows.
\end{proof}

The following proposition establishes an upper bound on the number of iterations that A-REG performs and, consequently, a lower bound on the quantity $\delta_k$.

\begin{proposition}\label{PT Outer Iterations}
A-REG performs at most $\lceil \log^{+}_1\left(8D\delta_0/\epsilon\right) \rceil$ iterations to find a pair $(w,r)$ that satisfies \eqref{psi acg problem}. Moreover, for every iteration index $k\geq 1$ generated by A-REG, it holds that $\delta_{k-1}\geq \min\left\{\delta_0, \epsilon/(8D)\right\}$.
\end{proposition}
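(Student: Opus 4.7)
My plan is to combine an explicit bound on $\|r_k\|$, derived from \eqref{def of rk}, \eqref{u inclusion}, and the boundedness estimates of Lemma~\ref{Bound on SubLevel Set}, with the geometric halving rule $\delta_k=\delta_{k-1}/2$ from step~3 to deduce both claims simultaneously.

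First, I would establish the key residual estimate. By \eqref{def of rk} and the triangle inequality,
\begin{equation*}
\|r_k\|\leq \|u_k\|+\delta_{k-1}\|\vartheta_{k-1}-w_k\|.
\end{equation*}
The first summand is at most $\epsilon/6$ by the inequality in \eqref{u inclusion}. For the second, Lemma~\ref{Bound on SubLevel Set} gives $\|\vartheta_{k-1}\|\leq D$ (for $k=1$ this is because $\vartheta_0\in S$, so Remark~\ref{Assumption} applies directly) and $\|w_k\|\leq D+\epsilon/(3\delta_{k-1})$, so that
\begin{equation*}
\delta_{k-1}\|\vartheta_{k-1}-w_k\|\leq 2D\delta_{k-1}+\epsilon/3.
\end{equation*}
Adding the two bounds yields the key estimate $\|r_k\|\leq \epsilon/2+2D\delta_{k-1}$.

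Second, I would prove the lower bound on $\delta_{k-1}$ by contraposition. If the $k$-th iteration does not terminate in step~3, then $\|r_k\|>\epsilon$, and the key estimate forces $\delta_{k-1}>\epsilon/(4D)$, hence $\delta_k=\delta_{k-1}/2>\epsilon/(8D)$. Therefore every generated index $k\geq 2$ satisfies $\delta_{k-1}>\epsilon/(8D)\geq \min\{\delta_0,\epsilon/(8D)\}$, while the case $k=1$ is immediate from $\delta_0\geq \min\{\delta_0,\epsilon/(8D)\}$.

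Finally, since $\delta_{k-1}=\delta_0\,2^{-(k-1)}$, the key estimate combined with the stopping rule in step~3 shows that A-REG must halt at the first $k$ with $\delta_{k-1}\leq \epsilon/(4D)$, which occurs as soon as $k\geq \log_2(8D\delta_0/\epsilon)$. Thus the total number of iterations is at most $\lceil \log^{+}_1(8D\delta_0/\epsilon)\rceil$. I expect no serious obstacle: the only mildly delicate point is handling $k=1$ via $\vartheta_0\in S$, and the rest is a short contrapositive argument followed by a geometric series count.
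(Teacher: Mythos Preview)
Your proposal is correct and follows essentially the same approach as the paper: both derive the key estimate $\|r_k\|\le \epsilon/2 + 2D\delta_{k-1}$ from \eqref{def of rk}, \eqref{u inclusion}, and the bounds in Lemma~\ref{Bound on SubLevel Set}, then combine it with the halving rule to obtain both conclusions. The only minor differences are organizational (you phrase the lower bound via contraposition and explicitly handle the $k=1$ case for $\|\vartheta_0\|\le D$, while the paper argues directly that $\delta_{k-1}\le \epsilon/(4D)$ forces termination); you should also note, as the paper does, that the inclusion $r_k\in\nabla\psi_s(w_k)+\partial\psi_n(w_k)$ from \eqref{inclusion r} ensures the output pair indeed satisfies \eqref{psi acg problem}.
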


\begin{proof}
In view of the fact that the first iteration of A-REG is performed with $\delta_0$, the way $\delta_{k}$ is updated at the end of step 3 of A-REG, and the fact that for any iteration index $k$ generated by A-REG pair $(w_k,r_k)$ always satisfies inclusion \eqref{inclusion r}, to show both conclusions of the proposition it suffices to show that if the $k$-th iteration of A-REG is performed with $\delta_{k-1}\leq \epsilon/(4D)$, then $\|r_k\|\leq \epsilon$ and hence A-REG terminates in its $k$-th iteration. 

Hence, assume that the $k$-th iteration of A-REG is performed with $\delta_{k-1}\leq \epsilon/(4D)$. It then follows from this relation, the definition of $r_k$ in \eqref{def of rk}, triangle inequality, the inequality in \eqref{u inclusion}, and both relations in \eqref{sublevel bound} that
\begin{align*}
    \|r_k\|&\overset{\eqref{def of rk}}{\leq} \|u_k\|+\delta_{k-1}\|\vartheta_{k-1}-w_k\|\overset{\eqref{u inclusion}}{\leq} \frac{\epsilon}{6}+\delta_{k-1}\|\vartheta_{k-1}-w_k\|\\
    &\leq \frac{\epsilon}{6}+\delta_{k-1}\left(\|\vartheta_{k-1}\|+\|w_k\|\right)\overset{\eqref{sublevel bound}}{\leq} \frac{\epsilon}{6}+\delta_{k-1}\left(2D+\frac{\epsilon}{3\delta_{k-1}}\right)=\frac{\epsilon}{2}+2D\delta_{k-1}\leq \epsilon.
\end{align*}
It then follows from the above relation that A-REG terminates in step 3 of its $k$-th iteration with a pair $(w,r)=(w_k,r_k)$ satisfying \eqref{psi acg problem} and hence both conclusions of the proposition hold.
\end{proof}
We are now ready to prove Theorem~\ref{Complexity A-REG}.
\begin{proof}
It follows from the first relation in \eqref{VarTheta Psi} and an induction argument that $\psi(\vartheta_k)\leq \psi(\vartheta_0)$ for any iteration index $k \geq 1$ generated by A-REG. It then follows from this fact and the definitions of $\psi^{0}$ and $\mathcal C_{\delta}(\cdot)$ in \eqref{Important quantities} and \eqref{Psi Delta}, respectively, that $36 \mathcal C_{\delta_{k-1}}(\vartheta_{k-1})\leq (288 \psi^{0})/\delta_{k-1}$. It then follows from this relation, the definition of $\bar{\mathcal L^2}$ in \eqref{Important quantities}, the bounds on $\underbar N_k$ and $\delta_k$ in \eqref{Bound NK} and Proposition~\ref{PT Outer Iterations}, respectively, and Lemma~\ref{translation RPF-SFISTA}(c) that the call made to the RPF-SFISTA method during the $k$-th iteration of A-REG performs at most
\[
\left(\lceil \log^{+}_1\left(8D\delta_0/\epsilon\right) \rceil\left\lceil\log^{+}_12B\right\rceil\left[\sqrt{\frac{2\max\left\{\bar N_0,\bar L_{\delta_0}\right\}}{\min\left\{\delta_0, \epsilon/(8D)\right\}}}\log^+_1 \left(\frac{288\psi^{0}\bar{\mathcal L^2}}{\min\left\{\delta_0, \epsilon/(8D)\right\}\epsilon^2}\right)+\log(\bar L_{\delta_0}/\bar N_0)\right] \right)
\]
ACG iterations/resolvent evaluations. The result then immediately follows from this observation and the first conclusion of Proposition~\ref{PT Outer Iterations}.
\end{proof}

\section{Numerical Experiments} \label{sec:numerical}
This section benchmarks the numerical performance of RPF-SFISTA against four other state-of-the-art methods for solving four classes of strongly convex/convex composite optimization problems. It contains five subsections. The first subsection reports the numerical performance of all 5 methods on sparse logistic regression problems, the second one reports the performance of the methods on Lasso problems, while the third and fourth subsections report the numerical performance of all the methods on dense vector quadratic programs constrained to the simplex and box, respectively. The last subsection contains comments about the numerical results.

We now describe our implementation of RPF-SFISTA. First, the input parameters of RPF-SFISTA are chosen as
\[
\beta=1.25, \quad \chi=0.001, \quad \underbar M_1=10.
\]
Second, for each problem instance, the initial strong convexity estimate $\mu_0$ is always taken as
\[\mu_0=\frac{4\left[f(y_1)-\ell_{f}(y_{1};\tilde x_{0})\right]}{(1-\chi)\|y_{1}-\tilde x_{0}\|^2}\]
where $y_1$ and $\tilde x_0$ are generated at the end of step 3 of the first cycle of RPF-SFISTA. Third, each time RPF-SFISTA restarts, the next strong convexity estimate $\mu_l$ is taken to be $\mu_l=0.1\mu_{l-1}$.
Finally, for $l \geq 2$, $\underbar M_l$ is chosen as $0.4\bar M_{l-1}$.

RPF-SFISTA is bench-marked against the following four state-of-the-art algorithms. Specifically, we consider the FISTA method with backtracking presented in \cite{beck2009fast} (nicknamed FISTA-BT), the restarted method of \cite{Candes} (nicknamed FISTA-R), the RADA-FISTA method of \cite{GreedyFISTA} (nicknamed RA-FISTA), and the Greedy-FISTA method of \cite{GreedyFISTA} (nicknamed GR-FISTA). FISTA-R restarts based on a heuristic function value restarted scheme. RADA-FISTA and Greedy-FISTA are restarted FISTA methods that restart based on the heuristic gradient restarted scheme presented in \cite{Candes}.

Next, we describe the implementation details of the four algorithms which we compare RPF-SFISTA with. 
The implementation of FISTA-BT adaptively searches for a Lipschitz estimate $L_j$ by a procedure similar to step 2 of RPF-SFISTA. Specifically, it checks 
an inequality similar to \eqref{ineq check} with $\chi=0.001$ and if the inequality does not hold, it doubles its Lipschitz estimate $L_j$, re-generates its potential next iterate $y_j$, and checks the inequality again. Also, like RPF-SFISTA, FISTA-BT sets $10$ as its initial guess for the Lipschitz constant. FISTA-R
restarts FISTA-BT (as described above) if FISTA-BT finds a point $y_j$ with worse objective value than the previous point, i.e., if  $\phi(y_j)>\phi(y_{j-1})$. It also sets $10$ as an initial estimate for the Lipchitz constant. The implementations of RADA-FISTA and Greedy-FISTA are taken directly from the authors' Github repository, https://github.com/jliang993/Faster-FISTA. The input parameters of RADA-FISTA are chosen as
\[p=0.5, \quad q=0.5, \quad r=4, \quad \gamma=\frac{1}{\bar L}\]
where $\bar L$ is the global Lipschitz constant of the gradient of $f$. All input parameters are chosen so as to meet the required ranges presented in Algorithm 4.2 of \cite{GreedyFISTA}.
Greedy-FISTA only takes in a single input, a stepsize $\gamma$, which is chosen as $1.3/\bar L$. This more aggressive choice of stepsize is in the range suggested in the paragraph following Algorithm 4.3 in \cite{GreedyFISTA} and is the stepsize used in several of the experiments presented in the authors' Github repository. The code further has a safeguard which allows the stepsize to be decreased if a certain condition is satisfied.

We now describe the type of solution each of the methods aims to find. That is, given functions $f$ and $h$ satisfying assumptions (B1)-(B3) described in Section~\ref{sec:SCCO}, an initial point $z_0 \in \mathcal H$, and tolerance $\hat \epsilon>0$, each of the methods aims to find a pair $(z,v)$ satisfying:
\begin{equation}\label{solution type comp study}
v\in\nabla f(z)+\partial h(z), \quad \frac{\|v\|}{1+\|\nabla f(z_{0})\|}\leq \hat\epsilon
\end{equation}
where $\|\cdot\|$ signifies the Euclidean norm.

The tables below report the runtimes and the total number of ACG iterations/resolvent evaluations needed to find a pair $(z,v)$ satisfying \eqref{solution type comp study}. A tolerance of either $10^{-8}$ or $10^{-13}$ is set and a time limit of 7200 seconds (2 hours) is given. An entry of a table marked with $*/N$ means that the corresponding method finds an approximate solution with relative accuracy 
strictly larger than the desired accuracy in which case $N$
expresses the minimum relative accuracy that the method achieved within the time limit of 2 hours. The bold numbers in the tables of this section indicate the algorithm that performed the best for that particular metric (i.e. runtime or ACG iterations).

It will be seen from the numerical results presented in Subsections~\ref{logistic regression}, \ref{Lasso}, \ref{subsec:nonconvex_qpsimplex}, and \ref{subsec:strconvex_qpbox} that our method, RPF-SFISTA, was the fastest method and the only method able to find a solution of the desired accuracy (either $10^{-8}$ or $10^{-13}$) on every instance considered. To compare RPF-SFISTA and the second-best performing method on a particular problem class more closely,
we also report in each table caption the following average time ratio (ATR) between
the two methods defined as
\begin{equation}\label{speed comparison}
ATR=\frac{1}{N}\sum_{i=1}^{N}b_i/r_i,   
\end{equation}
where $N$ is the number of class instances that both methods were tested on and $b_i$ and $r_i$ are the runtimes of the second-best performing method and RPF-SFISTA for instance $i$, respectively. If a method did not finish within the time limit of 7200 seconds on a particular instance, then its runtime for that instance is conservatively recorded as 7200 seconds in the computation of the ATR metric.

All experiments were performed in MATLAB 2024a and run on a Macbook Pro with a Apple M3 Max chip and 128 GB of memory. 

\subsection{Sparse logistic regression}\label{logistic regression}

Consider the problem
\begin{align*}
\min_{z\in \mathbb R^{n}}\  & \left[f(z):=\sum_{i=1}^m \log \left(1+ \exp(-b_i\inner{a_i}{z} \right)\right]\\
\text{s.t.}\  & \|z\|_{1}\leq C
\end{align*}
where $a_i \in \mathbb R^{n}$ and $b_i \in\{-1,1\}$ for $i\in [m]$, and $C>0$ is a regularization parameter. For our experiments in \S~\ref{logistic regression}, we vary the dimension pair $(m,n)$ and $C$ is taken to be either $0$, $0.5$, or $1$. For each instance, the initial point $x_0$ is chosen to be a random point satisfying $\|x_0\|_{1}\leq C$. The Lipschitz constant $\bar L$ of $f$ is upper bounded by $0.25\lambda_{\max}(D^{T}D)$ where $D\in \mathbb R^{m\times n}$ satisfies $D_{ij}=-a_i^{j}b_i$ and $a_i^{j}$ denotes the $j$-th entry of $a_i$.

\begin{table}[!tbh]
\captionsetup{font=scriptsize}
\begin{centering}
\begin{tabular}{>{\centering}p{2.8cm}>{\centering}p{1.8cm}|>{\centering}p{1.8cm}>{\centering}p{1.8cm}>{\centering}p{1.8cm}>{\centering}p{1.8cm}>{\centering}p{1.8cm}}
\multicolumn{1}{c}{\textbf{\scriptsize{}Parameters}} & \multicolumn{1}{c|}{\textbf{\scriptsize{}Lipschitz}} & \multicolumn{5}{c}{\textbf{\scriptsize{}Iteration Count/Runtime (seconds)}}\tabularnewline
\hline 
{\scriptsize{}($m$,$n$,$C$)} & {\scriptsize{}$\bar L$} & {\scriptsize{}RPF-SFISTA} & {\scriptsize{}FISTA-BT} & {\scriptsize FISTA-R}& {\scriptsize{}RA-FISTA} & {\scriptsize{}GR-FISTA} \tabularnewline
\hline 
\hline 
{\scriptsize{(500, 50,000, 0.5)}} & {\scriptsize{$1.56*10^6$}} & {\scriptsize{\textbf{429/32.75}}} & {\scriptsize{12140/1013.03}} & {\scriptsize{5357/529.51}} & {\scriptsize{6483/463.16}} & {\scriptsize{6303/410.17}}\tabularnewline

{\scriptsize{(500, 50,000, 1)}} & {\scriptsize{$1.56*10^6$}} & \textbf{\scriptsize{802/62.23}} & {\scriptsize{85066/6958.55}} & {\scriptsize{21177/2065.41}} & {\scriptsize{7069/490.30}} & {\scriptsize{6474/410.03}} \tabularnewline

{\scriptsize{(500, 50,000, 2)}} & {\scriptsize{$1.56*10^6$}} & \textbf{\scriptsize{909/71.02}} & {\scriptsize{59479/4766.41}} & {\scriptsize{19469/1886.22}} & {\scriptsize{8445/589.76}} & {\scriptsize{7889/508.38}}\tabularnewline

\hline
{\scriptsize{(1000, 250,000, 0.5)}} & {\scriptsize{$1.56*10^7$}} & \textbf{\scriptsize{1110/739.98}} & {\scriptsize{*/6.51e-07}} & {\scriptsize{*/3.27e-08}} & {\scriptsize{*/4.91e-08}} & {\scriptsize{14898/6694.58}}\tabularnewline

{\scriptsize{(1000, 250,000, 1)}} & {\scriptsize{$1.56*10^7$}} & \textbf{\scriptsize{1375/943.30}} & {\scriptsize{*/2.67e-06}} & {\scriptsize{*/8.08e-07}} & {\scriptsize{*/2.30e-07}} & {\scriptsize{*/1.52e-07}}\tabularnewline

{\scriptsize{(1000, 250,000, 2)}} & {\scriptsize{$1.56*10^7$}} & \textbf{\scriptsize{1712/1199.78}} & {\scriptsize{*/2.71e-06}} & {\scriptsize{*/1.33e-07}} & {\scriptsize{*/2.38e-07}} & {\scriptsize{*/2.25e-07}}\tabularnewline

\hline

{\scriptsize{(300, 500,000, 0.5)}} & {\scriptsize{$9.39*10^{6}$}} & \textbf{\scriptsize{598/230.24}} & {\scriptsize{13383/5313.30}} & {\scriptsize{4355/2224.32}} & {\scriptsize{23569/7087.33}} & {\scriptsize{19019/5807.34}}
\tabularnewline

{\scriptsize{(300, 500,000, 1)}} & {\scriptsize{$9.39*10^{6}$}} & \textbf{\scriptsize{1421/561.68}} & {\scriptsize{*/8.47e-08}} & {\scriptsize{*/8.18e-08}} & {\scriptsize{*/1.47e-07}} &{\scriptsize{23125/6971.15}}
\tabularnewline

{\scriptsize{(300, 500,000, 2)}} & {\scriptsize{$9.39*10^{6}$}} & \textbf{\scriptsize{ 977/382.66}} & {\scriptsize{*/1.08e-06}} & {\scriptsize{*/4.47e-07}} &  {\scriptsize{*/2.64e-07}} & {\scriptsize{*/7.62e-08}}
\tabularnewline

\hline
{\scriptsize{(100, 1,000,000, 0.5)}} & {\scriptsize{$6.27*10^{6}$}} & \textbf{\scriptsize{ 588/178.35}} & {\scriptsize{6129/1938.83}} & {\scriptsize{2824/1126.19}} & {\scriptsize{*/4.70e-08}} & {\scriptsize{24554/6419.97}}
\tabularnewline

{\scriptsize{(100, 1,000,000, 1)}} & {\scriptsize{$6.27*10^{6}$}} & \textbf{\scriptsize{1389/431.63}} & {\scriptsize{*/7.74e-08}} & {\scriptsize{8845/3530.80}} & {\scriptsize{*/5.50e-07}} & {\scriptsize{*/2.09e-07}}
\tabularnewline

{\scriptsize{(100, 1,000,000, 2)}} & {\scriptsize{$6.27*10^{6}$}} & \textbf{\scriptsize{2099/673.07}} & {\scriptsize{*/1.93e-06}} & {\scriptsize{*/4.92e-07}} & {\scriptsize{*/5.29e-06}}  & {\scriptsize{*/3.98e-06}}
\end{tabular}
\par\end{centering}
\caption{\scriptsize Iteration counts and runtimes (in seconds) for the sparse logistic regression problem in
\S~\ref{logistic regression}. The tolerances are set to $10^{-8}$. Entries marked with * did not converge in the time limit of $7200$ seconds. The ATR metric is 14.06.\label{tab108:logL1}}
\end{table}

\subsection{Lasso}\label{Lasso}

Consider the problem
\begin{align*}
\min_{z\in \mathbb R^{n}}\  & \left[f(z):=\frac{1}{2}\|Az-b\|^2\right]\\
\text{s.t.}\  & \|z\|_{1}\leq C
\end{align*}
where $A \in \mathbb R^{m \times n}$ and $b \in \mathbb R^{m}$ and $C>0$ is a regularization parameter. For our experiments in \S~\ref{Lasso}, the matrix $A$ and the vector $b$ are taken from the linear programming test problems considered in the datasets `Meszaros' and `Mittelmann'. The regularization parameter, $C$ is taken to be either $1$, $5$, or $10$ and for each problem instance, the initial point $x_0$ is chosen to be a random point satisfying $\|x_0\|_{1}\leq C$. The Lipschitz constant $\bar L$ of the gradient of $f$ is just $\|A\|^2.$ The results comparing the five methods are presented in Tables~\ref{tab1013:Lasso} and \ref{tab1013:Lasso-2}, where the names of each problem instance considered from the Meszaros and Mittelmann datasets are also given.

\begin{table}[!tbh]
\captionsetup{font=scriptsize}
\begin{centering}
\begin{tabular}{>{\centering}p{2.25cm}>{\centering}p{1.2cm}>{\centering}p{1.2cm}|>{\centering}p{1.8cm}>{\centering}p{1.75cm}>{\centering}p{1.75cm}>{\centering}p{1.7cm}>{\centering}p{1.7cm}}
\multicolumn{1}{c}{\textbf{\scriptsize{}Parameters}} &  \multicolumn{1}{c}{\textbf{\scriptsize{}Lipschitz}}& \multicolumn{1}{c|}{\textbf{\scriptsize{}Name}} & \multicolumn{5}{c}{\textbf{\scriptsize{}Iteration Count/Runtime (seconds)}}\tabularnewline
\hline 
{\scriptsize{}($m$,$n$,$C$)} & {\scriptsize{}$\bar L$} & {\scriptsize{}} & {\scriptsize{}RPF-SFISTA} & {\scriptsize{}FISTA-BT} & {\scriptsize FISTA-R}& {\scriptsize{}RA-FISTA} & {\scriptsize{}GR-FISTA} \tabularnewline
\hline 
\hline 
{\scriptsize{(825, 8,627, 1)}}  & {\scriptsize{$1.55*10^3$}} & {\scriptsize{aa03}} & {\scriptsize{\textbf{226/0.28}}} & {\scriptsize{4888/4.86}} & {\scriptsize{4598/4.45}} & {\scriptsize{706/0.86}} & {\scriptsize{636/0.66}}\tabularnewline

{\scriptsize{(825, 8,627, 5)}}  & {\scriptsize{$1.55*10^3$}} & {\scriptsize{aa03}} & {\scriptsize{\textbf{722/0.72}}} & {\scriptsize{8482/8.07}} & {\scriptsize{8482/7.82}} & {\scriptsize{1322/1.44}} & {\scriptsize{1198/1.06}}\tabularnewline

{\scriptsize{(825, 8,627, 10)}}  & {\scriptsize{$1.55*10^3$}} & {\scriptsize{aa03}} & {\scriptsize{\textbf{472/0.52}}} & {\scriptsize{13132/13.04}} & {\scriptsize{9654/9.18}} & {\scriptsize{1352/1.47}} & {\scriptsize{1239/1.14}}\tabularnewline

\hline





{\scriptsize{(426, 7,195, 1)}}  & {\scriptsize{$1.73*10^3$}} & {\scriptsize{aa4}} & {\scriptsize{\textbf{348/0.29}}} & {\scriptsize{1522/1.27}} & {\scriptsize{1461/1.22}} & {\scriptsize{833/0.83}} & {\scriptsize{735/0.56}}\tabularnewline

{\scriptsize{(426, 7,195, 5)}} & {\scriptsize{$1.73*10^3$}} & {\scriptsize{aa4}} & {\scriptsize{\textbf{432/0.44}}} & {\scriptsize{6898/5.87}} & {\scriptsize{6898/5.48}} & {\scriptsize{1589/1.44}} & {\scriptsize{1605/1.30}}\tabularnewline

{\scriptsize{(426, 7,195, 10)}} & {\scriptsize{$1.73*10^3$}} & {\scriptsize{aa4}} & {\scriptsize{\textbf{735/0.64}}} & {\scriptsize{10377/10.03}} & {\scriptsize{8503/7.39}} & {\scriptsize{2289/2.14}} & {\scriptsize{2203/1.75}}\tabularnewline

\hline

{\scriptsize{(50, 6,774, 1)}} & {\scriptsize{$1.73*10^4$}} & {\scriptsize{air02}} & {\scriptsize{\textbf{370/0.34}}} & {\scriptsize{5223/4.32}} & {\scriptsize{3570/2.66}} & {\scriptsize{3482/3.32}} & {\scriptsize{3380/2.65}}\tabularnewline

{\scriptsize{(50, 6,774, 5)}} & {\scriptsize{$1.73*10^4$}} & {\scriptsize{air02}} & {\scriptsize{\textbf{3410/2.78}}} & {\scriptsize{125979/102.89}} & {\scriptsize{124619/87.70}} & {\scriptsize{28156/23.63}} & {\scriptsize{21579/15.82}}\tabularnewline

{\scriptsize{(50, 6,774, 10)}} & {\scriptsize{$1.73*10^4$}} & {\scriptsize{air02}} & {\scriptsize{\textbf{5078/2.59}}} & {\scriptsize{129536/57.52}} & {\scriptsize{131588/61.23}} & {\scriptsize{1551/1.07}} & {\scriptsize{1353/0.59}}\tabularnewline

\hline

{\scriptsize{(124, 10,757, 1)}} & {\scriptsize{$1.57*10^4$}} & {\scriptsize{air03}} & {\scriptsize{\textbf{627/11.80}}} & {\scriptsize{7138/127.32}} & {\scriptsize{5173/78.53}} & {\scriptsize{3211/42.12}} & {\scriptsize{3488/65.76}}\tabularnewline

{\scriptsize{(124, 10,757, 5)}} & {\scriptsize{$1.57*10^4$}} & {\scriptsize{air03}} & {\scriptsize{\textbf{1090/17.61}}} & {\scriptsize{41501/618.35}} & {\scriptsize{41501/605.35}} & {\scriptsize{9257/139.24}} & {\scriptsize{10359/219.91}}\tabularnewline

{\scriptsize{(124, 10,757, 10)}} & {\scriptsize{$1.57*10^4$}} & {\scriptsize{air03}} & {\scriptsize{\textbf{2996/70.14}}} & {\scriptsize{286189/4624.86}} & {\scriptsize{289853/4549.27}} & {\scriptsize{93958/1598.77}} & {\scriptsize{30278/552.48}}\tabularnewline

\hline

{\scriptsize{(823, 8,904, 1)}} & {\scriptsize{$1.50*10^3$}} & {\scriptsize{air04}} & {\scriptsize{\textbf{339/0.32}}} & {\scriptsize{5233/5.20}} & {\scriptsize{3139/3.13}} & {\scriptsize{1150/1.20}} & {\scriptsize{980/1.02}}\tabularnewline

{\scriptsize{(823, 8,904, 5)}} & {\scriptsize{$1.50*10^3$}} & {\scriptsize{air04}} & {\scriptsize{\textbf{342/0.36}}} & {\scriptsize{5575/5.59}} & {\scriptsize{5085/5.35}} & {\scriptsize{1163/1.26}} & {\scriptsize{977/1.01}}\tabularnewline

{\scriptsize{(823, 8,904, 10)}}  & {\scriptsize{$1.50*10^3$}} & {\scriptsize{air04}} & {\scriptsize{\textbf{903/0.86}}} & {\scriptsize{10956/10.02}} & {\scriptsize{9270/8.62}} & {\scriptsize{1257/1.39}} & {\scriptsize{1342/1.36}}\tabularnewline

\hline









{\scriptsize{(769, 2,561, 1)}}  & {\scriptsize{$1.54*10^3$}} & {\scriptsize{gen}} & {\scriptsize{\textbf{93/0.06}}} & {\scriptsize{450/0.19}} & {\scriptsize{323/0.15}} & {\scriptsize{331/0.15}} & {\scriptsize{254/0.11}}\tabularnewline

{\scriptsize{(769, 2,561, 5)}} & {\scriptsize{$1.54*10^3$}} & {\scriptsize{gen}} & {\scriptsize{\textbf{177/0.09}}} & {\scriptsize{1488/0.66}} & {\scriptsize{948/0.43}} & {\scriptsize{498/0.20}} & {\scriptsize{454/0.18}}\tabularnewline

{\scriptsize{(769, 2,561, 10)}}  & {\scriptsize{$1.54*10^3$}} & {\scriptsize{gen}} & {\scriptsize{\textbf{190/0.11}}} & {\scriptsize{2613/1.09}} & {\scriptsize{1491/0.69}} & {\scriptsize{689/0.30}} & {\scriptsize{591/0.27}}\tabularnewline

\hline

{\scriptsize{(163, 28,016, 1)}} & {\scriptsize{$5.64*10^4$}} & {\scriptsize{us04}} & {\scriptsize{\textbf{438/7.57}}} & {\scriptsize{11162/158.37}} & {\scriptsize{9858/135.20}} & {\scriptsize{5954/99.42}} & {\scriptsize{6357/104.38}}\tabularnewline

{\scriptsize{(163, 28,016, 5)}} & {\scriptsize{$5.64*10^4$}} & {\scriptsize{us04}} & {\scriptsize{\textbf{1119/21.39}}} & {\scriptsize{41307/644.84}} & {\scriptsize{33990/520.40}} & {\scriptsize{8530/135.05}} & {\scriptsize{9609/171.61}}\tabularnewline

{\scriptsize{(163, 28,016, 10)}} & {\scriptsize{$5.64*10^4$}} & {\scriptsize{us04}} & {\scriptsize{\textbf{1713/33.49}}} & {\scriptsize{422708/6284.62}} & {\scriptsize{446469/6230.74}} & {\scriptsize{14290/215.23}} & {\scriptsize{14280/205.82}}\tabularnewline

\hline

{\scriptsize{(520, 1,544, 1)}} & {\scriptsize{$3.10*10^4$}} & {\scriptsize{rosen1}} & {\scriptsize{{30/0.009}}} & {\scriptsize{30/0.007}} & {\scriptsize{30/0.006}} & {\scriptsize{31/0.007}} & {\scriptsize{\textbf{14/0.005}}}\tabularnewline

{\scriptsize{(520, 1,544, 5)}} & {\scriptsize{$3.10*10^4$}} & {\scriptsize{rosen1}} & {\scriptsize{\textbf{87/0.02}}} & {\scriptsize{151/0.02}} & {\scriptsize{128/0.02}} & {\scriptsize{151/0.03}} & {\scriptsize{{108/0.02}}}\tabularnewline

{\scriptsize{(520, 1,544, 10)}} & {\scriptsize{$3.10*10^4$}} & {\scriptsize{rosen1}} & {\scriptsize{\textbf{101/0.02}}} & {\scriptsize{288/0.05}} & {\scriptsize{205/0.04}} & {\scriptsize{231/0.05}} & {\scriptsize{{181/0.03}}}\tabularnewline

\hline

{\scriptsize{(2,056, 6,152, 1)}} & {\scriptsize{$1.17*10^5$}} & {\scriptsize{rosen10}} & {\scriptsize{{29/0.02}}} & {\scriptsize{29/0.02}} & {\scriptsize{29/0.02}} & {\scriptsize{58/0.04}} & {\scriptsize{\textbf{20/0.02}}}\tabularnewline

{\scriptsize{(2,056, 6,152, 5)}} & {\scriptsize{$1.17*10^5$}} & {\scriptsize{rosen10}} & {\scriptsize{\textbf{163/0.13}}} & {\scriptsize{624/0.39}} & {\scriptsize{363/0.22}} & {\scriptsize{449/0.31}} & {\scriptsize{388/0.26}}\tabularnewline

{\scriptsize{(2,056, 6,152, 10)}} & {\scriptsize{$1.17*10^5$}} & {\scriptsize{rosen10}} & {\scriptsize{\textbf{209/0.16}}} & {\scriptsize{2491/1.62}} & {\scriptsize{1172/0.78}} & {\scriptsize{535/0.40}} & {\scriptsize{421/0.25}}\tabularnewline

\hline

{\scriptsize{(135, 6,469, 1)}} & {\scriptsize{$4.93*10^3$}} & {\scriptsize{crew1}} & {\scriptsize{\textbf{477/0.28}}} & {\scriptsize{9519/5.31}} & {\scriptsize{8573/5.12}} & {\scriptsize{2689/1.57}} & {\scriptsize{2821/1.67}}\tabularnewline

{\scriptsize{(135, 6,469, 5)}} & {\scriptsize{$4.93*10^3$}} & {\scriptsize{crew1}} & {\scriptsize{\textbf{1790/1.25}}} & {\scriptsize{44385/34.44}} & {\scriptsize{31153/26.13}} & {\scriptsize{4655/3.65}} & {\scriptsize{5529/4.11}}\tabularnewline

{\scriptsize{(135, 6,469, 10)}} & {\scriptsize{$4.93*10^3$}} & {\scriptsize{crew1}} & {\scriptsize{\textbf{2362/1.63}}} & {\scriptsize{1000023/718.28}} & {\scriptsize{1000023/764.45}} & {\scriptsize{9093/6.88}} & {\scriptsize{9473/6.92}}\tabularnewline





\end{tabular}
\par\end{centering}
\caption{\scriptsize Iteration counts and runtimes (in seconds) for the Lasso problem in
\S~\ref{Lasso}. The tolerances are set to $10^{-13}$. Entries marked with * did not converge in the time limit of $7200$ seconds. The ATR metric is 3.87.\label{tab1013:Lasso}}
\end{table}

\begin{table}[!tbh]
\captionsetup{font=scriptsize}
\begin{centering}
\begin{tabular}{>{\centering}p{2.8cm}>{\centering}p{1.13cm}>{\centering}p{1.13cm}|>{\centering}p{1.768cm}>{\centering}p{1.73cm}>{\centering}p{1.7cm}>{\centering}p{1.6cm}>{\centering}p{1.55cm}}
\multicolumn{1}{c}{\textbf{\scriptsize{}Parameters}} & \multicolumn{1}{c}{\textbf{\scriptsize{}Lipschitz}}& \multicolumn{1}{c|}{\textbf{\scriptsize{}Name}} & \multicolumn{5}{c}{\textbf{\scriptsize{}Iteration Count/Runtime (seconds)}}\tabularnewline
\hline 
{\scriptsize{}($m$,$n$,$C$)} & {\scriptsize{}$\bar L$} & {\scriptsize{}} & {\scriptsize{}RPF-SFISTA} & {\scriptsize{}FISTA-BT} & {\scriptsize FISTA-R}& {\scriptsize{}RA-FISTA} & {\scriptsize{}GR-FISTA} \tabularnewline
\hline 
\hline

{\scriptsize{(905, 1,513, 1)}} & {\scriptsize{$2.14*10^5$}} & {\scriptsize{cr42}} & {\scriptsize{{161/0.05}}} & {\scriptsize{451/0.07}} & {\scriptsize{408/0.07}} & {\scriptsize{81/0.01}} & {\scriptsize{\textbf{83/0.01}}}\tabularnewline

{\scriptsize{(905, 1,513, 5)}} & {\scriptsize{$2.14*10^5$}} &  {\scriptsize{cr42}} & {\scriptsize{{765/0.15}}} & {\scriptsize{11628/1.89}} & {\scriptsize{4503/0.74}} & {\scriptsize{627/0.09}} & {\scriptsize{\textbf{395/0.06}}}\tabularnewline

{\scriptsize{(905, 1,513, 10)}} & {\scriptsize{$2.14*10^5$}} &  {\scriptsize{cr42}} & {\scriptsize{{8180/1.47}}} & {\scriptsize{197048/39.93}} & {\scriptsize{39202/8.13}} & {\scriptsize{4027/0.77}} & {\scriptsize{\textbf{3465/0.62}}}\tabularnewline

\hline

{\scriptsize{(71, 36,699, 1)}} & {\scriptsize{$2.49*10^4$}} &  {\scriptsize{kl02}} & {\scriptsize{\textbf{2585/75.80}}} & {\scriptsize{8633/188.86}} & {\scriptsize{8633/192.25}} & {\scriptsize{5812/139.33}} & {\scriptsize{5966/138.34}}\tabularnewline

{\scriptsize{(71, 36,699, 5)}} & {\scriptsize{$2.49*10^4$}} &  {\scriptsize{kl02}} & {\scriptsize{{659/21.54}}} & {\scriptsize{1574/39.99}} & {\scriptsize{*/2.99e-07}} & {\scriptsize{459/10.27}} & {\scriptsize{\textbf{412/8.59}}}\tabularnewline

{\scriptsize{(71, 36,699, 10)}} & {\scriptsize{$2.49*10^4$}} &  {\scriptsize{kl02}} & {\scriptsize{{500/11.22}}} & {\scriptsize{1571/27.56}} & {\scriptsize{811/22.23}} & {\scriptsize{374/6.49}} & {\scriptsize{\textbf{345/6.30}}}\tabularnewline

\hline

{\scriptsize{(664, 46,915, 1)}} & {\scriptsize{$1.58*10^4$}} &  {\scriptsize{t0331-4l}} & {\scriptsize{\textbf{499/10.67}}} & {\scriptsize{4742/94.66}} & {\scriptsize{3291/85.86}} & {\scriptsize{1988/43.69}} & {\scriptsize{1963/35.04}}\tabularnewline

{\scriptsize{(664, 46,915, 5)}} & {\scriptsize{$1.58*10^4$}} &  {\scriptsize{t0331-4l}} & {\scriptsize{\textbf{854/20.36}}} & {\scriptsize{22202/440.41}} & {\scriptsize{15215/391.61}} & {\scriptsize{3726/87.56}} & {\scriptsize{4589/80.62}}\tabularnewline

{\scriptsize{(664, 46,915, 10)}} & {\scriptsize{$1.58*10^4$}} &  {\scriptsize{t0331-4l}} & {\scriptsize{\textbf{598/13.81}}} & {\scriptsize{115672/2826.81}} & {\scriptsize{91697/2846.42}} & {\scriptsize{3959/82.91}} & {\scriptsize{3846/77.28}}\tabularnewline

\hline

{\scriptsize{(507, 63,516, 1)}} & {\scriptsize{$2.23*10^4$}} &  {\scriptsize{rail507}} & {\scriptsize{\textbf{441/9.45}}} & {\scriptsize{16380/322.10}} & {\scriptsize{14546/379.22}} & {\scriptsize{3871/89.02}} & {\scriptsize{3944/79.39}}\tabularnewline

{\scriptsize{(507, 63,516, 5)}} & {\scriptsize{$2.23*10^4$}} &  {\scriptsize{rail507}} & {\scriptsize{\textbf{633/16.98}}} & {\scriptsize{33798/946.03}} & {\scriptsize{25542/801.14}} & {\scriptsize{5445/150.25}} & {\scriptsize{5724/155.04}}\tabularnewline

{\scriptsize{(507, 63,516, 10)}} & {\scriptsize{$2.23*10^4$}} &  {\scriptsize{rail507}} & {\scriptsize{\textbf{1081/26.66}}} & {\scriptsize{54412/1198.24}} & {\scriptsize{42622/1202.48}} & {\scriptsize{6682/167.30}} & {\scriptsize{7649/179.36}}\tabularnewline

\hline

{\scriptsize{(516, 47,827, 1)}} & {\scriptsize{$2.07*10^4$}} &  {\scriptsize{rail516}} & {\scriptsize{\textbf{1132/24.25}}} & {\scriptsize{5795/96.74}} & {\scriptsize{5795/131.08}} & {\scriptsize{3890/76.33}} & {\scriptsize{3719/91.63}}\tabularnewline

{\scriptsize{(516, 47,827, 5)}} & {\scriptsize{$2.07*10^4$}} &  {\scriptsize{rail516}} & {\scriptsize{\textbf{615/14.33}}} & {\scriptsize{16843/318.72}} & {\scriptsize{12423/295.14}} & {\scriptsize{5053/129.64}} & {\scriptsize{5188/154.17}}\tabularnewline

{\scriptsize{(516, 47,827, 10)}} & {\scriptsize{$2.07*10^4$}} &  {\scriptsize{rail516}} & {\scriptsize{\textbf{1053/26.65}}} & {\scriptsize{25057/563.01}} & {\scriptsize{25057/720.52}} & {\scriptsize{6155/163.21}} & {\scriptsize{5688/155.14}}\tabularnewline

\hline

{\scriptsize{(582, 56,097, 1)}} & {\scriptsize{$3.46*10^4$}} &  {\scriptsize{rail582}} & {\scriptsize{\textbf{656/16.51}}} & {\scriptsize{10000/263.01}} & {\scriptsize{10000/327.45}} & {\scriptsize{7427/196.50}} & {\scriptsize{7139/218.76}}\tabularnewline

{\scriptsize{(582, 56,097, 5)}} & {\scriptsize{$3.46*10^4$}} &  {\scriptsize{rail582}} & {\scriptsize{\textbf{518/14.02}}} & {\scriptsize{*/4.06e-12}} & {\scriptsize{*/5.35e-12}} & {\scriptsize{7943/210.25}} & {\scriptsize{7978/251.48}}\tabularnewline

{\scriptsize{(582, 56,097, 10)}} & {\scriptsize{$3.46*10^4$}} &  {\scriptsize{rail582}} & {\scriptsize{\textbf{1255/36.54}}} & {\scriptsize{*/4.23e-11}} & {\scriptsize{*/2.60e-11}} & {\scriptsize{8036/195.24}} & {\scriptsize{8278/248.95}}\tabularnewline

\hline 

{\scriptsize{(2,586, 923,269, 1)}} & {\scriptsize{$2.46*10^{5}$}} &  {\scriptsize{rail2586}} & {\scriptsize{\textbf{3254/343.08}}} & {\scriptsize{*/5.98e-11}} & {\scriptsize{*/8.33e-11}} & {\scriptsize{24725/2631.23}} & {\scriptsize{25618/2729.25}}\tabularnewline

{\scriptsize{(2,586, 923,269, 5)}} & {\scriptsize{$2.46*10^{5}$}} &  {\scriptsize{rail2586}} & {\scriptsize{\textbf{2039/203.52}}} & {\scriptsize{*/2.21e-10}} & {\scriptsize{*/2.42e-10}} & {\scriptsize{*/6.14e-11}} & {\scriptsize{26307/2614.78}}\tabularnewline

{\scriptsize{(2,586, 923,269, 10)}} & {\scriptsize{$2.46*10^{5}$}} &  {\scriptsize{rail2586}} & {\scriptsize{\textbf{1843/163.57}}} & {\scriptsize{*/1.07e-09}} & {\scriptsize{*/1.22e-09}} & {\scriptsize{26475/2630.74}} & {\scriptsize{31177/3194.57}}\tabularnewline

\hline 

{\scriptsize{(4,284, 1,096,894, 1)}} & {\scriptsize{$1.60*10^{5}$}} &  {\scriptsize{rail4284}} & {\scriptsize{\textbf{4138/523.35}}} & {\scriptsize{*/1.61e-11}} & {\scriptsize{*/1.62e-11}} & {\scriptsize{30633/3974.30}} & {\scriptsize{30391/3909.32}}\tabularnewline

{\scriptsize{(4,284, 1,096,894, 5)}} & {\scriptsize{$1.60*10^{5}$}} &  {\scriptsize{rail4284}} & {\scriptsize{\textbf{3852/454.55}}} & {\scriptsize{*/5.05e-11}} & {\scriptsize{*/5.66e-11}} & {\scriptsize{*/7.70e-08}} & {\scriptsize{*/7.53e-08}}\tabularnewline

{\scriptsize{(4,284, 1,096,894, 10)}} & {\scriptsize{$1.60*10^{5}$}} &  {\scriptsize{rail4284}} & {\scriptsize{\textbf{2970/340.49}}} & {\scriptsize{*/1.06e-09}} & {\scriptsize{*/1.15e-09}} & {\scriptsize{*/6.86e-11}} & {\scriptsize{31325/3394.41}}\tabularnewline

\hline

{\scriptsize{(73, 123,409, 1)}} & {\scriptsize{$2.56*10^{5}$}} &  {\scriptsize{nw14}} & {\scriptsize{\textbf{793/28.11}}} & {\scriptsize{4828/167.18}} & {\scriptsize{3416/114.27}} & {\scriptsize{4718/202.63}} & {\scriptsize{4527/178.69}}\tabularnewline

{\scriptsize{(73, 123,409, 5)}} & {\scriptsize{$2.56*10^{5}$}} &  {\scriptsize{nw14}} & {\scriptsize{\textbf{4753/186.80}}} & {\scriptsize{96469/3317.70}} & {\scriptsize{80492/2513.14}} & {\scriptsize{10451/341.44}} & {\scriptsize{9860/310.00}}\tabularnewline

{\scriptsize{(73, 123,409, 10)}} & {\scriptsize{$2.56*10^{5}$}} &  {\scriptsize{nw14}} & {\scriptsize{\textbf{2460/92.08}}} & {\scriptsize{120336/3364.65}} & {\scriptsize{86166/2366.60}} & {\scriptsize{18593/604.72}} & {\scriptsize{18294/593.66}}\tabularnewline

\hline

{\scriptsize{(27,441, 30,733, 1)}} & {\scriptsize{$3.6*10^{11}$}} &  {\scriptsize{baxter}} & {\scriptsize{\textbf{2431/35.26}}} & {\scriptsize{4671/70.73}} & {\scriptsize{4276/65.40}} & {\scriptsize{5098/67.13}} & {\scriptsize{4339/44.83}}\tabularnewline

{\scriptsize{(27,441, 30,733, 5)}} & {\scriptsize{$3.6*10^{11}$}} &  {\scriptsize{baxter}} & {\scriptsize{{11805/140.86}}} & {\scriptsize{12352/186.97}} & {\scriptsize{9150/134.82}} & {\scriptsize{11976/152.81}} & {\scriptsize{\textbf{8329/82.57}}}\tabularnewline

{\scriptsize{(27,441, 30,733, 10)}} & {\scriptsize{$3.6*10^{11}$}} &  {\scriptsize{baxter}} & {\scriptsize{{20929/240.32}}} & {\scriptsize{21569/322.11}} & {\scriptsize{19936/307.04}} & {\scriptsize{16833/210.20}} & {\scriptsize{\textbf{12231/122.60}}}\tabularnewline

\end{tabular}
\par\end{centering}
\caption{\scriptsize Iteration counts and runtimes (in seconds) for the Lasso Problem in
\S~\ref{Lasso}. The tolerances are set to $10^{-13}$. Entries marked with * did not converge in the time limit of $7200$ seconds. The ATR metric is 6.32.\label{tab1013:Lasso-2}}
\end{table}

\subsection{Dense QP with Simplex Constraints}\label{subsec:nonconvex_qpsimplex}
Given a pair of dimensions $(m,n)\in\mathbb{N}^{2}$, a scalar
pair $(\tau_1, \tau_2)\in\R_{++}^{2}$, matrices $B\in\r^{n\times n}$ and $C\in\R^{m\times n}$, positive diagonal matrix $D\in\R^{n\times n}$,
and a vector $d\in\R^{m}$, this subsection considers
the problem 
\begin{align*}
\min_{z}\  & \left[f(z):=\frac{\tau_{1}}{2}\|DBz\|^{2}+\frac{\tau_{2}}{2}\|{ C}z-d\|^{2}\right]\\
\text{s.t.}\  & z\in\Delta^{n},
\end{align*}
where $\Delta^{n}:=\{x\in\ \Re_+^{n}:\sum_{i=1}^{n}x_{i}=1\}$.
For our experiments in \S~\ref{subsec:nonconvex_qpsimplex}, we vary the  dimensions and generate the matrices $B$ and $C$ to be fully dense. The entries
of $B$, $C$, and $d$ (resp. $D$) are generated by sampling
from the uniform distribution ${\cal U}[0,1]$ (resp. ${\cal U}[1,\alpha]$) where the parameter $\alpha\geq 1$ is varied. The initial starting point $x_{0}$
is generated as $\hat x/\sum_{i=1}^{n}\hat x_{i}$, where the
entries of $\hat x$ are sampled from the ${\cal U}[0,1]$ distribution. Finally, we choose $(\tau_1, \tau_2)\in\R_{++}^{2}$ so that $\bar L=\lam_{\max}(\nabla^{2}f)$
and $\bar \mu=\lam_{\min}(\nabla^{2}f)$ are the various values given in the tables of this subsection. The results comparing the five methods are presented in Tables~\ref{tab108:qpsimplex} and \ref{tab1013:qpsimplex}. Table~\ref{tab108:qpsimplex} and Table~\ref{tab1013:qpsimplex} present the performance of all five methods on the same exact 12 instances, but for target accuracies $\epsilon=10^{-8}$ and $\epsilon=10^{-13}$, respectively.

\begin{table}[!tbh]
\captionsetup{font=scriptsize}
\begin{centering}
\begin{tabular}{>{\centering}p{1.75cm}>{\centering}p{1.8cm}|>{\centering}p{1.8cm}>{\centering}p{1.8cm}>{\centering}p{1.8cm}>{\centering}p{1.8cm}>{\centering}p{1.8cm}}
\multicolumn{1}{c}{\textbf{\scriptsize{}Dimensions}} & \multicolumn{1}{c|}{\textbf{\scriptsize{}Curvatures}} & \multicolumn{5}{c}{\textbf{\scriptsize{}Iteration Count/Runtime (seconds)}}\tabularnewline
\hline 
{\scriptsize{}($m$,$n$)} & {\scriptsize{}($\bar \mu$,$\bar L$)} & {\scriptsize{}RPF-SFISTA} & {\scriptsize{}FISTA-BT} & {\scriptsize FISTA-R}& {\scriptsize{}RA-FISTA} & {\scriptsize{}GR-FISTA} \tabularnewline
\hline 
\hline 
{\scriptsize{(1000, 5000)}} & {\scriptsize{$(10^{-8},10^2)$}} & {\scriptsize{\textbf{760/1261.82}}} & {\scriptsize{*/1.13e-07}} & {\scriptsize{*/8.91e-08}} & {\scriptsize{2385/3050.66}} & {\scriptsize{1929/2460.67}}\tabularnewline

{\scriptsize{(1000, 5000)}} & {\scriptsize{$(10^{-6},10^2)$}} & \textbf{\scriptsize{322/536.82}} & {\scriptsize{3201/4345.55}} & {\scriptsize{3201/4529.81}} & {\scriptsize{2291/2924.71}} & {\scriptsize{2163/2781.60}} \tabularnewline

{\scriptsize{(1000, 5000)}} & {\scriptsize{$(10^{-4},10^3)$}} & \textbf{\scriptsize{140/224.95}} & {\scriptsize{619/886.74}} & {\scriptsize{619/886.54}} & {\scriptsize{2090/2685.77}} & {\scriptsize{1714/2206.25}}\tabularnewline

{\scriptsize{(1000, 5000)}} & {\scriptsize{$(10^{-6},10^{3})$}} & \textbf{\scriptsize{113/181.25}} & {\scriptsize{623/991.30}} & {\scriptsize{623/854.16}} & {\scriptsize{2569/3258.33}} & {\scriptsize{2593/3509.69}}\tabularnewline

{\scriptsize{(1000, 5000)}} & {\scriptsize{$(10^{-7},10^{4})$}} & \textbf{\scriptsize{88/141.95}} & {\scriptsize{357/585.77}} & {\scriptsize{357/534.64}} & {\scriptsize{2840/3649.55}} & {\scriptsize{2719/3512.51}}\tabularnewline

{\scriptsize{(1000, 5000)}} & {\scriptsize{$(10^{-4},10^{6})$}} & \textbf{\scriptsize{72/108.84}} & {\scriptsize{131/197.84}} & {\scriptsize{102/151.87}} & {\scriptsize{3231/4136.42}} & {\scriptsize{2735/3483.86}}\tabularnewline

\hline

{\scriptsize{(2000, 10000)}} & {\scriptsize{$(10^{-4},10^{4})$}} & \textbf{\scriptsize{120/1162.83}} & {\scriptsize{249/2413.96}} & {\scriptsize{249/2549.80}} & {\scriptsize{*/7.41e-05}} & {\scriptsize{*/2.67e-05}}
\tabularnewline

{\scriptsize{(2000, 10000)}} & {\scriptsize{$(10^{-4},10^{4})$}} & \textbf{\scriptsize{69/694.64}} & {\scriptsize{128/1257.82}} & {\scriptsize{83/823.70}} & {\scriptsize{*/7.91e-05}} &{\scriptsize{*/2.77e-05}}
\tabularnewline

{\scriptsize{(2000, 10000)}} & {\scriptsize{$(10^{-4},10^{4})$}} & \textbf{\scriptsize{75/735.75}} & {\scriptsize{131/1287.77}} & {\scriptsize{97/965.95}} &  {\scriptsize{*/3.20e-02}} & {\scriptsize{*/2.76e-03}}
\tabularnewline

{\scriptsize{(2000, 10000)}} & {\scriptsize{$(10^{-4},10^{6})$}} & \textbf{\scriptsize{57/540.71}} & {\scriptsize{89/820.29}} & {\scriptsize{74/716.82}} & {\scriptsize{*/3.93e-03}} & {\scriptsize{*/2.27e-03}}
\tabularnewline

{\scriptsize{(2000, 10000)}} & {\scriptsize{$(10^{-4},10^{3})$}} & \textbf{\scriptsize{191/1937.20}} & {\scriptsize{685/6795.16}} & {\scriptsize{685/6895.35}} & {\scriptsize{*/6.57e-05}} & {\scriptsize{*/2.93e-05}}
\tabularnewline

{\scriptsize{(2000, 10000)}} & {\scriptsize{$(10^{-4},10^{4})$}} & \textbf{\scriptsize{92/941.71}} & {\scriptsize{249/2406.88}} & {\scriptsize{156/1563.07}} & {\scriptsize{*/3.51e-03}}  & {\scriptsize{*/1.14e-03}}
\end{tabular}
\par\end{centering}
\caption{\scriptsize Iteration counts and runtimes (in seconds) for the Vector QP with Simplex Constraints
in \S~\ref{subsec:nonconvex_qpsimplex}. The tolerances are set to $10^{-8}$. Entries marked with * did not converge in the time limit of $7200$ seconds. The ATR metric is 3.27.\label{tab108:qpsimplex}}
\end{table}

\begin{table}[!tbh]
\captionsetup{font=scriptsize}
\begin{centering}
\begin{tabular}{>{\centering}p{1.75cm}>{\centering}p{1.8cm}|>{\centering}p{1.8cm}>{\centering}p{1.8cm}>{\centering}p{1.8cm}>{\centering}p{1.8cm}>{\centering}p{1.8cm}}
\multicolumn{1}{c}{\textbf{\scriptsize{}Dimensions}} & \multicolumn{1}{c|}{\textbf{\scriptsize{}Curvatures}} & \multicolumn{5}{c}{\textbf{\scriptsize{}Iteration Count/Runtime (seconds)}}\tabularnewline
\hline 
{\scriptsize{}($m$,$n$)} & {\scriptsize{}($\bar \mu$,$\bar L$)} & {\scriptsize{}RPF-SFISTA} & {\scriptsize{}FISTA-BT} & {\scriptsize FISTA-R}& {\scriptsize{}RA-FISTA} & {\scriptsize{}GR-FISTA} \tabularnewline
\hline 
\hline 
{\scriptsize{(1000, 5000)}} & {\scriptsize{$(10^{-8},10^2)$}} & {\scriptsize{\textbf{1335/1900.39}}} & {\scriptsize{*/7.91e-08}} & {\scriptsize{*/1.02e-07}} & {\scriptsize{4009/5030.54}} & {\scriptsize{3591/4570.78}}\tabularnewline

{\scriptsize{(1000, 5000)}} & {\scriptsize{$(10^{-6},10^2)$}} & \textbf{\scriptsize{533/847.84}} & {\scriptsize{*/6.08e-10}} & {\scriptsize{*/1.10e-09}} & {\scriptsize{4037/5120.40}} & {\scriptsize{3627/4674.44}} \tabularnewline

{\scriptsize{(1000, 5000)}} & {\scriptsize{$(10^{-4},10^3)$}} & \textbf{\scriptsize{221/368.37}} & {\scriptsize{1971/2643.61}} & {\scriptsize{1971/2807.98}} & {\scriptsize{3860/4904.65}} & {\scriptsize{3320/4256.82}}\tabularnewline

{\scriptsize{(1000, 5000)}} & {\scriptsize{$(10^{-6},10^{3})$}} & \textbf{\scriptsize{169/292.13}} & {\scriptsize{1702/2445.07}} & {\scriptsize{1702/2530.27}} & {\scriptsize{4606/5894.16}} & {\scriptsize{4000/5128.65}}\tabularnewline

{\scriptsize{(1000, 5000)}} & {\scriptsize{$(10^{-7},10^{4})$}} & \textbf{\scriptsize{130/226.82}} & {\scriptsize{902/1473.35}} & {\scriptsize{902/1431.87}} & {\scriptsize{4704/6004.52}} & {\scriptsize{4187/5341.55}}\tabularnewline

{\scriptsize{(1000, 5000)}} & {\scriptsize{$(10^{-4},10^{6})$}} & \textbf{\scriptsize{100/157.56}} & {\scriptsize{274/457.90}} & {\scriptsize{248/377.36}} & {\scriptsize{4798/6149.02}} & {\scriptsize{4193/5376.22}}\tabularnewline

\hline

{\scriptsize{(2000, 10000)}} & {\scriptsize{$(10^{-4},10^{4})$}} & \textbf{\scriptsize{178/1808.51}} & {\scriptsize{*/8.52e-13}} & {\scriptsize{*/9.83e-13}} & {\scriptsize{*/7.79e-05}} & {\scriptsize{*/2.53e-05}}
\tabularnewline

{\scriptsize{(2000, 10000)}} & {\scriptsize{$(10^{-4},10^{4})$}} & \textbf{\scriptsize{133/1336.16}} & {\scriptsize{394/3951.58}} & {\scriptsize{335/3365.69}} & {\scriptsize{*/2.34e-04}} &{\scriptsize{*/2.96e-05}}
\tabularnewline

{\scriptsize{(2000, 10000)}} & {\scriptsize{$(10^{-4},10^{4})$}} & \textbf{\scriptsize{99/995.16}} & {\scriptsize{310/3114.84}} & {\scriptsize{273/2759.90}} &  {\scriptsize{*/4.56e-03}} & {\scriptsize{*/2.74e-03}}
\tabularnewline

{\scriptsize{(2000, 10000)}} & {\scriptsize{$(10^{-4},10^{6})$}} & \textbf{\scriptsize{96/907.03}} & {\scriptsize{191/1849.46}} & {\scriptsize{178/1705.75}} & {\scriptsize{*/4.13e-03}} & {\scriptsize{*/2.25e-03}}
\tabularnewline

{\scriptsize{(2000, 10000)}} & {\scriptsize{$(10^{-4},10^{3})$}} & \textbf{\scriptsize{301/2996.59}} & {\scriptsize{*/8.09e-09}} & {\scriptsize{*/6.79e-09}} & {\scriptsize{*/8.50e-04}} & {\scriptsize{*/2.78e-05}}
\tabularnewline

{\scriptsize{(2000, 10000)}} & {\scriptsize{$(10^{-4},10^{4})$}} & \textbf{\scriptsize{142/1416.76}} & {\scriptsize{*/3.29e-13}} & {\scriptsize{601/6022.82}} & {\scriptsize{*/2.34e-04}}  & {\scriptsize{*/9.04e-05}}
\end{tabular}
\par\end{centering}
\caption{\scriptsize Iteration counts and runtimes (in seconds) for the Vector QP problem with Simplex Constraints in
\S~\ref{subsec:nonconvex_qpsimplex}. The tolerances are set to $10^{-13}$. Entries marked with * did not converge in the time limit of $7200$ seconds. The ATR metric is 4.59.\label{tab1013:qpsimplex}}
\end{table}

\subsection{Dense QP with Box Constraints}\label{subsec:strconvex_qpbox}
Given a pair of dimensions $(m,n)\in\mathbb{N}^{2}$, a scalar
triple $(r,\tau_1, \tau_2)\in\R_{++}^{3}$, matrices $B\in\r^{n\times n}$ and $C\in\R^{m\times n}$, positive diagonal matrix $D\in\R^{n\times n}$,
a vector pair $(a,d)\in\R^{n}\times\R^{m}$, and a scalar $b \in \R$, this subsection considers
the problem 
\begin{align*}
\min_{z}\  & \left[f(z):=\frac{\tau_{1}}{2}\|DBz\|^{2}+\frac{\tau_{2}}{2}\|{ C}z-d\|^{2}\right]\\
\text{s.t.}\ 
& a^{T}z=b\\
& -r\leq z_i \leq r, \quad i\in\{1,...,n\}.
\end{align*}

For our experiments in \S~\ref{subsec:strconvex_qpbox}, we vary the dimensions $(m,n)$ and generate the matrices $B$ and $C$ to be fully dense. The entries
of $B$, $C$, and $d$ (resp. $D$) are generated by sampling
from the uniform distribution ${\cal U}[0,1]$ (resp. ${\cal U}[1,1000]$). The vector $a$ is varied to be either the vector that takes value $-1$ in its last component and value $1$ in all its other components or the vector that takes value $-1$ in its last 10 components and value $1$ in all its other components. The scalars $b$ and $r$ are taken to be $0$ and $5$, respectively. The initial starting point $x_{0}$
is generated as a random vector in ${\cal U}[-r,r]^{n}$. Finally, we choose $(\tau_1, \tau_2)\in\R_{++}^{2}$ so that $\bar L=\lam_{\max}(\nabla^{2}f)$
and $\bar \mu=\lam_{\min}(\nabla^{2}f)$ are the various values given in the tables of this subsection. Table~\ref{tab108:qpbox} and Table~\ref{tab1013:qpbox} present the performance of all five methods on the same exact 12 instances, but for target accuracies $\epsilon=10^{-8}$ and $\epsilon=10^{-13}$, respectively.

\begin{table}[!tbh]
\captionsetup{font=scriptsize}
\begin{centering}
\begin{tabular}{>{\centering}p{1.75cm}>{\centering}p{1.8cm}|>{\centering}p{1.8cm}>{\centering}p{1.8cm}>{\centering}p{1.8cm}>{\centering}p{1.8cm}>{\centering}p{1.8cm}}
\multicolumn{1}{c}{\textbf{\scriptsize{}Dimensions}} & \multicolumn{1}{c|}{\textbf{\scriptsize{}Curvatures}} & \multicolumn{5}{c}{\textbf{\scriptsize{}Iteration Count/Runtime (seconds)}}\tabularnewline
\hline 
{\scriptsize{}($m$,$n$)} & {\scriptsize{}($\bar \mu$,$\bar L$)} & {\scriptsize{}RPF-SFISTA} & {\scriptsize{}FISTA-BT} & {\scriptsize FISTA-R}& {\scriptsize{}RA-FISTA} & {\scriptsize{}GR-FISTA} \tabularnewline
\hline 
\hline 
{\scriptsize{(500, 1000)}} & {\scriptsize{$(10^{-4},10^2)$}} & {\scriptsize{\textbf{1035/62.34}}} & {\scriptsize{32483/1624.71}} & {\scriptsize{32483/1634.26}} & {\scriptsize{10568/443.47}} & {\scriptsize{7860/317.06}}\tabularnewline

{\scriptsize{(500, 1000)}} & {\scriptsize{$(10^{-4},10^2)$}} & \textbf{\scriptsize{2922/165.28}} & {\scriptsize{53356/2604.41}} & {\scriptsize{53356/2701.43}} & {\scriptsize{10619/441.68}} & {\scriptsize{7863/326.54}} \tabularnewline

{\scriptsize{(500, 1000)}} & {\scriptsize{$(10^{-2},10^4)$}} & \textbf{\scriptsize{607/34.03}} & {\scriptsize{8643/420.96}} & {\scriptsize{8643/436.37}} & {\scriptsize{9698/416.20}} & {\scriptsize{6820/284.80}}\tabularnewline

{\scriptsize{(500, 1000)}} & {\scriptsize{$(10^{-2},10^{4})$}} & \textbf{\scriptsize{1886/106.38}} & {\scriptsize{48990/2392.33}} & {\scriptsize{48990/2475.34}} & {\scriptsize{12016/528.72}} & {\scriptsize{6847/285.02}}\tabularnewline

{\scriptsize{(500, 1000)}} & {\scriptsize{$(10^{-3},10^{3})$}} & \textbf{\scriptsize{900/51.24}} & {\scriptsize{10023/489.64}} & {\scriptsize{10023/507.00}} & {\scriptsize{10584/461.42}} & {\scriptsize{8747/363.64}}\tabularnewline

{\scriptsize{(500, 1000)}} & {\scriptsize{$(10^{-3},10^{3})$}} & \textbf{\scriptsize{2948/166.56}} & {\scriptsize{52019/2550.12}} & {\scriptsize{52019/1444.09}} & {\scriptsize{10640/450.38}} & {\scriptsize{7901/324.36}}\tabularnewline

\hline

{\scriptsize{(1000, 2000)}} & {\scriptsize{$(10^{-2},10^{3})$}} & \textbf{\scriptsize{577/113.21}} & {\scriptsize{7507/1395.54}} & {\scriptsize{7507/1444.09}} & {\scriptsize{14361/2453.04}} & {\scriptsize{9767/1640.84}}
\tabularnewline

{\scriptsize{(1000, 2000)}} & {\scriptsize{$(10^{-2},10^{3})$}} & \textbf{\scriptsize{2030/382.98}} & {\scriptsize{*/1.37e-08}} & {\scriptsize{*/1.47e-08}} & {\scriptsize{14326/2452.20}} &{\scriptsize{9707/1633.46}}
\tabularnewline

{\scriptsize{(1000, 2000)}} & {\scriptsize{$(10^{-1},10^{4})$}} & \textbf{\scriptsize{624/113.08}} & {\scriptsize{7121/1317.06}} & {\scriptsize{7121/1367.63}} &  {\scriptsize{14363/2456.39}} & {\scriptsize{9768/1642.85}}
\tabularnewline

{\scriptsize{(1000, 2000)}} & {\scriptsize{$(10^{-1},10^{4})$}} & \textbf{\scriptsize{2054/376.27}} & {\scriptsize{*/1.91e-08}} & {\scriptsize{*/2.10e-08}} & {\scriptsize{14327/2448.34}} & {\scriptsize{9708/1634.13}}
\tabularnewline

{\scriptsize{(1000, 2000)}} & {\scriptsize{$(10^{-1},10^{5})$}} & \textbf{\scriptsize{586/105.60}} & {\scriptsize{6919/1280.26}} & {\scriptsize{6919/1327.11}} & {\scriptsize{13962/2380.55}} & {\scriptsize{9940/1674.29}}
\tabularnewline

{\scriptsize{(1000, 2000)}} & {\scriptsize{$(10^{-1},10^{5})$}} & \textbf{\scriptsize{2009/366.87}} & {\scriptsize{*/1.94e-08}} & {\scriptsize{*/1.99e-08}} & {\scriptsize{13959/2385.47}}  & {\scriptsize{9327/1572.13}}
\end{tabular}
\par\end{centering}
\caption{\scriptsize Iteration counts and runtimes (in seconds) for the Vector QP
problem with Box Constraints in \S~\ref{subsec:strconvex_qpbox}. The tolerances are set to $10^{-8}$. Entries marked with * did not converge in the time limit of $7200$ seconds. The ATR metric is 7.08.\label{tab108:qpbox}}
\end{table}

\begin{table}[!tbh]
\captionsetup{font=scriptsize}
\begin{centering}
\begin{tabular}{>{\centering}p{1.75cm}>{\centering}p{1.8cm}|>{\centering}p{1.8cm}>{\centering}p{1.8cm}>{\centering}p{1.8cm}>{\centering}p{1.8cm}>{\centering}p{1.8cm}}
\multicolumn{1}{c}{\textbf{\scriptsize{}Dimensions}} & \multicolumn{1}{c|}{\textbf{\scriptsize{}Curvatures}} & \multicolumn{5}{c}{\textbf{\scriptsize{}Iteration Count/Runtime (seconds)}}\tabularnewline
\hline 
{\scriptsize{}($m$,$n$)} & {\scriptsize{}($\bar \mu$,$\bar L$)} & {\scriptsize{}RPF-SFISTA} & {\scriptsize{}FISTA-BT} & {\scriptsize FISTA-R}& {\scriptsize{}RA-FISTA} & {\scriptsize{}GR-FISTA} \tabularnewline
\hline 
\hline 
{\scriptsize{(500, 1000)}} & {\scriptsize{$(10^{-4},10^2)$}} & {\scriptsize{\textbf{1970/90.50}}} & {\scriptsize{*/1.81e-11}} & {\scriptsize{*/3.56e-11}} & {\scriptsize{17702/741.39}} & {\scriptsize{16033/677.15}}\tabularnewline

{\scriptsize{(500, 1000)}} & {\scriptsize{$(10^{-4},10^2)$}} & \textbf{\scriptsize{6028/279.06}} & {\scriptsize{*/3.33e-10}} & {\scriptsize{*/4.60e-10}} & {\scriptsize{18304/756.50}} & {\scriptsize{15284/632.51}} \tabularnewline

{\scriptsize{(500, 1000)}} & {\scriptsize{$(10^{-2},10^4)$}} & \textbf{\scriptsize{1024/45.91}} & {\scriptsize{42950/2096.65}} & {\scriptsize{42950/2180.87}} & {\scriptsize{15453/644.29}} & {\scriptsize{12308/518.62}}\tabularnewline

{\scriptsize{(500, 1000)}} & {\scriptsize{$(10^{-2},10^{4})$}} & \textbf{\scriptsize{3074/140.47}} & {\scriptsize{*/1.32e-10}} & {\scriptsize{*/1.43e-10}} & {\scriptsize{17664/734.98}} & {\scriptsize{12472/525.10}}\tabularnewline

{\scriptsize{(500, 1000)}} & {\scriptsize{$(10^{-3},10^{3})$}} & \textbf{\scriptsize{1868/84.54}} & {\scriptsize{62858/3063.96}} & {\scriptsize{62858/3183.97}} & {\scriptsize{18291/768.65}} & {\scriptsize{16425/685.00}}\tabularnewline

{\scriptsize{(500, 1000)}} & {\scriptsize{$(10^{-3},10^{3})$}} & \textbf{\scriptsize{6038/276.19}} & {\scriptsize{*/2.34e-10}} & {\scriptsize{*/3.63e-10}} & {\scriptsize{18331/776.90}} & {\scriptsize{15297/641.28}}\tabularnewline

\hline

{\scriptsize{(1000, 2000)}} & {\scriptsize{$(10^{-2},10^{3})$}} & \textbf{\scriptsize{1132/210.81}} & {\scriptsize{*/1.06e-12}} & {\scriptsize{*/9.77e-13}} & {\scriptsize{23314/3974.38}} & {\scriptsize{19020/3237.59}}
\tabularnewline

{\scriptsize{(1000, 2000)}} & {\scriptsize{$(10^{-2},10^{3})$}} & \textbf{\scriptsize{3848/722.90}} & {\scriptsize{*/1.45e-08}} & {\scriptsize{*/1.50e-08}} & {\scriptsize{24491/4187.51}} &{\scriptsize{18788/3199.92}}
\tabularnewline

{\scriptsize{(1000, 2000)}} & {\scriptsize{$(10^{-1},10^{4})$}} & \textbf{\scriptsize{1200/222.01}} & {\scriptsize{*/1.23e-12}} & {\scriptsize{*/8.72e-13}} &  {\scriptsize{23293/3969.59}} & {\scriptsize{19033/3239.94}}
\tabularnewline

{\scriptsize{(1000, 2000)}} & {\scriptsize{$(10^{-1},10^{4})$}} & \textbf{\scriptsize{4181/786.05}} & {\scriptsize{*/2.09e-08}} & {\scriptsize{*/2.32e-08}} & {\scriptsize{24503/4173.13}} & {\scriptsize{18781/3196.47}}
\tabularnewline

{\scriptsize{(1000, 2000)}} & {\scriptsize{$(10^{-1},10^{5})$}} & \textbf{\scriptsize{1104/201.44}} & {\scriptsize{35820/6646.22}} & {\scriptsize{35820/6875.80}} & {\scriptsize{22498/3832.11}} & {\scriptsize{18868/3207.92}}
\tabularnewline

{\scriptsize{(1000, 2000)}} & {\scriptsize{$(10^{-1},10^{5})$}} & \textbf{\scriptsize{3987/739.08}} & {\scriptsize{*/1.88e-08}} & {\scriptsize{*/1.96e-08}} & {\scriptsize{22219/3782.47}}  & {\scriptsize{19405/3300.10}}
\end{tabular}
\par\end{centering}
\caption{\scriptsize Iteration counts and runtimes (in seconds) for the Vector QP
problem with Box Constraints in \S~\ref{subsec:strconvex_qpbox}. The tolerances are set to $10^{-13}$. Entries marked with * did not converge in the time limit of $7200$ seconds. The ATR metric is 7.84.\label{tab1013:qpbox}}
\end{table}

\subsection{Comments about the numerical results}
As seen from Tables~\ref{tab108:logL1}, \ref{tab1013:Lasso}, \ref{tab1013:Lasso-2}, \ref{tab108:qpsimplex}, \ref{tab1013:qpsimplex}, \ref{tab108:qpbox}, and \ref{tab1013:qpbox}, RPF-SFISTA was the most efficient method and the only method able to find a solution of the desired accuracy (either $10^{-8}$ or $10^{-13}$) within the time limit of 2 hours on every problem instance considered. Out of the 12 sparse logistic regression problems instances considered in \S~\ref{logistic regression}, both FISTA-BT and FISTA-R only finished within the time limit on 5 and 7 instances, respectively. Meanwhile, RA-FISTA and GR-FISTA finished on 4 and 7 instances, respectively. It can be seen from Table~\ref{tab108:logL1} that GR-FISTA was the second-best performing method while RPF-SFISTA was the best performing method for this problem class. As indicated by the ATR metric, RPF-SFISTA was on average over 14 times faster than GR-FISTA across all 12 instances.

Out of the 60 Lasso problem instances considered in \S~\ref{Lasso}, both FISTA-BT and FISTA-R finished within the time limit on 52 and 51 instances, respectively. RA-FISTA and GR-FISTA finished within the time limit of 2 hours on 57 and 59 instances, respectively. It can be seen from Tables~\ref{tab1013:Lasso} and \ref{tab1013:Lasso-2} that GR-FISTA was the second-best performing method while RPF-SFISTA was the best performing method for this problem class. As indicated by the ATR metrics in both tables, RPF-SFISTA was approximately 3.87 times faster than GR-FISTA on the first 30 Lasso instances considered and 6.32 times faster on the last 30 instances considered. Hence, RPF-SFISTA was roughly 5 times faster than GR-FISTA across all 60 instances considered.

For the simplex-constrained dense QP instances considered in \S~\ref{subsec:nonconvex_qpsimplex}, both FISTA-BT and FISTA-R finished with a solution of accuracy $10^{-8}$ within the time limit on 11 of the 12 instances while both RA-FISTA and GR-FISTA finished within the time limit on 6 instances. For this accuracy, FISTA-R was the second-best performing method on this problem class while RPF-SFISTA was the best performing method. As indicated by the ATR metric, RPF-SFISTA was on average over 3.3 times faster than FISTA-R across the 12 instances considered. When the desired accuracy was $10^{-13}$, both FISTA-BT and FISTA-R finished within the time limit on 7 of the 12 instances while both RA-FISTA and GR-FISTA finished on 6 instances. For this accuracy, FISTA-R was the second-best performing method while RPF-SFISTA was the best performing method. As indicated by the ATR metric, RPF-SFISTA was on average over 4.6 times faster than FISTA-R across the 12 instances, showing that the gap between RPF-SFISTA and the other codes increased when a higher accuracy of $10^{-13}$ was required. 

For the box-constrained dense QP instances considered in \S~\ref{subsec:strconvex_qpbox}, both FISTA-BT and FISTA-R finished with a solution of accuracy $10^{-8}$ within the time limit of 2 hours on 9 of the 12 instances while both RA-FISTA and GR-FISTA finished on all 12 instances. For this accuracy, GR-FISTA was the second-best performing method on this problem class while RPF-SFISTA was the best performing method. As indicated by the ATR metric, RPF-SFISTA was on average over 7.08 times faster than GR-FISTA across the 12 instances considered. When the desired accuracy was $10^{-13}$, both FISTA-BT and FISTA-R finished within the time limit on just 3 of the 12 instances while both RA-FISTA and GR-FISTA again finished on all 12 instances. GR-FISTA was the second-best performing method while RPF-SFISTA was the best performing method. As indicated by the ATR metric, RPF-SFISTA was on average over 7.8 times faster than GR-FISTA across the 12 instances.

\begin{appendices}
\section{Proof of Proposition~\ref{prop:nest_complex1}}\label{SC-FISTA Appendix Proof}
This section is dedicated to proving Proposition~\ref{prop:nest_complex1}. It is broken up into two subsections. The first subsection is dedicated to proving Proposition~\ref{prop:nest_complex1}(a)-(b) while the second one is dedicated to proving Proposition~\ref{prop:nest_complex1}(c).

\subsection{Proof of Proposition~\ref{prop:nest_complex1}(a)-(b)}\label{Part A proof}
The following lemma establishes a key bound on the distance between the iterate $\xi_j$ and the initial point $z_{l-1}$ of the $l$-th cycle.
\begin{lemma}
For every iteration index $j\geq 1$ generated during the l-th cycle of RPF-SFISTA, it holds that
\begin{equation}\label{Bounded W}
    \|\xi_j-z_{l-1}\|^2 \leq C_{\bar \mu}(z_{l-1})
\end{equation}
where $C_{\bar \mu}(\cdot)$ is as in \eqref{D0 def} and $z_{l-1}$ is the initial point of the $l$-th cycle. 
\end{lemma}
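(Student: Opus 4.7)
The plan is to exploit $\bar \mu$-strong convexity of $\phi$ together with the function-value monotonicity established in Lemma~\ref{First Lemma} to control the distance $\|\xi_j - z_{l-1}\|$ by the suboptimality gap at the cycle's starting point.

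First, since $\phi$ is $\bar \mu$-strongly convex with (unique) minimizer $z^{*}$, the standard quadratic growth bound
\[
\frac{\bar \mu}{2}\|z - z^{*}\|^{2} \leq \phi(z) - \phi(z^{*})
\]
holds for every $z \in \mathcal H$. I will apply this inequality at $z = \xi_j$ and at $z = z_{l-1}$ to obtain
\[
\|\xi_j - z^{*}\|^{2} \leq \frac{2}{\bar \mu}\bigl[\phi(\xi_j) - \phi(z^{*})\bigr], \qquad \|z_{l-1} - z^{*}\|^{2} \leq \frac{2}{\bar \mu}\bigl[\phi(z_{l-1}) - \phi(z^{*})\bigr].
\]

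Next, I will combine these two bounds via the triangle inequality and the elementary inequality $(a+b)^{2} \leq 2a^{2} + 2b^{2}$, giving
\[
\|\xi_j - z_{l-1}\|^{2} \leq 2\|\xi_j - z^{*}\|^{2} + 2\|z_{l-1} - z^{*}\|^{2} \leq \frac{4}{\bar \mu}\bigl[\phi(\xi_j) - \phi(z^{*})\bigr] + \frac{4}{\bar \mu}\bigl[\phi(z_{l-1}) - \phi(z^{*})\bigr].
\]

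Finally, I will invoke relation \eqref{function value decrease XI} from Lemma~\ref{First Lemma}, which ensures $\phi(\xi_j) \leq \phi(z_{l-1})$, to bound the first bracket by the second and conclude
\[
\|\xi_j - z_{l-1}\|^{2} \leq \frac{8}{\bar \mu}\bigl[\phi(z_{l-1}) - \phi(z^{*})\bigr] = C_{\bar \mu}(z_{l-1}),
\]
which is the desired inequality \eqref{Bounded W}. There is no real obstacle here; the proof is a short three-line calculation whose only nontrivial ingredient is the monotonicity $\phi(\xi_j) \leq \phi(z_{l-1})$ already established in Lemma~\ref{First Lemma}.
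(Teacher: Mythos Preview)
Your proposal is correct and follows essentially the same route as the paper's proof: apply the quadratic growth bound from $\bar\mu$-strong convexity at $\xi_j$ and at $z_{l-1}$, combine via the inequality $\|\xi_j - z_{l-1}\|^2 \le 2\|\xi_j - z^*\|^2 + 2\|z_{l-1} - z^*\|^2$, and then invoke $\phi(\xi_j)\le \phi(z_{l-1})$ from Lemma~\ref{First Lemma} to finish. No differences worth noting.
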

\begin{proof}
    Let $j \geq 1$ be an iteration index generated during the $l$-th cycle of RPF-SFISTA. It follows immediately from the fact that $\phi$ is $\bar \mu$-convex and relation \eqref{ineq:nu-convex} with $\Psi=\phi$ and $\nu=\bar \mu$ that the following relations hold
    \begin{equation}\label{Key Bound on X star}
    \|\xi_j-z^{*}\|^2\leq \frac{2}{\bar \mu}\left[\phi(\xi_j)-\phi(z^{*})\right], \quad \|z_{l-1}-z^{*}\|^2\leq \frac{2}{\bar \mu}\left[\phi(z_{l-1})-\phi(z^{*})\right].
    \end{equation}
The above relations, triangle inequality, and relation \eqref{function value decrease XI} then imply that
\begin{align*}
    \|\xi_j-z_{l-1}\|^2&\leq 2\|\xi_j-z^{*}\|^2+2\|z^{*}-z_{l-1}\|^2 \nonumber\\
    &\overset{\eqref{Key Bound on X star}}{\leq} \frac{4}{\bar \mu}\left[\phi(\xi_j)-\phi(z^{*})\right]+\frac{4}{\bar \mu}\left[\phi(z_{l-1})-\phi(z^{*})\right]\overset{\eqref{function value decrease XI}}{\leq} \frac{8}{\bar \mu}\left[\phi(z_{l-1})-\phi(z^{*})\right].
\end{align*}
The conclusion of the lemma then immediately follows from the above relation and the definition of $C_{\bar \mu}(\cdot)$ in \eqref{D0 def}.
\end{proof}

The proof of Proposition~\ref{prop:nest_complex1}(a)-(b) is now presented. The proof of part (a) has a similar pattern to the proof of Proposition A.5 in \cite{SujananiMonteiro}, but we include the proof here for the sake of completeness.
\begin{proof}[Proof of Proposition~\ref{prop:nest_complex1}(a)-(b)]
(a) Consider the $l$-th cycle of RPF-SFISTA and let $m$ denote the first quantity in \eqref{eq:eq1}. Using 
this definition and the
inequality $\log (1+ \alpha) \ge \alpha/(1+\alpha)$ for
any $\alpha>-1$, it can easily be seen that
\begin{equation}\label{Q bound-2}
\left(1+Q_l^{-1}\right)^{2(m-1)} \ge 
\frac{C_{\bar \mu}(z_{l-1})\zeta_l^2}{\chi \hat \epsilon^2}.
\end{equation}
We claim that the $l$-th cycle of RPF-SFISTA terminates in either step 4 or step 5 in at most $m$ iterations.
It is thus sufficient to show that
if the $l$-th cycle of RPF-SFISTA has not stopped in step 4
up to and including the
$m$-th iteration, then
it must stop successfully in step 5
at the $m$-th iteration.
So, assume that the $l$-th cycle of
RPF-SFISTA has not stopped in step 4
up to the
$m$-th iteration.
It is easy to see then in view of step~4 of RPF-SFISTA that relation
\eqref{restart condition} holds with $j=m$.

It then follows from this relation, inequality \eqref{ubound-1} with $j=m$, the fact that $x_0$ is set as $z_{l-1}$ at the beginning of the $l$-th cycle, relations \eqref{Bounded W} and \eqref{Q bound-2}, and inequality \eqref{sumAkbound} with $j=m$ that
\begin{align}\label{bound diff-1-2}
C_{\bar \mu}(z_{l-1})&\overset{\eqref{Bounded W}}{\geq}\|\xi_{m}-z_{l-1}\|^{2}=\|\xi_{m}-x_0\|^{2} \overset{\eqref{restart condition}}{\geq} \chi A_{m}L_{m} \|y_{m}-\tilde x_{m-1}\|^2
\overset{\eqref{ubound-1}}{\ge}
\frac{\chi}{\zeta_l^2}A_mL_m \|v_{m}\|^2\\
&\overset{\eqref{sumAkbound}}{\geq}\frac{\chi}{\zeta_l^2}
\left(1+Q_l^{-1} \right)^{2(m-1)}
\|v_m\|^2 \overset{\eqref{Q bound-2}}{\geq} \frac{C_{\bar \mu}(z_{l-1})}{\hat \epsilon^2}\|v_m\|^2
\end{align}
which implies that the termination criterion
\eqref{u sigma criteria} in step 5 of RPF-SFISTA is satisfied.
Hence, the $l$-th cycle of RPF-SFISTA
must
successfully stop at the
end of its $m$-th iteration and
the claim thus holds.
Moreover, it is easy to see from relation \eqref{upper bound} that the second quantity in 
\eqref{eq:eq1} is a bound
on the total number of times that step 2 of RPF-SFISTA needs to be repeated. Since every time step 2 of RPF-SFISTA is performed only one resolvent evaluation is needed, the conclusion of part (a) follows.

(b) To show part (b), assume that the $l$-th cycle of RPF-SFISTA terminates in its step 5 and outputs a quadruple $(y,v,\xi,L)$. It then follows that $(y,v,\xi,L)=(y_j,v_j,\xi_j,L_j)$ where $j$ is an iteration index generated during the $l$-th cycle of RPF-SFISTA. The inclusion in \eqref{ubound-1} and the termination criterion \eqref{u sigma criteria} in step 5 of RPF-SFISTA then immediately imply that pair $(y,v)=(y_j,v_j)$ satisfies \eqref{acg problem} and hence is an $\hat \epsilon$-optimal solution of \eqref{OptProb1}. Moreover, it follows from the fact that $L=L_{j}$ for an iteration index $j$ generated during the $l$-th cycle, both inequalities in \eqref{upper bound}, the fact that the way $\underbar M_l$ is chosen in step 0 implies that $\underbar M_l \geq \bar M_0$, and relation \eqref{Bounding Upper Curvature} that
\[\bar M_0\leq \underbar M_l\leq L \overset{\eqref{upper bound}}{\leq} \max\{\underbar M_l,\kappa \bar L\} \overset{\eqref{Bounding Upper Curvature}}{\leq} \max\left\{\bar M_0,\kappa \bar L\right\}\]
which implies that $L$ satisfies the second relation in \eqref{Func Value Decrease}. To see the first relation in \eqref{Func Value Decrease}, observe that the first conclusion of Lemma~\ref{First Lemma} and the facts that $\xi=\xi_j$ and $y=y_j$ imply that $\phi(\xi)\leq \phi(y)$. It also follows from combining relations \eqref{function value decrease XI} and \eqref{Key Func Bound} that $\phi(\xi)\leq \phi(z_0)$, which together with the above relation implies that the first relation in \eqref{Func Value Decrease} holds. Proposition~\ref{prop:nest_complex1}(b) then immediately follows from the above conclusions.\end{proof}

\subsection{Proof of Proposition~\ref{prop:nest_complex1}(c)}\label{Part B Proof}


For the remainder of this subsection, consider the $l$-th cycle of RPF-SFISTA and assume that $j\geq 1$ is an iteration index generated during the cycle and that $\mu=\mu_{l-1}$ is in the interval $(0,\bar \mu]$. 

The main novelty in the proof of Proposition~\ref{prop:nest_complex1}(c) lies in our careful construction of a sequence $\gamma_{j}$ which lower bounds the entire composite function $\phi$. This construction is thus essential to establishing that RPF-SFISTA is $\bar \mu$-universal as opposed to just $\bar \mu_f$-universal.


The proofs of Lemmas~\ref{lem:gamma-sfista} and \ref{lm:hysub-3-sfista} below follow closely to the proofs of Lemmas A.6 and A.7 in \cite{SujananiMonteiro} except for minor modifications due to our novel construction of the sequence $\gamma_j$. We include the proofs here for completeness.
The first lemma establishes key properties of the iterates of RPF-SFISTA.

\begin{lemma}\label{lem:gamma-sfista}
	For every $ j\ge 1 $ and $x \in \mathbb E$, define 
	\begin{align}
		\gamma_j(x)&:=\phi(y_{j}) + 2\left[\ell_{f}(y_{j},\tilde x_{j-1})-f(y_{j})\right] + \inner{s_{j}}{x - y_{j}}
		+ \frac{\mu}4 \|x-y_{j}\|^2, \label{def:gamma-sfista}
	\end{align}
where $\phi:=f+h$ and $s_{j}$ are as in \eqref{OptProb1} and \eqref{eq:sk}, respectively.
Then, for every $j \ge 1$, we have:
	\begin{align}
		y_{j} &= \argmin _{x}\left\{\gamma_{j}(x)+\frac{L_{j}}{2}\left\|x-\tilde{x}_{j-1}\right\|^{2}\right\};\label{eq:min-sfista}\\
		x_{j}&=\underset{x}\argmin\left\{a_{j-1} \gamma_{j}(x)+\tau_{j-1} \left\|x-x_{j-1}\right\|^{2} /2 \right\}.\label{xgamma}
		\end{align}
\end{lemma}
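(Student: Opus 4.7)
The plan is to verify both identities by computing first-order optimality conditions, exploiting that each objective in \eqref{eq:min-sfista} and \eqref{xgamma} is, viewed as a function of $x$, a strongly convex quadratic (no $h$ term appears in $\gamma_j$) and therefore has a unique unconstrained minimizer characterized by the vanishing of its gradient. The motivation for introducing $\gamma_j$ as in \eqref{def:gamma-sfista} is that, using $\bar L$-smoothness and convexity of $f$ together with the strong convexity of $\phi$, one can show that $\gamma_j(\cdot)\leq \phi(\cdot)$; but that fact is separate from the present lemma, which only concerns the identification of $y_j$ and $x_j$ as minimizers of certain regularized versions of $\gamma_j$.

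For \eqref{eq:min-sfista}, the only $x$-dependent terms in $\gamma_j(x)+\tfrac{L_j}{2}\|x-\tilde x_{j-1}\|^2$ are $\inner{s_j}{x-y_j}+\tfrac{\mu}{4}\|x-y_j\|^2+\tfrac{L_j}{2}\|x-\tilde x_{j-1}\|^2$; the remaining pieces of $\gamma_j(x)$ are constants in $x$. The gradient at a point $x$ is therefore $s_j+\tfrac{\mu}{2}(x-y_j)+L_j(x-\tilde x_{j-1})$, which at $x=y_j$ reduces to $s_j+L_j(y_j-\tilde x_{j-1})$. By the definition of $s_j$ in \eqref{eq:sk}, this quantity is zero. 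Since the objective is strongly convex (modulus $\tfrac{\mu}{2}+L_j>0$), $y_j$ is its unique global minimizer.

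For \eqref{xgamma}, an analogous computation gives the optimality condition
\begin{equation*}
a_{j-1}\Bigl[s_j+\frac{\mu}{2}(x-y_j)\Bigr]+\tau_{j-1}(x-x_{j-1})=0.
\end{equation*}
Using the recursion $\tau_j=\tau_{j-1}+\tfrac{a_{j-1}\mu}{2}$ from \eqref{eq:taunext-sfista1}, this rearranges to
\begin{equation*}
\tau_j\, x = \frac{\mu a_{j-1}}{2}y_j+\tau_{j-1}x_{j-1}-a_{j-1}s_j,
\end{equation*}
which is precisely the update rule \eqref{eq:xnext-sfista1} defining $x_j$. Strong convexity of the objective (modulus $\tau_j>0$) then identifies $x_j$ as its unique minimizer.

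Both parts are thus routine verifications and there is no substantive obstacle; the only care required is bookkeeping to ensure the coefficients arising from $\nabla\gamma_j$ line up with those in the update rules for $s_j$, $\tau_j$, and $x_j$. The content of the lemma is really that the quantities $s_j$, $\tau_j$, and $x_j$ have been defined in steps 2 and 3 of RPF-SFISTA precisely so that these two variational characterizations hold.
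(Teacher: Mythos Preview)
Your proof is correct and follows essentially the same approach as the paper: both arguments verify the first-order optimality conditions for strongly convex quadratics, using $\nabla\gamma_j(y_j)=s_j$ together with the definition \eqref{eq:sk} for \eqref{eq:min-sfista}, and the recursion \eqref{eq:taunext-sfista1} together with the update \eqref{eq:xnext-sfista1} for \eqref{xgamma}. The only difference is presentational---the paper plugs $x_j$ directly into the gradient and checks it vanishes, while you solve the optimality equation for $x$ and recognize the result as $x_j$---but the content is identical.
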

\begin{proof}
It follows from the definition of $s_j$ in \eqref{eq:sk} and the fact that $\nabla \gamma_j(y_{j})=s_{j}$ that $y_{j}$ satisfies the optimality condition for \eqref{eq:min-sfista}. This observation implies that relation \eqref{eq:min-sfista} must hold. Moreover, it follows from relations \eqref{eq:taunext-sfista1} and \eqref{eq:xnext-sfista1} that
\begin{align*}
 a_{j-1} \nabla \gamma_j(x_{j})+\tau_{j-1}(x_{j}-x_{j-1})&=a_{j-1}s_{j}+\frac{a_{j-1}\mu}{2}(x_{j}-y_{j})+\tau_{j-1}(x_{j}-x_{j-1})\\
 &\overset{\eqref{eq:taunext-sfista1}}{=} a_{j-1} s_{j} -\frac{\mu a_{j-1}}{2}y_{j}-\tau_{j-1}x_{j-1}+ \tau_{j}x_{j}
 \overset{\eqref{eq:xnext-sfista1}}{=}0
\end{align*}
which implies that relation \eqref{xgamma} holds. 
\end{proof}

\vgap

\begin{lemma} \label{lm:hysub-3-sfista}
	For every $j\ge 1$ and $x \in \mathbb E$, we have
	\begin{align}\label{ineq:recur}
	A_{j-1}\gamma_j(y_{j-1}) &+ a_{j-1}\gamma_j(x) + \frac{\tau_{j-1}}2 \|x_{j-1} - x\|^2 - \frac{\tau_{j}}2 \|x_{j} - x\|^2 \nonumber \\
	&\ge A_{j}\phi(y_{j})+\frac{\chi A_{j} L_{j}}{2}\|y_{j}-\tilde x_{j-1}\|^2.  
	\end{align}
\end{lemma}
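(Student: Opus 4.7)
The plan is to combine the three characterizations of Lemma~\ref{lem:gamma-sfista} with the backtracking inequality \eqref{ineq check} and the algebraic identities in \eqref{tauproperty} to reassemble the left-hand side of \eqref{ineq:recur} into something centered on $\phi(y_j)$. The overall structure is a three-step telescoping: first eliminate the free variable $x$ using the $x_j$-minimization property; then consolidate the resulting $\gamma_j$ terms at an intermediate averaged point via Jensen's inequality and the $y_j$-minimization property; and finally convert $\gamma_j(y_j)$ into $\phi(y_j)$ using backtracking.

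For Step 1, note that $\gamma_j$ is a quadratic of the form ``affine $+ (\mu/4)\|x-y_j\|^2$'', hence $\mu/2$-strongly convex, so $x \mapsto a_{j-1}\gamma_j(x) + (\tau_{j-1}/2)\|x-x_{j-1}\|^2$ is $\tau_j$-strongly convex by the update $\tau_j = \tau_{j-1} + a_{j-1}\mu/2$ in \eqref{eq:taunext-sfista1}, and is minimized at $x_j$ by \eqref{xgamma}. The strong-convexity inequality applied at the test point $x$ gives
\begin{equation*}
a_{j-1}\gamma_j(x) + \frac{\tau_{j-1}}{2}\|x - x_{j-1}\|^2 - \frac{\tau_j}{2}\|x - x_j\|^2 \ge a_{j-1}\gamma_j(x_j) + \frac{\tau_{j-1}}{2}\|x_j - x_{j-1}\|^2,
\end{equation*}
reducing the goal to a lower bound on $A_{j-1}\gamma_j(y_{j-1}) + a_{j-1}\gamma_j(x_j) + (\tau_{j-1}/2)\|x_j - x_{j-1}\|^2$.

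For Step 2, I would set $\hat x := (A_{j-1}y_{j-1} + a_{j-1}x_j)/A_j$ and use convexity of $\gamma_j$ to obtain $A_{j-1}\gamma_j(y_{j-1}) + a_{j-1}\gamma_j(x_j) \ge A_j\gamma_j(\hat x)$; then use that $y_j$ minimizes $\gamma_j(\cdot) + (L_j/2)\|\cdot - \tilde x_{j-1}\|^2$, via \eqref{eq:min-sfista}, to get $\gamma_j(\hat x) \ge \gamma_j(y_j) + (L_j/2)\|y_j - \tilde x_{j-1}\|^2 - (L_j/2)\|\hat x - \tilde x_{j-1}\|^2$. The key algebraic observation is that $\hat x - \tilde x_{j-1} = (a_{j-1}/A_j)(x_j - x_{j-1})$ by the definition of $\tilde x_{j-1}$ in \eqref{def:ak-sfista1}, so by the identity $\tau_{j-1}A_j = L_j a_{j-1}^2$ from \eqref{tauproperty},
\begin{equation*}
A_j \cdot \frac{L_j}{2}\|\hat x - \tilde x_{j-1}\|^2 = \frac{L_j a_{j-1}^2}{2A_j}\|x_j - x_{j-1}\|^2 = \frac{\tau_{j-1}}{2}\|x_j - x_{j-1}\|^2,
\end{equation*}
which exactly cancels the $(\tau_{j-1}/2)\|x_j - x_{j-1}\|^2$ term left from Step 1. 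The problem now reduces to showing $\gamma_j(y_j) + (L_j/2)\|y_j - \tilde x_{j-1}\|^2 \ge \phi(y_j) + (\chi L_j/2)\|y_j - \tilde x_{j-1}\|^2$.

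For Step 3, I would substitute $\gamma_j(y_j) = \phi(y_j) + 2[\ell_f(y_j;\tilde x_{j-1}) - f(y_j)]$ from \eqref{def:gamma-sfista} and apply the backtracking inequality \eqref{ineq check} in the form $\ell_f(y_j;\tilde x_{j-1}) - f(y_j) \ge -((1-\chi)L_j/4)\|y_j - \tilde x_{j-1}\|^2$, which after adding $(L_j/2)\|y_j - \tilde x_{j-1}\|^2$ to both sides produces exactly the required $\chi L_j/2$ coefficient; multiplying by $A_j$ then closes the loop. The main obstacle is really a bookkeeping alignment rather than any single inequality: the coefficients $2$ on $[\ell_f - f]$ and $\mu/4$ on $\|x - y_j\|^2$ in \eqref{def:gamma-sfista}, together with the identity $\tau_{j-1}A_j = L_j a_{j-1}^2$ and the update $\tau_j = \tau_{j-1} + a_{j-1}\mu/2$, are calibrated precisely so that the two cancellations above (of $\|x_j - x_{j-1}\|^2$ in Step 2 and of the $(1-\chi)$ versus $1$ coefficients in Step 3) combine to yield exactly $\chi A_j L_j/2$ on the right. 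Spotting the averaging point $\hat x$ and verifying this alignment is the subtle part; once set up, each individual inequality is routine.
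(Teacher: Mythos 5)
Your proposal is correct and follows essentially the same route as the paper's proof: the same strong-convexity argument at $x_j$ via \eqref{xgamma} and $\tau_j=\tau_{j-1}+a_{j-1}\mu/2$, the same convex-combination point $(A_{j-1}y_{j-1}+a_{j-1}x_j)/A_j$ combined with the identity $\tau_{j-1}A_j=L_ja_{j-1}^2$ and the minimization property \eqref{eq:min-sfista}, and the same final substitution of \eqref{def:gamma-sfista} with the backtracking inequality \eqref{ineq check}. The only cosmetic difference is that you cancel the $\frac{\tau_{j-1}}{2}\|x_j-x_{j-1}\|^2$ term explicitly, whereas the paper absorbs it by passing to a minimum over $x$; the two computations are equivalent.
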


\begin{proof}
Relation \eqref{xgamma}, the second relation in \eqref{eq:taunext-sfista1}, the fact that $\Psi_j:=a_{j-1}\gamma_j(\cdot)+\tau_{j-1} \|\cdot-x_{j-1}\|^2/2$ is  $(\tau_{j-1}+\mu a_{j-1}/2)$-convex, and relation
\eqref{ineq:nu-convex} with $\Psi=\Psi_j$ and $\nu=\tau_{j}$ imply that 
\begin{align}\label{First Relation}
a_{j-1}\gamma_j(x) + \frac{\tau_{j-1}}2 \|x-x_{j-1}\|^2 - \frac{\tau_{j}}2 \|x-x_{j}\|^2 \ge a_{j-1}\gamma_j(x_{j}) + \frac{\tau_{j-1}}2 \|x_{j}-x_{j-1}\|^2 \quad
\forall x \in \mathbb E.
\end{align}
	It then follows from the convexity of $\gamma_j$, the definitions of $\tx_{j-1}$ and $A_{j}$ in \eqref{def:ak-sfista1} and \eqref{eq:taunext-sfista1}, respectively, and the second equality in \eqref{tauproperty}, that
	\begin{align}\label{Second Relation}
		A_{j-1}\gamma_j(y_{j-1}) &+ a_{j-1}\gamma_j(x_{j}) + \frac{\tau_{j-1}}2 \|x_{j}-x_{j-1}\|^2 \nonumber\\
		&\ge A_{j} \gamma_j\left( \frac{A_{j-1}y_{j-1}+a_{j-1}x_{j}}{A_{j}} \right) + \frac{\tau_{j-1}A^2_{j}}{2a_{j-1}^2}\left\| \frac{A_{j-1}y_{j-1}+a_{j-1}x_{j}}{A_{j}}- \frac{A_{j-1}y_{j-1}+a_{j-1}x_{j-1}}{A_{j}} \right\| ^2 \nonumber\\
	&\overset{\eqref{def:ak-sfista1}}{\geq} A_{j} \min_{x}\left[ \gamma_j\left( x \right) + \frac{\tau_{j-1}A_{j}}{2a_{j-1}^2} \left\| x-\tx_{j-1}\right\| ^2\right] \nonumber \\
		&\overset{\eqref{tauproperty}}{=} A_{j}\min_{x}\left\lbrace \gamma_j(x) + \frac{L_{j}}{2}\|x-\tx_{j-1}\|^2\right\rbrace \nonumber\\
		&\overset{\eqref{eq:min-sfista}}{=} A_{j}\left[ \gamma_j(y_{j}) + \frac{L_{j}}{2}\|y_{j}-\tx_{j-1}\|^2\right] \nonumber\\
		&\overset{\eqref{def:gamma-sfista}}{=} A_{j}\left[ \phi(y_{j}) + 2 [\ell_{f}(y_{j},\tilde x_{j-1})-f(y_{j}) ] +
	\frac{L_{j}}{2}\|y_{j}-\tx_{j-1}\|^2 \right]  \nonumber\\
		&\overset{\eqref{ineq check}}{\geq} A_{j}\left[ \phi(y_{j})  +
		\frac{\chi L_{j}}{2}\|y_{j}-\tx_{j-1}\|^2 \right].
	\end{align}
Combining relations \eqref{First Relation} and \eqref{Second Relation} then immediately implies the conclusion of the lemma.
	\end{proof}
	
We now present a technical lemma that is important for proving the lemma that directly follows it. The proof can be found in \cite{LiangMonteiro-DAIPPM18}.
\begin{lemma}\label{inclusion}
	Assume that $\varphi$ is a $\xi$-strongly convex function and let
	$(y,\eta) \in \mathbb E \times \R$ be such that $
	0\in \partial_\eta \varphi(y)$.
	Then, 
	\[
	0 \in \partial_{2\eta}\left(  \varphi(\cdot) - \frac{\xi} 4 \|\cdot-y\|^2 \right) (y).
	\]
\end{lemma}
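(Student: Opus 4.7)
The plan is to prove Lemma~\ref{inclusion} by marrying the strong-convexity inequality of $\varphi$ with the defining inequality of the $\eta$-subdifferential, then optimizing a convex-combination parameter. First I would unwind the definitions in \eqref{def:epsSubdiff}: the hypothesis $0 \in \partial_\eta \varphi(y)$ says that $\varphi(x) \ge \varphi(y) - \eta$ for every $x \in \mathbb E$, while the conclusion $0 \in \partial_{2\eta}\bigl(\varphi(\cdot) - \tfrac{\xi}{4}\|\cdot - y\|^2\bigr)(y)$ is equivalent to the strengthened inequality
\[
\varphi(x) \;\ge\; \varphi(y) + \frac{\xi}{4}\|x - y\|^2 - 2\eta, \qquad \forall x \in \mathbb E.
\]

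Second, I would slide along the segment from $y$ to $x$. Fix $x \in \mathbb E$ and $\lambda \in (0,1]$ and set $z_\lambda := (1-\lambda)y + \lambda x$. The $\xi$-strong convexity of $\varphi$ yields
\[
\varphi(z_\lambda) \;\le\; (1-\lambda)\varphi(y) + \lambda \varphi(x) - \frac{\xi\lambda(1-\lambda)}{2}\|x-y\|^2,
\]
while the hypothesis gives $\varphi(z_\lambda) \ge \varphi(y) - \eta$. Combining the two and dividing by $\lambda > 0$ produces the family of lower bounds
\[
\varphi(x) \;\ge\; \varphi(y) - \frac{\eta}{\lambda} + \frac{\xi(1-\lambda)}{2}\|x-y\|^2, \qquad \forall \lambda \in (0,1].
\]

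Third, I would optimize this family in $\lambda$. Writing $c := \|x-y\|^2$, the trade-off between $-\eta/\lambda$ and $-\xi\lambda c/2$ is balanced at $\lambda_\star := \sqrt{2\eta/(\xi c)}$. Two regimes arise. When $c \ge 2\eta/\xi$, the choice $\lambda = \lambda_\star$ is admissible (lies in $(0,1]$) and substitution gives $\varphi(x) \ge \varphi(y) + \xi c/2 - \sqrt{2\xi c\eta}$; the target inequality then reduces to $\xi c/4 + 2\eta \ge \sqrt{2\xi c \eta}$, which is AM--GM applied to $\xi c/4$ and $2\eta$. When instead $c < 2\eta/\xi$, one has $\xi c/4 - 2\eta < -\eta$, so the trivial consequence $\varphi(x) \ge \varphi(y) - \eta$ already exceeds the target.

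The only delicate point is justifying $\lambda_\star \in (0,1]$, which is precisely what the case split $c \lessgtr 2\eta/\xi$ handles; once that is done, each regime collapses to a one-line AM--GM or trivial comparison. I anticipate no deeper obstacle: the constants $1/4$ in the quadratic gain and $2\eta$ in the slack are exactly what the AM--GM optimization above produces, so the statement is tight for this method of proof.
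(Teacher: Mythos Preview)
Your argument is correct. The interpolation along $z_\lambda = (1-\lambda)y + \lambda x$, combining the strong-convexity inequality with the $\eta$-approximate minimality of $y$ and then optimizing over $\lambda$, yields exactly the desired bound; the case split on $c \gtrless 2\eta/\xi$ cleanly handles admissibility of $\lambda_\star$, and the AM--GM step is precisely what pins down the constants $\xi/4$ and $2\eta$. One cosmetic caveat: when $\eta=0$ your formula gives $\lambda_\star=0\notin(0,1]$, but then $y$ is the exact minimizer of $\varphi$ and the conclusion follows directly from \eqref{ineq:nu-convex} (or, equivalently, from letting $\lambda\to 0^+$ in your family of bounds).

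Regarding comparison with the paper: the paper does not prove this lemma inline at all---it simply cites \cite{LiangMonteiro-DAIPPM18} and moves on. Your self-contained argument is therefore an addition rather than a variant of anything presented here, and its elementary nature (strong convexity plus a one-parameter optimization) makes it a reasonable substitute for chasing the external reference.
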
	

The following lemma establishes that the estimate sequence $\gamma_j$ constructed in \eqref{lem:gamma-sfista} lower bounds $\phi$.
\begin{lemma} \label{lm:gammakphi}
For every $j \ge 1$, we have that
$\gamma_j \leq \phi$. 
\end{lemma}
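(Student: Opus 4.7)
The plan is to establish a pointwise bound $\gamma_j(x) \le \phi(x)$ for all $x \in \mathbb E$ by exploiting the optimality condition for $y_j$ in \eqref{eq:ynext-sfista1}, the convexity of $f$ (to produce an approximate subgradient at $y_j$), and the $\bar\mu$-strong convexity of $\phi$, via Lemma~\ref{inclusion}. Throughout I use the standing assumption $\mu = \mu_{l-1} \in (0,\bar\mu]$.

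First I would write the optimality condition for $y_j$ in \eqref{eq:ynext-sfista1}: the first-order condition for $y_j$ minimizing $q_{j-1}(\cdot;\tilde x_{j-1},L_j)$ is
\[
0 \in \nabla f(\tilde x_{j-1}) + L_j(y_j - \tilde x_{j-1}) + \partial h(y_j).
\]
Using the definition $s_j = L_j(\tilde x_{j-1}-y_j)$ in \eqref{eq:sk}, this rearranges to $s_j - \nabla f(\tilde x_{j-1}) \in \partial h(y_j)$. Next, set
\[
\eta := f(y_j) - \ell_f(y_j;\tilde x_{j-1}) \ge 0,
\]
where nonnegativity follows from convexity of $f$. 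By the convexity of $f$ at $\tilde x_{j-1}$, for any $x$,
\[
f(x) \ge \ell_f(x;\tilde x_{j-1}) = \ell_f(y_j;\tilde x_{j-1}) + \inner{\nabla f(\tilde x_{j-1})}{x-y_j} = f(y_j) - \eta + \inner{\nabla f(\tilde x_{j-1})}{x-y_j},
\]
which means $\nabla f(\tilde x_{j-1}) \in \partial_\eta f(y_j)$.

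Adding this to the subgradient inequality $h(x) \ge h(y_j) + \inner{s_j - \nabla f(\tilde x_{j-1})}{x-y_j}$ for $h$ at $y_j$ yields
\[
\phi(x) \ge \phi(y_j) - \eta + \inner{s_j}{x-y_j} \qquad \forall x \in \mathbb E,
\]
i.e., $s_j \in \partial_\eta \phi(y_j)$, or equivalently $0 \in \partial_\eta \varphi(y_j)$ where $\varphi(x) := \phi(x) - \inner{s_j}{x-y_j}$. Since $\phi$ is $\bar\mu$-strongly convex and $\mu \le \bar\mu$, $\varphi$ is $\mu$-strongly convex.

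Finally, I would invoke Lemma~\ref{inclusion} with $\varphi$ as above, $\xi=\mu$, $y=y_j$, to conclude
\[
0 \in \partial_{2\eta}\!\left(\varphi(\cdot) - \tfrac{\mu}{4}\|\cdot-y_j\|^2\right)(y_j),
\]
which translates into the inequality
\[
\phi(x) - \inner{s_j}{x-y_j} - \tfrac{\mu}{4}\|x-y_j\|^2 \ge \phi(y_j) - 2\eta \qquad \forall x \in \mathbb E.
\]
Substituting $-2\eta = 2[\ell_f(y_j;\tilde x_{j-1}) - f(y_j)]$ and comparing with \eqref{def:gamma-sfista} produces $\phi(x) \ge \gamma_j(x)$, as required. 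The main subtlety (and the reason the paper emphasizes the novelty of this construction) is that the lower bound has to be established against the full composite $\phi$ using the $\bar\mu$-strong convexity of $\phi$ rather than of $f$ alone; everything else is standard manipulation with $\varepsilon$-subdifferentials.
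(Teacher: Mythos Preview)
Your proof is correct and follows essentially the same approach as the paper's: both establish $s_j \in \partial_\eta \phi(y_j)$ with $\eta = f(y_j) - \ell_f(y_j;\tilde x_{j-1})$ and then apply Lemma~\ref{inclusion} with $\varphi(\cdot) = \phi(\cdot) - \inner{s_j}{\cdot - y_j}$ and $\xi = \mu$ to upgrade the affine minorant to a quadratic one. The only cosmetic difference is that the paper packages the first step via the auxiliary function $\tilde\gamma_j(x) := \ell_f(x;\tilde x_{j-1}) + h(x)$ (noting $s_j \in \partial\tilde\gamma_j(y_j)$ and $\tilde\gamma_j \le \phi$), whereas you split the $f$- and $h$-parts explicitly; the resulting $\eta$-subgradient inclusion and the rest of the argument are identical.
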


\begin{proof}
For every $j \geq 1$, define
    \begin{equation}\tilde \gamma_j(x):= \ell_{f}(x;\tilde x_{j-1})  + h(x) \label{def:tgamma-sfista}.
    \end{equation}
The fact that $f$ is convex implies that $\tilde \gamma_j \leq \phi$. It follows from the definition of $y_{j}$ in \eqref{eq:ynext-sfista1} that
    \begin{equation}
		y_{j}= \argmin_{x}\left\{\tilde{\gamma}_{j}(x)+\frac{L_{j}}{2}\left\|x-\tilde{x}_{j-1}\right\|^{2}\right\} \label{eq:min-sfista'}.
		\end{equation}
Now, it is easy to see from definition of $s_{j}$ in \eqref{eq:sk} and relation \eqref{eq:min-sfista'} that $s_{j} \in \partial \tilde \gamma_j(y_{j})$. Thus, by the subgradient inequality and the fact that $\tilde \gamma_j(x) \leq \phi(x)$, we have that for all $x \in \mathbb E$:
    \[\phi(x)\geq \tilde \gamma_j(x) \geq \tilde \gamma_j(y_{j})+\inner{s_{j}}{x-y_{j}}=\phi(y_{j})+\inner{s_{j}}{x-y_{j}}-\eta_{j},        \]
    where $\eta_{j}:=\phi(y_{j})-\tilde \gamma(y_{j})$. Hence, by the definition of $\eta_{j}$-subdifferential, it follows that $s_{j}\in \partial_{\eta_{j}}\phi(y_{j})$.
    Now, note that $\phi$ is a $\mu$-strongly convex function since $\mu=\mu_{l-1}$ is assumed to be in the interval $(0,\bar \mu]$. Thus, by Lemma~\ref{inclusion} with $\xi=\mu$ and $\varphi(\cdot)=\phi(\cdot)-\inner{s_{j}}{\cdot-y_{j}}$,  we have 
    \begin{equation}\label{2 eta sub}
    s_{j} \in \partial_{2\eta_{j}}\left(\phi(\cdot)-\frac{\mu}{4}\|\cdot-y_{j}\|^2\right)(y_{j}).\\
    \end{equation}
 Hence, it follows from the definition of $\eta_{j}$ above, relation \eqref{2 eta sub}, the fact that $h=\phi-f$, and the definitions of $\tilde \gamma_j(\cdot)$ and $\gamma_j(\cdot)$ in \eqref{def:tgamma-sfista} and \eqref{def:gamma-sfista}, respectively that
    \begin{align*}
    \phi(x)&\overset{\eqref{2 eta sub}}{\geq}\phi(y_{j})+\inner{s_{j}}{x-y_{j}}+\frac{\mu}{4}\|x-y_{j}\|^2-2\eta_{j}\\
    &=2\tilde \gamma_j(y_{j})-\phi(y_{j}) + \inner{s_{j}}{x - y_{j}}
		+ \frac{\mu}4 \|x-y_{j}\|^2\\
  &\overset{\eqref{def:tgamma-sfista}}{=}2\ell_{f}(y_{j};\tilde x_{j-1}) + 2h(y_{j})-\phi(y_{j}) + \inner{s_{j}}{x - y_{j}}
		+ \frac{\mu}4 \|x-y_{j}\|^2\\
  &=2\ell_{f}(y_{j};\tilde x_{j-1}) + 2[\phi(y_{j})-f(y_{j})]-\phi(y_{j}) + \inner{s_{j}}{x - y_{j}}
		+ \frac{\mu}4 \|x-y_{j}\|^2
  \overset{\eqref{def:gamma-sfista}}{=}\gamma_j(x),
    \end{align*}
from which statement of the lemma immediately follows.     
\end{proof}

\begin{lemma} \label{lm:hysub-4-fista}
For every $j \ge 1$ and $x \in \mathcal H$, we have
\[
\sigma_{j-1}(x) - \sigma_{j} (x) \ge   \frac{\chi A_{j}L_{j}}{2} \|y_{j} - \tx_{j-1} \|^2
\]
where
\[
\sigma_j(x) := A_j [ \phi(y_j) - \phi(x) ] + \frac{\tau_j}{2} \|x-x_j\|^2.
\]
\end{lemma}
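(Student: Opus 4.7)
The plan is to combine Lemma~\ref{lm:hysub-3-sfista} with the lower-bounding property established in Lemma~\ref{lm:gammakphi} to reduce an estimate-sequence inequality about $\gamma_j$ into one about $\phi$, which can then be rearranged directly into the statement about $\sigma_{j-1}(x)-\sigma_j(x)$.

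First, I would start from the conclusion of Lemma~\ref{lm:hysub-3-sfista}:
\[
A_{j-1}\gamma_j(y_{j-1}) + a_{j-1}\gamma_j(x) + \frac{\tau_{j-1}}{2}\|x_{j-1}-x\|^2 - \frac{\tau_j}{2}\|x_j-x\|^2 \ge A_j\phi(y_j)+\frac{\chi A_j L_j}{2}\|y_j-\tilde x_{j-1}\|^2.
\]
Next, I would apply Lemma~\ref{lm:gammakphi} with the points $y_{j-1}$ and $x$ to obtain $\gamma_j(y_{j-1}) \le \phi(y_{j-1})$ and $\gamma_j(x) \le \phi(x)$. Substituting these two bounds into the left-hand side only weakens the inequality, so it remains valid with $\phi$ in place of $\gamma_j$:
\[
A_{j-1}\phi(y_{j-1}) + a_{j-1}\phi(x) + \frac{\tau_{j-1}}{2}\|x_{j-1}-x\|^2 - \frac{\tau_j}{2}\|x_j-x\|^2 \ge A_j\phi(y_j) + \frac{\chi A_j L_j}{2}\|y_j-\tilde x_{j-1}\|^2.
\]

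Finally, I would use the recurrence $A_j = A_{j-1}+a_{j-1}$ from \eqref{eq:taunext-sfista1} to write $a_{j-1}\phi(x) = A_j\phi(x)-A_{j-1}\phi(x)$, rearrange the previous display so that all $\phi(x)$ terms appear with the correct sign, and recognize the resulting left-hand side as precisely
\[
A_{j-1}[\phi(y_{j-1})-\phi(x)] + \frac{\tau_{j-1}}{2}\|x-x_{j-1}\|^2 - A_j[\phi(y_j)-\phi(x)] - \frac{\tau_j}{2}\|x-x_j\|^2 = \sigma_{j-1}(x)-\sigma_j(x),
\]
which yields the claim. There is no real obstacle here; the only care required is bookkeeping the $A_{j-1}$ vs.\ $A_j$ identity and correctly splitting the $a_{j-1}\phi(x)$ term to reconstruct the telescoping form of $\sigma_j$.
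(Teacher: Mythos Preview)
Your proposal is correct and follows essentially the same approach as the paper: both start from Lemma~\ref{lm:hysub-3-sfista}, invoke Lemma~\ref{lm:gammakphi} to replace $\gamma_j$ by $\phi$ on the left-hand side, and then use $A_j=A_{j-1}+a_{j-1}$ to rearrange into the telescoping $\sigma$-form. The only cosmetic difference is that the paper phrases the final step as ``subtract $A_j\phi(x)$ from both sides,'' which is equivalent to your splitting of $a_{j-1}\phi(x)$.
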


\begin{proof}
Using Lemma \ref{lm:hysub-3-sfista} and Lemma~\ref{lm:gammakphi}  we have
\begin{align*}
A_{j-1} \phi (y_{j-1}) + & a_{j-1}\phi (x) + \frac{\tau_{j-1}}2 \|x_{j-1} - x\|^2 - \frac{\tau_{j}}2 \|x_{j} - x\|^2 \\
	& \ge  A_{j} \phi(y_{j}) + \frac{\chi A_{j}L_{j}}{2} \| y_{j} - \tx_{j-1} \|^2 .
\end{align*}
The conclusion of the lemma now follows by subtracting $A_{j} \phi(x)$ from both sides of the above inequality, and using
the first equality in \eqref{eq:taunext-sfista1} and the definition of $\sigma_j(x)$.
\end{proof}



The following result is important for proving Proposition~\ref{prop:nest_complex1}(c)
\begin{lemma} \label{lm:hysub-5-ac}
For every $j \ge 2$ and $x \in \mathcal H$, it holds that
\begin{align}\label{key convergence inequality 2}
 A_{j-1} [\phi(\xi_{j-1}) - \phi(x) ] + \frac{\tau_{j-1}}2 \|x-x_{j-1}\|^2 \le
 \frac12 \|x-x_0\|^2 - \frac{\chi}{2} \sum_{i=1}^{j-1} A_{i}L_{i} \| y_{i} - \tx_{i-1} \|^2.
\end{align}
\end{lemma}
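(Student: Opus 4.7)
The plan is to derive \eqref{key convergence inequality 2} by telescoping the one-step decrease inequality of Lemma~\ref{lm:hysub-4-fista} and then using Lemma~\ref{First Lemma} to replace $\phi(y_{j-1})$ by the (smaller) value $\phi(\xi_{j-1})$.

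First I would apply Lemma~\ref{lm:hysub-4-fista} at indices $i = 1, \ldots, j-1$ and sum the resulting inequalities. This telescopes and gives
\[
\sigma_0(x) - \sigma_{j-1}(x) \;\ge\; \frac{\chi}{2}\sum_{i=1}^{j-1} A_i L_i \,\|y_i - \tilde x_{i-1}\|^2.
\]
Then I would compute $\sigma_0(x)$ using the definition of $\sigma_j(x)$ in Lemma~\ref{lm:hysub-4-fista} together with the initializations $A_0 = 0$ and $\tau_0 = 1$ specified in step~0 of RPF-SFISTA, together with the fact that $x_0$ (at the start of the current cycle) is the initial point used in the bound. This yields $\sigma_0(x) = \tfrac12 \|x - x_0\|^2$. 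Rearranging gives
\[
\sigma_{j-1}(x) \;\le\; \tfrac12 \|x - x_0\|^2 - \frac{\chi}{2}\sum_{i=1}^{j-1} A_i L_i \,\|y_i - \tilde x_{i-1}\|^2.
\]

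Second, I would unpack the definition of $\sigma_{j-1}(x)$ to rewrite the left-hand side as $A_{j-1}[\phi(y_{j-1}) - \phi(x)] + \tfrac{\tau_{j-1}}{2}\|x - x_{j-1}\|^2$. To finish, I would invoke Lemma~\ref{First Lemma}, which gives $\phi(\xi_{j-1}) \le \phi(y_{j-1})$, so that $A_{j-1}[\phi(\xi_{j-1}) - \phi(x)] \le A_{j-1}[\phi(y_{j-1}) - \phi(x)]$ (this uses $A_{j-1}\ge 0$, which is clear from the recursion in \eqref{eq:taunext-sfista1} since $a_{j-1} \ge 0$). Substituting this bound into the previous display yields exactly \eqref{key convergence inequality 2}.

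There is no real obstacle here; the only subtlety is checking that the initialization of $(A_0, \tau_0, x_0)$ in step~0 of the current cycle makes $\sigma_0(x) = \tfrac12\|x-x_0\|^2$ and that Lemma~\ref{First Lemma} is applicable at index $j-1 \ge 1$ (which is why the statement requires $j \ge 2$). Both are immediate from the algorithm's description and from the earlier lemmas.
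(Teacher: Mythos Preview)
Your proposal is correct and follows essentially the same approach as the paper's proof: sum the one-step inequality of Lemma~\ref{lm:hysub-4-fista} over $i=1,\ldots,j-1$, use the initialization $(A_0,\tau_0)=(0,1)$ to evaluate $\sigma_0(x)=\tfrac12\|x-x_0\|^2$, and then invoke Lemma~\ref{First Lemma} to pass from $\phi(y_{j-1})$ to $\phi(\xi_{j-1})$. Your explicit remark that $A_{j-1}\ge 0$ is needed for this last substitution is a nice clarification that the paper leaves implicit.
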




\begin{proof}
It follows from summing the inequality of Lemma \ref{lm:hysub-4-fista} from $j=1$ to $j=j-1$, using the facts that $A_0=0$ and $\tau_0=1$, and using the definition of $\sigma_j(\cdot)$ in Lemma~\ref{lm:hysub-4-fista} that
\begin{align*}
A_{j-1} [\phi(y_{j-1}) - \phi(x) ] + \frac{\tau_{j-1}}2 \|x-x_{j-1}\|^2 \le
 \frac12 \|x-x_0\|^2 - \frac{\chi}{2} \sum_{i=1}^{j-1} A_{i}L_{i} \| y_{i} - \tx_{i-1} \|^2.
\end{align*}
Relation \eqref{key convergence inequality 2} then immediately follows from the above relation and the fact that the first conclusion of Lemma~\ref{First Lemma} implies that $\phi(\xi_{j-1})\leq \phi(y_{j-1})$.
\end{proof}

We are now ready to prove Proposition~\ref{prop:nest_complex1}(c).
\begin{proof}[Proof of Proposition~\ref{prop:nest_complex1}(c)]
Consider the $l$-th cycle of RPF-SFISTA and assume that it is performed with $\mu_{l-1} \in (0,\bar \mu]$. Using relation \eqref{key convergence inequality 2} with $x=\xi_{j-1}$, it follows that
\[
\|\xi_{j-1}-x_0\|^2 \overset{\eqref{key convergence inequality 2}}{\geq}\chi \sum_{i=1}^{j-1}A_{i}L_{i} \|y_{i}-\tilde x_{i-1}\|^2 \geq \chi A_{j-1}L_{j-1}\|y_{j-1}-\tilde x_{j-2}\|^2.
\]
It then follows from the above relation that for any iteration index $j\geq 1$ generated during the $l$-th cycle, it holds that
\begin{equation}\label{bound diff-3}
\|\xi_{j}-x_0\|^2 \geq \chi A_{j}L_{j}\|y_{j}-\tilde x_{j-1}\|^2.
\end{equation}
Hence, relation \eqref{bound diff-3} implies that the inequality \eqref{restart condition} checked in step 4 of RPF-SFISTA always holds for every iteration index $j\geq 1$ generated during the $l$-th cycle
and hence the $l$-th cycle of RPF-SFISTA never terminates in step 4. This observation together with Proposition~\ref{prop:nest_complex1}(a)-(b) then immediately imply that the $l$-th cycle must terminate successfully in its step 5 with a quadruple $(y,v,\xi,L)$ that satisfies \eqref{Func Value Decrease} and such that $(y,v)$ is an $\hat \epsilon$-optimal solution of \eqref{OptProb1} in at most \eqref{eq:eq1} ACG iterations/resolvent evaluations.
\end{proof}

\end{appendices}

\scriptsize

\typeout{}
\bibliographystyle{plain}
\bibliography{Proxacc_ref}

\end{document}